\tikzstyle{terminator} = [rectangle, draw, text centered, rounded corners, minimum height=2em]
\def\namedlabel#1#2{\begingroup
   \def\@currentlabel{#2}%
   \label{#1}\endgroup
}
\newcommand{\labitem}[2]{%
\def\@itemlabel{\textbf{#1}}
\item
\def\@currentlabel{#1}\label{#2}}
\numberwithin{equation}{section}
\title{On the growth of Tate--Shafarevich groups of $p$-supersingular abelian varieties of $\gl_2$-type over $\bbZ_p$-extensions of number fields}
\author{Erman I{\c S}IK and Antonio Lei}
\subjclass[2020]{11R23 (primary); 11G05, 11R20 (secondary) }
\keywords{Iwasawa theory,  abelian varities, Mordell--Weil groups, Tate--Shafarevich groups, supersingular primes, $\bbZ_p$-extensions}
\address{University of Ottawa, Department of Mathematics and Statistics, STEM Complex, 150 Louis-Pasteur Pvt, Ottawa, Ontario, K1N 6N5, Canada}
\email{\url{eisik@uottawa.ca},  \url{antonio.lei@uottawa.ca}}
\urladdr{\url{https://sites.google.com/view/erman-isik/}\\\url{https://antoniolei.com/}}
\newcommand{\orange}[1]{{\color{orange} \sf  #1}}
\newcommand{\mylabel}[2]{#2\def\@currentlabel{#2}\label{#1}}
\DeclareFontFamily{U}{wncy}{}
    \DeclareFontShape{U}{wncy}{m}{n}{<->wncyr10}{}
    \DeclareSymbolFont{mcy}{U}{wncy}{m}{n}
    \DeclareMathSymbol{\Sh}{\mathord}{mcy}{"58}
\newcommand{\Zp}{\bbZ_p}
\newcommand{\Qp}{\bbQ_p}
\newcommand{\image}{\mathrm{Image}}
\theoremstyle{definition}
\newtheorem{definition}{Definition}[section]
\newtheorem{lemma}[definition]{Lemma}
\newtheorem{theorem}[definition]{Theorem}
\newtheorem{prop}[definition]{Proposition}
\newtheorem{corollary}[definition]{Corollary}
\newtheorem{conj}[definition]{Hypothesis}
\newtheorem{remark}[definition]{Remark}
\newtheorem{thmx}{Theorem}
\DeclareFontFamily{U}{wncy}{}
    \DeclareFontShape{U}{wncy}{m}{n}{<->wncyr10}{}
    \DeclareSymbolFont{mcy}{U}{wncy}{m}{n}
    \DeclareMathSymbol{\Sh}{\mathord}{mcy}{"58}
\newcommand{\HIw}{H^1_{\mathrm{Iw}}}
\newcommand{\hatF}{\widehat{F}}
\newcommand{\hatA}{\widehat{A}}
\newcommand{\hatG}{\widehat{G}}
\newcommand{\cO}{\mathcal{O}}
\newcommand{\ccS}{\mathcal{S}}
\newcommand{\ccP}{\mathcal{P}}
\newcommand{\ccC}{\mathcal{C}}
\newcommand{\ccO}{\mathcal{O}}
\newcommand{\ccH}{\mathcal{H}}
\newcommand{\ccX}{\mathcal{X}}
\newcommand{\ccY}{\mathcal{Y}}
\newcommand{\uCol}{\underline\col}
\newcommand{\bbG}{{\mathbb G}}
\newcommand{\bbF}{{\mathbb F}}
\newcommand{\bbQ}{{\mathbb Q}}
\newcommand{\bbZ}{{\mathbb Z}}
\newcommand{\bfc}{{\mathbf c}}
\newcommand{\bff}{{\mathbf f}}
\newcommand{\bfz}{{\mathbf z}}
\newcommand{\bfx}{{\mathbf x}}
\newcommand{\bfy}{{\mathbf y}}
\newcommand{\bfs}{{\mathbf s}}
\newcommand{\bbd}{{\mathbb d}}
\newcommand{\scrF}{{\mathscr F}}
\newcommand{\fraf}{{\mathfrak f}}
\newcommand{\fram}{{\mathfrak m}}
\newcommand{\frap}{{\mathfrak p}}
\newcommand{\fraq}{{\mathfrak q}}
\newcommand{\mat}{ M_{d \times d}(\Lambda_{\ccO})}
\newcommand{\An}{A_{(n)}}
\newcommand{\gal}{{\rm Gal}}
\newcommand{\tr}{{\rm Tr}}
\newcommand{\Hom}{{\rm Hom}}
\newcommand{\End}{{\rm End}}
\newcommand{\Sel}{{\rm Sel}}
\newcommand{\loc}{{\rm loc}}
\newcommand{\col}{{\rm Col}}
\newcommand{\iw}{{\rm Iw}}
\newcommand{\ord}{{\rm ord}}
\newcommand{\rank}{{\rm rank}}
\newcommand{\gl}{{\rm GL}}
\newcommand{\coker}{{\rm coker}}
\newcommand{\length}{{\rm length}}
\newcommand{\corank}{{\rm corank}}
\newcommand{\Ld}{ \Lambda^{\oplus d}}
\newcommand{\Ldn}{ \Lambda_{\ccO,n}^{\oplus d}}
\newcommand{\Lvn}{ \Lambda_{v,n}^{\oplus d}}
\DeclareSymbolFont{cyrletters}{OT2}{wncyr}{m}{n}
\DeclareMathSymbol{\Sha}{\mathalpha}{cyrletters}{"58}
\definecolor{Green}{rgb}{0.0, 0.5, 0.0}
\newcommand{\ZZ}{\mathbb{Z}}
\begin{document}
\maketitle

\begin{abstract}
     We study the boundedness of the Mordell--Weil rank and the growth of the $v$-primary part of the Tate--Shafarevich group of $p$-supersingular abelian varieties of ${\rm GL}_2$-type with real multiplication over $\mathbb Z_p$-extensions of number fields, where $v$ is a prime lying above $p$.  Building on the work of Iovita--Pollack in the case of elliptic curves, under precise ramification and splitting conditions on $p$, we construct explicit systems of local points using the theory of Lubin--Tate formal groups. We then define signed Coleman maps, which in turn allow us to formulate and analyse signed Selmer groups. Assuming these Selmer groups are cotorsion, we prove that the Mordell--Weil groups are bounded over any subextensions of the $\bbZ_p$-extension and provide an asymptotic formula for the growth of the $v$-primary part of the Tate--Shafarevich groups. Our results extend those of Kobayashi, Pollack, and Sprung on $p$-supersingular elliptic curves.
\end{abstract}

\section{Introduction}

One of the central themes in Iwasawa theory is the study of the asymptotic behaviour of arithmetic invariants along towers of number fields. In his foundational work \cite{Iwa73}, Iwasawa showed that for a number field $ K $ and a $ \mathbb{Z}_p $-extension $ K_\infty $ over $K$, there exist constants $ \mu_K, \lambda_K $, and $ \nu_K $ such that the $ p $-adic valuation of the class number $ h_n $ of the degree $ p^n $ subextension $ K_n $ in $K_\infty/K$ satisfies
\[
\operatorname{ord}_p(h_n) = \mu_K p^n + \lambda_K n + \nu_K \; \text{for } n\gg 0.
\]

In~\cite{Maz72}, Mazur studied this question in the context of Tate--Shafarevich groups of abelian varieties with good ordinary reduction. If $ A/K $ is an abelian variety with good ordinary reduction at all primes above $ p $, and if the $ p $-primary Selmer group over the $\mathbb{Z}_p$-extension $ K_\infty $ is cotorsion as a module over the corresponding Iwasawa algebra, then one obtains an analogous asymptotic formula describing the growth of the $ p $-primary component of the Tate--Shafarevich group. More precisely, assume that $ \Sha(A/K_n)[p^\infty] $ is finite for all $ n $, there exist constants $ \mu_A, \lambda_A $, and $ \nu_A $ such that
\[
\operatorname{ord}_p(\Sha(A/K_n)[p^\infty]) = \mu_A p^n + \lambda_A n + \nu_A,
\]
for all $ n \geq 0 $ (see \cite{Maz72}, \cite[Theorem 1.10]{Gre99} and \cite[Theorem C]{Lee20}).

\vspace{1.3mm}

The supersingular reduction case is more involved, even for elliptic curves over the cyclotomic $\mathbb Z_p$-extension, as the classical approach does not apply directly. For an elliptic curve $ E/\mathbb{Q} $ with good supersingular reduction at $ p $, and the cyclotomic $\bbZ_p$-extension of $\bbQ$, the case where $a_p(E) = 0$ has been
studied by Kurihara \cite{kur02}, Kobayashi \cite{kob03} and Pollack \cite{Pol05}. 

\vspace{1.3mm}

Again over the cyclotomic $\Zp$-extension of $\bbQ$, Sprung \cite{Spr13, Spr17} studied this question for $p$-supersingular elliptic curves with $a_p(E)\neq 0$ and for abelian varieties of $\gl_2$-type. For related results concerning upper bounds on the growth of Bloch--Kato--Shafarevich--Tate groups associated to higher-weight modular forms, see \cite{LLZ17}. The growth of Mordell--Weil ranks and Tate--Shafarevich groups of elliptic curves over the cyclotomic $\bbZ_p$-extension of certain number fields has also been studied in \cite{LL22}. An explicit sufficient condition for the boundedness of Mordell--Weil ranks over the cyclotomic $\bbZ_p$-extension of number fields where $p$ is ramified was in \cite{LP20} for abelian varieties with good supersingular reduction at $p$.

\vspace{1.3mm}

Parallel developments have taken place for elliptic curves in the anticyclotomic setting. Let $K$ be an imaginary quadratic field and $E$ an elliptic curve defined over $\mathbb{Q}$. Assuming $p \geq 5$ and that the pair $(E, K)$ satisfies the Heegner hypothesis, {\c C}iperiani proved in \cite{Cip09} that the Tate--Shafarevich group of $E$ has trivial corank over the Iwasawa algebra associated with the anticyclotomic $\mathbb{Z}_p$-extension of $K$. A more refined formula describing the growth of these groups along the extension is given in \cite{matar21, llm23} by Matar and by Lei--Lim--M{\"{u}ller}, respectively. Further results concerning the inert primes and the elliptic curves with $a_p = 0$ over the anticyclotomic $\mathbb{Z}_p$-extension of $K$ have been obtained in \cite{IL24, bko24,muller25}; in the latter cases, the elliptic curve is assumed to have complex multiplication by an order in $K$ and $p \geq 5$.

\vspace{1.3mm}

Under certain hypotheses on the vanishing of the Mordell--Weil rank and the $p$-primary part of the Tate--Shafarevich group of an elliptic curve $E$ over a number field $K$, Iovita and Pollack \cite[Theorem 5.1]{ip06} established a formula describing the growth of Tate--Shafarevich groups over a $\mathbb{Z}_p$-extension of $K$ in which $p$ splits completely and each prime above $p$ is totally ramified in the $\bbZ_p$-extension.

\vspace{1.3mm}

The main objective of this article is to extend the aforementioned theorem of Iovita and Pollack in \cite{ip06} to the setting of abelian varieties of $\mathrm{GL}_2$-type over $\mathbb{Q}$ with real multiplication, arising from weight two newforms.

\subsection{Statement of the main theorem} Let $f$ be a weight two newform of level $N$ and trivial nebentype and let $F$ be the Hecke field generated by the Fourier coefficients $a_n$ of $f$. Note that since the nebentype is trivial, the Hecke field $F$ is totally real (see \cite[Proposition 3.4]{rib04}). Write $\ccO_F$ for the ring of integers of $F$. Let $A:=A_f$ be the associated $\gl_2$-type abelian variety over $\bbQ$ of dimension $g$ so that $\ccO_F\hookrightarrow \End(A)$ and 
\begin{equation*}
    \prod L(f^\sigma,s)=L(A,s),
\end{equation*}
where the product runs over all Galois conjugates $f^\sigma$ of $f$. Note that $A$ is self-dual. 

\vspace{1.3mm}
 
 Let $p$ be an odd prime, and suppose that $A$ has good \textit{supersingular} reduction at $p$. Let $K/\bbQ$ be a finite extension and $K_\infty/K$ be a $\bbZ_p$-extension. For an integer $n \geq 0$, we write $K_n$ for the unique subextension of $K_\infty$ such that $[K_n : K] = p^n$.

\vspace{1.3mm}

 The Galois group of $K_\infty$ over $K$ is denoted by $\Gamma$. In addition, write $\Gamma_n = \gal(K_\infty/K_n)$ and $G_n = \gal(K_n/K)$. We fix once and for all a topological generator $\gamma$ of $\Gamma$. For a prime $v$ of $F$ lying above $p$, let $F_v$ denote the completion of $F$ at $v$ with the integer ring $\ccO_v$.  Let $\Lambda_v$ denote the Iwasawa algebra $\ccO_v[[\Gamma]]=\displaystyle\varprojlim_n\ccO_v[G_n]$, which we shall identify with the power series ring $\ccO_v[[X]]$ by sending $\gamma-1$ to $X$. Let $(-)^\vee := \Hom(-,F_v/\ccO_v)$ denote the Pontryagin duality functor.

 \vspace{1.3mm}

We will assume the additional hypothesis on the splitting type of the prime $p$ in $K$ and $K_\infty$.

\begin{conj}\label{splittingtype}
    The prime $p$ splits completely in $K$ into $d:=[K:\bbQ]$ distinct primes, $\frap_1,\dots,\frap_d$. Moreover, each $\frap_i$ is totally ramified in $K_\infty$.
\end{conj}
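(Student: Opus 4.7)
The plan is to verify the hypothesis as two independent local conditions at $p$.

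First I would address the splitting claim: that $p\OK = \frap_1 \cdots \frap_d$ with $K_{\frap_i} \cong \Qp$ for each $i$. Writing $K = \bbQ(\alpha)$ with minimal polynomial $m(X) \in \bbZ[X]$, the Kummer--Dedekind theorem (applicable when $p$ does not divide the index $[\OK:\bbZ[\alpha]]$) identifies the factorisation of $p\OK$ with the factorisation of $m(X) \bmod p$; complete splitting into $d$ distinct primes corresponds to $m(X)$ having $d$ distinct linear factors modulo $p$, which is a finite computation. When $p$ divides the index, one falls back on direct local analysis in the $\frap$-adic completions via Newton polygon techniques or the Montes algorithm. In either case, the outcome is purely a local statement: $K \otimes_{\bbQ}\Qp \cong \Qp^{\oplus d}$.

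Second, I would address the ramification: that each $\frap_i$ is totally ramified in $K_\infty/K$. I would consider, for each $i$, the completion $(K_\infty)_{\frakP_i}$ above a prime $\frakP_i$ lying over $\frap_i$. The key observation is that the decomposition subgroup $D_i \subseteq \Gamma \cong \bbZ_p$ of $\frap_i$ must coincide with $\Gamma$: otherwise $D_i$ is a proper closed subgroup of $\bbZ_p$, hence of finite index, and the local extension $(K_\infty)_{\frakP_i}/K_{\frap_i}$ would be finite rather than a $\bbZ_p$-extension. Once $D_i = \Gamma$, the local completion is a $\bbZ_p$-extension of $\Qp$; by the classification of $\bbZ_p$-extensions of $\Qp$ via local class field theory (the maximal abelian pro-$p$ extension of $\Qp$ has Galois group $\Zp \times \Zp$ with inertia $\Zp$ of full rank), every such extension is automatically totally ramified. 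Hence the inertia condition comes for free once the decomposition condition is checked.

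The main subtlety, and the reason this statement is imposed as a hypothesis rather than derived from the general setup, is that the two halves must hold compatibly: one needs a single global $\bbZ_p$-extension $K_\infty/K$ whose decomposition group at every one of the $d$ primes $\frap_i$ is all of $\Gamma$. Simultaneously controlling the splitting behaviour at all $d$ places is where a uniform argument is unavailable; once this compatibility is accepted as part of the initial data, the rest of the verification is a routine local computation.
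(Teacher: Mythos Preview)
This statement is not a theorem but a standing \emph{hypothesis} of the paper (the environment is declared as ``Hypothesis''), and the paper offers no proof of it. It is simply assumed throughout, most crucially so that each completion $K_{\frap_i}$ is isomorphic to $\Qp$ and the local tower $K_{\infty,\frap_i}/K_{\frap_i}$ is a totally ramified $\Zp$-extension of $\Qp$, which is what makes the Lubin--Tate constructions of \S\ref{sec:signedobjects}--\S\ref{constructionoflocalpoins} applicable at every prime above $p$. There is therefore nothing in the paper to compare your proposal against.

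That said, your attempt to argue that total ramification ``comes for free'' once the decomposition group is all of $\Gamma$ contains a genuine error. You claim that every $\Zp$-extension of $\Qp$ is automatically totally ramified because inertia has ``full rank'' in the pro-$p$ abelianisation $\Zp\times\Zp$. This is false: inertia is a rank-one summand of that group, and the complementary $\Zp$ is exactly the Galois group of the \emph{unramified} $\Zp$-extension of $\Qp$. So there are $\Zp$-extensions of $\Qp$ that are unramified, or have an arbitrary finite unramified part; total ramification of each $\frap_i$ in $K_\infty$ is a genuine extra condition, not a consequence of $D_i=\Gamma$. Your argument that $D_i$ must equal $\Gamma$ is also off: a proper closed subgroup $p^n\Zp\subset\Gamma$ is still isomorphic to $\Zp$, so the local extension would remain a $\Zp$-extension rather than become finite; what actually happens in that case is that $\frap_i$ splits into $p^n$ primes in $K_n$. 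Both halves of the hypothesis are therefore honest constraints on the data $(K,K_\infty,p)$.
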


In this article, we prove that the $\ccO_F$-rank of the Mordell–Weil group of $A$ over $K_\infty$ is bounded assuming that the signed Selmer groups $\Sel_v^{\bfs}(A/K_\infty)$ are $\Lambda_v$-cotorsion for all choices of $\bfs \in\{\sharp,\flat\}^d$ (see \S \ref{signedselmergroups}).  Furthermore, we derive an asymptotic formula for the growth of the $v$-parts of the Tate--Shafarevich groups $ \Sha(A/K_n )$ of $A$ in terms of the Iwasawa invariants of $\Sel_v^{\bfs}(A/K_\infty)^\vee$. The main result of this article is the following theorem.

\begin{thmx}\label{maintheorem} Let $A$ be an abelian variety of $\gl_2$-type over $\bbQ$ associated to a weight-two newform with trivial nebentype. Let $K$ be a number field and $p$ be an odd prime satisfying Hypothesis \ref{splittingtype}. Furthermore, assume that the signed Selmer groups $\Sel^\bfs_v(A/K_\infty)$ are cotorsion over $\Lambda_{v}$ (see Hypothesis \ref{cotorsionconjecture}).
Then the following statements hold.
\begin{itemize}
    \item [(i)] $\rank_{\ccO_F} A(K_n)$ is bounded independently of $n$. 
    \item [(ii)] Assume further that $v$-part of the $\ccO_F$-module $\Sha(A/K_n )$ is finite for all $n\geq 0$ and let $e_n := \ord_{v}\left(  \Sha(A/K_{n}) \right)$. Define \[\displaystyle r_\infty:=\sup_{n\ge0}\left\{ \rank_{\ccO_F} A(K_n)\right\}.\] 

 Then, for $n\gg 0$, we have
        \[e_n-e_{n-1}=  F_v(\bfs,n) +(p^n-p^{n-1})\mu_{v,\bfs} + e_v\lambda_{v,\bfs} -r_\infty, \]
         where $e_v$ denotes the ramification index of the extension $F_v/\Qp$, the sign $\bfs=(s_i)_{1\leq i\leq d}\in \{\sharp,\flat\}^d$ and the quantity $F_v(\bfs,n)$ are given in Proposition \ref{prop:kobayashirankofY''}, which depend on the parity of $n$; the quantities $\mu_{v,\bfs}$ and $ \lambda_{v,\bfs}$ denote the Iwasawa invariants of $\Sel_v^{\bfs}(A/K_\infty)^\vee$.
\end{itemize}
\end{thmx}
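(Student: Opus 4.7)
The plan is to adopt the standard Iwasawa-theoretic framework pioneered by Kobayashi, Pollack, and Sprung for signed Selmer groups in the supersingular setting, extended to the $\gl_2$-type and multi-prime situation following Iovita--Pollack. The decomposition of the local cohomology at $p$ afforded by Hypothesis~\ref{splittingtype} reduces global questions to products of local problems at the $d$ primes $\frap_i$, which are in turn analyzed using the signed Coleman maps constructed via Lubin--Tate formal groups.

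First I would establish a control theorem for the signed Selmer groups: for each sign $\bfs\in\{\sharp,\flat\}^d$ and each $n\geq 0$, the natural restriction map
\[
\Sel^\bfs_v(A/K_n)\longrightarrow \Sel^\bfs_v(A/K_\infty)^{\Gamma_n}
\]
should have finite kernel and cokernel whose orders are explicitly controlled. The local discrepancy at each $\frap_i$ is measured by the image and cokernel of the $s_i$-Coleman map at finite level; assembling the $d$ local contributions yields the parity-dependent term $F_v(\bfs,n)$ of Proposition~\ref{prop:kobayashirankofY''}.

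Next, since $X^\bfs_v:=\Sel^\bfs_v(A/K_\infty)^\vee$ is a finitely generated torsion $\Lambda_v$-module by hypothesis, the structure theorem provides a pseudo-isomorphism with an elementary $\Lambda_v$-module, and a standard Iwasawa-style calculation gives the asymptotic growth of $\mathrm{length}_{\ccO_v}\bigl(X^\bfs_v/\omega_n X^\bfs_v\bigr)$ in terms of $\mu_{v,\bfs}$ and $\lambda_{v,\bfs}$, where $\omega_n=(1+X)^{p^n}-1$. Combining this asymptotic with the control theorem gives an explicit formula for $|\Sel^\bfs_v(A/K_n)|$, with the $e_v$ factor appearing through the passage between $v$-adic valuation on $\ccO_v$ and length. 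Finally, the fundamental exact sequence
\[
0\longrightarrow A(K_n)\otimes_{\ccO_F} F_v/\ccO_v\longrightarrow \Sel_v(A/K_n)\longrightarrow \Sha(A/K_n)[v^\infty]\longrightarrow 0,
\]
together with the comparison between the classical $v$-Selmer group and the signed Selmer groups via the image of the Coleman maps, transfers the asymptotic for $\Sel^\bfs_v(A/K_n)$ to one for $\Sha(A/K_n)[v^\infty]$. For part~(i), if $\rank_{\ccO_F}A(K_n)$ were unbounded, the $F_v/\ccO_v$-corank of $\Sel_v(A/K_n)$, and hence of $\Sel^\bfs_v(A/K_n)$ for some choice of $\bfs$, would grow unboundedly, contradicting the cotorsion hypothesis; thus $\rank_{\ccO_F}A(K_n)$ stabilizes at $r_\infty$ for $n\gg 0$. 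The $-r_\infty$ term in part~(ii) then arises from this stabilized contribution, which cancels cleanly in the difference $e_n-e_{n-1}$.

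The main obstacle is expected to be the proof of the control theorem with sufficiently sharp error terms and the precise identification of the local quantity $F_v(\bfs,n)$. As in Kobayashi's plus/minus theory, the cokernel of a signed Coleman map at finite level depends on the parity of $n$, and here one must track $d$ such parity-dependent contributions indexed by the choices $s_i\in\{\sharp,\flat\}$ at the $d$ primes above $p$. Arranging these so that the cotorsion hypothesis is actually exploitable, and verifying that the local, global, and Mordell--Weil contributions combine into the telescoping expression claimed for $e_n-e_{n-1}$, is the most delicate part of the argument.
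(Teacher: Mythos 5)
Your sketch follows the ``ordinary case'' template of control theorem plus structure theorem for the torsion $\Lambda_v$-module, but this is not the route the paper takes, and the substitution creates genuine gaps. The paper never establishes a control theorem for $\Sel^\bfs_v(A/K_n) \to \Sel^\bfs_v(A/K_\infty)^{\Gamma_n}$; it only invokes one for the fine Selmer group (Lemma~\ref{controltheorem}). The hard step you name --- ``the comparison between the classical $v$-Selmer group and the signed Selmer groups via the image of the Coleman maps'' --- is precisely where the work lies, and your proposal does not supply it: at finite level $n$ the signed local conditions $H^1_{s_i}(K_{n,\frap_i},-)$ are not the Kummer conditions defining $\Sel_v(A/K_n)$, so a formula for $|\Sel^\bfs_v(A/K_n)|$ does not directly yield one for $|\Sha(A/K_n)[v^\infty]|$. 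The paper circumvents this via the Poitou--Tate sequence \eqref{eqn:poitoutateovern}, the auxiliary cokernel modules $\ccY_v, \ccY'_v, \ccY''_v$, and the Kobayashi rank $\nabla$ (Section~\ref{kobayashiranks}), which computes $e_n - e_{n-1}$ directly without passing through absolute sizes. This telescoping format is essential here: $F_v(\bfs,n)$ depends on the parity of $n$ and does not sum to a closed expression of the form $\mu p^n + \lambda n + \nu$. Moreover, when $a_p\neq 0$, the joint image of $\col^\sharp_v\oplus\col^\flat_v$ is a proper submodule $J_v\subsetneq\Lambda_v^{\oplus 2}$ (Theorem~\ref{thm:imageofsumofcolemanmaps}) and the finite-level local cokernel is controlled via a modified map $\col^u_{v,n,i}$ with an auxiliary unit $u$ (Lemma~\ref{lem:choosingaunit}); your description of the local error as ``the image and cokernel of the $s_i$-Coleman map at finite level'' does not account for this.

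Your argument for part~(i) also has a gap: the step from unbounded corank of $\Sel_v(A/K_n)$ to unbounded corank of $\Sel^\bfs_v(A/K_n)$ is unjustified --- the local conditions at $p$ differ --- and even granting it, you would still need a control theorem for the signed Selmer groups to contradict cotorsionness over $\Lambda_v$. The paper's proof of (i) is more direct: once $\nabla\ccX_v(A/K_n)$ is shown to be defined for $n\gg 0$ (Proposition~\ref{prop:nablaX}), the transition maps $\ccX_v(A/K_{n+1})\to\ccX_v(A/K_n)$ have finite kernel and cokernel, so $\rank_{\ccO_v}\ccX_v(A/K_n)$ stabilizes, and boundedness of $\rank_{\ccO_F}A(K_n)$ follows from \eqref{selmergroupexactsequence}.
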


 Theorem~\ref{maintheorem} generalizes the works of Lei--Lim \cite[Theorems A and B]{LL22}, Kurihara~\cite[Theorem 0.1]{kur02}, Pollack~\cite[Theorem 1.1]{Pol05}, Sprung~\cite[Theorem 1.1]{Spr13} and Kobayashi~\cite[Theorem 1.4]{kob03} for elliptic curves over cyclotomic extensions. Furthermore, when $A$ is an elliptic curve, our theorem generalizes \cite[Theorem~5.1]{ip06}, removing their hypothesis (G) on Tamagawa number and the triviality of $\Sha(A/K)[p]$ and $A(K)/pA(K)$.

\begin{remark}
  When $K=\mathbb Q$,  the boundedness of $\rank_{\bbZ} A(K_n)$ has been established in \cite[Theorem 14.4]{kato04}. See also \cite[Theorem~3.4 and the discussion in \S3.3]{LP20} for an alternative approach to establish part (i) of Theorem~\ref{maintheorem}.
\end{remark}

 \begin{remark}
    Hypothesis~\ref{splittingtype} is also assumed in the setting of \cite{ip06}, where elliptic curves over $\bbQ$ with $a_p=0$ were studied. Our main contribution is to extend their construction of local points to the more general context of abelian varieties (see §\ref{constructionoflocalpoins}). These local points are then used to define signed Coleman maps (Proposition \ref{thm:signedcolemanmaps} via the results of \cite{bbl24}) and the associated signed Selmer groups. Similar constructions were given in \cite{bbl24} using the Perrin--Riou map, but our approach is more explicit and allows for a finer analysis of the joint image of the Coleman maps (Theorem \ref{thm:imageofsumofcolemanmaps}), which plays a crucial role in the proof of Theorem \ref{maintheorem}.
 \end{remark}

\subsection{Organization} In Section~\ref{sec:signedobjects}, we determine the Honda type of the formal groups associated with abelian varieties of $\gl_2$-type with real multiplication in our context and construct an explicit logarithm of the formal group associated with Lubin--Tate extensions of height one. This logarithm is used in Section~\ref{constructionoflocalpoins} to construct a system of local points, following the approach of \cite[\S 4.2]{ip06} and \cite[\S 8.4]{kob03}. These local points are then used in Section~\ref{sec:signedcolemanmaps} to define signed Coleman maps within the framework of \cite{bbl24}, and we prove a result describing the image of their direct sum. In Section~\ref{signedselmergroups}, we define the corresponding signed Selmer groups whose Iwasawa invariants appear in the statement of Theorem~\ref{maintheorem}. The section concludes with a review of certain global cohomology groups, under the assumption that the signed Selmer groups are cotorsion.  We recall the definition and basic properties of the Kobayashi rank following \cite[\S 10]{kob03} and \cite[\S 4]{LLZ17} in Section \ref{kobayashiranks}. In Section \ref{proofofthemaintheorem}, we establish a connection between Coleman maps and Kobayashi ranks, and show how this relationship enables us to study the growth of certain local modules. We conclude by bringing these ingredients together to prove Theorem \ref{maintheorem}. Relevant results on Honda type of formal groups are reviewed in Appendix~\ref{app} for the convenience of the reader.

\subsection{Outlook}
The principal novelty of the current article is the construction of local points to define signed Coleman maps, generalising the results of \cite{ip06} by using the formal groups associated with abelian varieties of $\gl_2$-type with real multiplication. 
As in the work of Iovita--Pollack, Hypothesis~\ref{splittingtype} allows us to work with Lubin--Tate extensions of height one when we localize at a prime above $p$. It may be possible to relax this hypothesis by working with relative Lubin--Tate formal groups instead. This would likely involve working with Honda theory for formal groups defined over a non-trivial extension of $\Qp$ developed in \cite{Hon70}. Since $[K_{\frap_i}:\Qp]$ can be $>1$ without assuming Hypothesis~\ref{splittingtype}, the method to calculate the Kobayashi ranks of modules that involve the $d\times d$ matrices for $d>1$ as developed in \cite{IL24} may also be useful.

Theorem~\ref{maintheorem} complements \cite[Theorem~A]{IL24}, where the growth of the $p$-primary Tate--Shafarevich groups of an elliptic curve over the anticyclotomic $\Zp$-extension of an imaginary quadratic field $K$ is studied, assuming that $p$ is inert in $K$. Unlike the present article, \cite{IL24} employs calculations that are specifically used to find the Kobayashi ranks of modules arise from $2\times 2$ matrices whose entries are elements of an Iwasawa algebra. It would be interesting to treat both cases within a single framework, in the spirit of the unified approach of \cite{BLV23}, where anticyclotomic Iwasawa main conjectures are studied in the $p$-split and $p$-inert settings simultaneously.

\subsection*{Acknowledgements}  We thank Meng Fai Lim for helpful discussions during the preparation of the article. The authors' research is supported by the NSERC Discovery Grants Program RGPIN-2020-04259 and RGPAS-2020-00096. EI is also partially supported by a postdoctoral fellowship from the Fields Institute.

\section{Lubin--Tate extensions and formal groups attached to abelian varieties}\label{sec:signedobjects}

Let $\{ k_n\}_{n\geq 0}$ with $\bbQ_p=k_0\subset k_1 \subset k_2 \subset \dots \subset k_\infty=\cup_n k_n$ be a tower of fields with $G_n=\gal(k_n/\bbQ_n)\cong\bbZ/p^n\bbZ$ such that $k_\infty$ is a totally ramified $\bbZ_p$-extension of $\bbQ_p$. Let $L_{n+1}:=k_n(\mu_p)$ for $n\geq 0$ and $L_\infty=\cup_n L_n=k_\infty(\mu_p)$. Here, if $M$ is a field, we denote by $M(\mu_p)$ the extension of $M$ obtained by adjoining the $p$-th roots of unity in some fixed algebraic closure of $M$. It then follows that $L_\infty$ is a $\bbZ_p^\times$-extension of $\bbQ_p$, and the group of its universal norms is generated by a uniformizer $\pi$ of $\bbZ_p$ such that $\ord_p\left( \tfrac{\pi}{p}-1\right)>0$. In particular, we may choose $\pi$ so that $k_n$ coincides with the completion of $K_n$ at a prime above $p$.

\vspace{1.3mm}

Following \cite[\S 4.1]{ip06}, we fix a one-dimensional Lubin--Tate formal group $\scrF$ of height-one over $\bbQ_p$, with parameter $\pi$ and the lift of Frobenius
\begin{equation*} 
    \fraf(X):= \pi X+ \sum_{i=2}^p \binom{p}{i} X^i \in \bbZ_p[[X]].
\end{equation*}
 For every $n$, we have $L_n=\bbQ_p(\scrF[\pi^n])$, where $\scrF[\pi^n]$ denotes the $\pi^n$-torsion of the formal group $\scrF$. 

\vspace{1.3mm}

We fix a $\ZZ$-basis of $\cO_F$, which induces an injective ring homomorphism
$\Phi:\ccO_F\longrightarrow M_{g\times g}(\bbZ)$. Let $C_r=\Phi(a_r)$ where $r\in\{q,q^2\}$ and $q$ is a prime. Define the formal Dirichlet series
\[
\sum_{n=1}^\infty C_n n^{-s}
=\prod_q(I_g-C_qq^{-s}+C_{q^2}q^{1-2s})^{-1},\]
where $C_n\in M_{g\times g}(\ZZ)$.

\begin{theorem}\label{thm:nart}
The formal group $\hatA$ of the abelian variety $A$ at $p$ is isomorphic over $\bbZ$ to the formal group whose logarithm is given by 
\[
\log_{\hatA}(\bfx)=\sum_{n\ge 1}\frac{C_n}{n}\bfx^n,
\]
 where $\bfx=(X_1,\dots,X_g)^t$ is a column vector of $g$ variables and $\bfx^{n}$ denotes $\left(X^{n}_1,\dots, X_g^{n} \right)^t$.
\end{theorem}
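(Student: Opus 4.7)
The plan is to invoke Honda's theory of formal groups, reviewed in Appendix~\ref{app}, which identifies the formal group $\hatA$ of an abelian variety over $\bbQ$ with good reduction at $p$ (up to strict isomorphism over $\bbZ$) in terms of a matrix ``type'' $u(T) \in M_{g\times g}(\Zp)[[T]]$ extracted from the Euler factor of $L(A,s)$ at $p$. The logarithm of $\hatA$ is then produced from $u(T)$ via the functional equation lemma.

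First, I would rewrite the local Euler factors in matrix form. Because $A = A_f$ has real multiplication by $\ccO_F$ and $L(A,s) = \prod_\sigma L(f^\sigma, s)$, the scalar Hecke eigenvalues at each prime $q$ translate, through the chosen $\bbZ$-basis of $\ccO_F$ and the injection $\Phi \colon \ccO_F \hookrightarrow M_{g \times g}(\bbZ)$, into the matrix Euler factor $(I_g - C_q q^{-s} + C_{q^2} q^{1-2s})^{-1}$. Formally expanding the Euler product $\prod_q (I_g - C_q q^{-s} + C_{q^2} q^{1-2s})^{-1}$ yields the Dirichlet series $\sum_{n \geq 1} C_n n^{-s}$ with $C_n \in M_{g\times g}(\bbZ)$, and identifies $u(T) := I_g - C_p T + p C_{p^2} T^2$ as the inverse of the generating series $\sum_{n \geq 0} C_{p^n} T^n$ at $p$.

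Next, I would apply the functional equation lemma from Appendix~\ref{app}: the Honda congruences satisfied by the $C_n$ as a consequence of the Euler product ensure that the formal power series $\log(\bfx) := \sum_{n \geq 1}(C_n/n)\bfx^n$ has an inverse under composition with integer coefficients, so that it is the logarithm of a genuine formal group law over $\bbZ$ of Honda type $u(T)$. To identify this formal group with $\hatA$ itself, I would compare Honda types: by classical results of Honda and Tate, the type of $\hatA$ at $p$ is determined by the characteristic polynomial of Frobenius on the Dieudonn\'e module, and by the Eichler--Shimura relation for weight-two newforms with trivial nebentype this characteristic polynomial equals $I_g - C_p T + p C_{p^2} T^2$. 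Honda's strong isomorphism theorem then produces the desired identification over $\bbZ$.

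The main obstacle will be verifying that the matrix translation through $\Phi$ of the Hecke recursions yields a Honda type consistent globally (i.e., over $\bbZ$, not only over $\Zp$), and that it matches the Frobenius action on the $p$-divisible group of $A$ on the nose, rather than merely up to conjugation. This is where the $\gl_2$-type hypothesis on $A$ and the specific choice of $\bbZ$-basis of $\ccO_F$ are essential, and where some care must also be taken to ensure compatibility of the matrix Euler factors at the (finitely many) bad primes dividing the level $N$.
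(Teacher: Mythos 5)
The paper's own proof is a one-line citation to \cite[Theorem 2.2]{deningernart}, so you are attempting to reconstruct the content of that reference rather than to match an argument the paper actually gives. Your outline is broadly in the spirit of Deninger--Nart (both rest on Honda's theory), but it has two problems. First, the Honda type you write down, $u(T) = I_g - C_pT + pC_{p^2}T^2$, is not a special element in Honda's sense: by the definition in Appendix~\ref{app}, the constant term must be $pI_g$. The correct type, as recorded in Theorem~\ref{hondatypeofLseries}, is $pI_g - C_pT + C_{p^2}T^2$; compare also the paper's own $u(T) = T^2 - C_pT + p$ (for $C_{p^2}=I_g$) in the proof of Theorem~\ref{thm:Q-split}. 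What you wrote is the local Euler factor in the variable $T=p^{-s}$, not its Honda reversal, and it is likewise not the characteristic polynomial of Frobenius $X^2 - C_pX + pI_g$ that you claim Eichler--Shimura produces.

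Second, and more seriously, Honda's strong isomorphism theorem (Theorem~\ref{hondaisomorphismtheorem}(ii)) is a statement over $\bbZ_p$, so invoking it cannot by itself yield the asserted isomorphism over $\bbZ$. You correctly flag this global-versus-local mismatch as the ``main obstacle'' in your closing paragraph, but the sketch gives no mechanism for resolving it: patching the prime-by-prime $\bbZ_p$-isomorphisms into a single $\bbZ$-isomorphism, and handling the primes of bad reduction, is precisely what Deninger--Nart's theorem supplies, and it does not follow merely from comparing Honda types locally. As written, your argument names the hard step but leaves it open, so there is a genuine gap at exactly the point the paper outsources to the reference.
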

\begin{proof}
This is a consequence of \cite[Theorem 2.2]{deningernart}.
\end{proof}


Consider the sequence $\{x_k\}$ of $g\times g$ matrices defined by $x_{-1}=0$, $x_0=I_g$ and 
\begin{equation}\label{eq:recurence}
    px_k-C_p x_{k-1}+x_{k-2}=0 \text{ for } k\geq 1,
\end{equation}
where $C_p=\Phi(a_p)$. In particular, $p^k x_k$ has integral coefficients.

\begin{definition}
We define $\fraf^{(0)}(X)=X$ and if $k\ge1$ is an integer, we define $\fraf^{(k)}$ recursively by
\[
\fraf^{(k)}(X)=\fraf(\fraf^{(k-1)}(X)).
\]
Furthermore, we define
\[
\ell(\bfx)=\sum_{k\ge0}x_k\fraf^{(k)}(\bfx),
\]
where $\fraf^{(k)}(\bfx):=\left(\fraf^{(k)}(X_1),\ldots,\fraf^{(k)}(X_g) \right)^t$. 
\end{definition}

\begin{lemma}\label{lem:integral-xk}
    For all integers $k,t\ge0$, the entries of $x_k\fraf^{(t)}(\mathbf{x}+p\mathbf{y})-x_k\fraf^{(t)}(\mathbf{x})$ are defined in $p^{t+1-\lfloor k/2\rfloor}\Zp[X_1,\dots,X_g,Y_1,\dots, Y_g]$.
\end{lemma}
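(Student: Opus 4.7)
The plan is to prove the bound by induction on $k$, using the recursion (\ref{eq:recurence}) for $x_k$ together with an auxiliary $p$-adic estimate for the iterates of $\fraf$. First I establish, by induction on $t$, the key estimate
\[
\delta_t := \fraf^{(t)}(\mathbf{x}+p\mathbf{y}) - \fraf^{(t)}(\mathbf{x}) \in p^{t+1}\Zp[X_1,\dots,X_g,Y_1,\dots,Y_g].
\]
For $t = 0$ this is $p\mathbf{y}$. For the step, writing $Z = \fraf^{(t-1)}(\mathbf{x})$ and $h = \delta_{t-1}$, I expand componentwise
\[
\fraf(Z+h) - \fraf(Z) = \pi h + \sum_{i=2}^{p}\binom{p}{i}\bigl((Z+h)^i - Z^i\bigr).
\]
Since $\pi = pu$ with $u \in \Zp^\times$ (from $\ord_p(\pi/p - 1) > 0$), since $p \mid \binom{p}{i}$ for $2 \leq i \leq p-1$, and since the inductive hypothesis gives $h \in p^t \Zp[\ldots]$ with $t \geq 1$, every summand lies in $p^{t+1}\Zp[\ldots]$; in particular the top term $h^p$ (from $i = j = p$) sits in $p^{pt}\Zp[\ldots] \subseteq p^{t+1}\Zp[\ldots]$ because $pt \geq t+1$ whenever $p \geq 2$ and $t \geq 1$.

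I then couple this with the observation that the supersingular hypothesis at $p$ forces $a_p \in p\,\ccO_F$, so that $C_p = pC'$ for some $C' \in M_{g\times g}(\ZZ)$, and induct on $k$ to prove the lemma. The base case $k = 0$ is the estimate for $\delta_t$ just established, and $k = 1$ gives $x_1\delta_t = C'\delta_t \in p^{t+1}\Zp[\ldots] = p^{t+1-\lfloor 1/2\rfloor}\Zp[\ldots]$. For $k \geq 2$, multiplying (\ref{eq:recurence}) through by $\delta_t$ yields
\[
p\,(x_k\delta_t) = pC'\,(x_{k-1}\delta_t) - (x_{k-2}\delta_t);
\]
by the inductive hypothesis, the first term on the right lies in $p^{t+2-\lfloor(k-1)/2\rfloor}\Zp[\ldots]$ and the second in $p^{t+1-\lfloor(k-2)/2\rfloor}\Zp[\ldots]$. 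A short parity check, using $\lfloor(k-1)/2\rfloor = \lfloor(k-2)/2\rfloor$ when $k$ is even and $\lfloor(k-1)/2\rfloor = \lfloor(k-2)/2\rfloor + 1$ when $k$ is odd, shows that the minimum exponent equals $t+2-\lfloor k/2\rfloor$ in both parities, hence $x_k\delta_t \in p^{t+1-\lfloor k/2\rfloor}\Zp[\ldots]$.

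The main technical obstacle is the $p$-adic bookkeeping in the auxiliary estimate for $\delta_t$, where contributions from $\pi$, from the middle binomial coefficients, and from the top power $h^p$ must all be tracked simultaneously; the inductive chain would break down entirely if the increment $h$ were not already $p$-divisible. A secondary subtlety is verifying that the supersingular hypothesis translates into $C_p \in pM_{g\times g}(\ZZ)$; this follows from requiring each Newton slope of $T^2 - \sigma(a_p)T + p$ to equal $1/2$ across the embeddings $\sigma$ of $F$, which forces $a_p \in p\,\ccO_F$.
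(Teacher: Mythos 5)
Your proof is correct and follows essentially the same approach as the paper: both establish the estimate $\fraf^{(t)}(\mathbf{x}+p\mathbf{y})-\fraf^{(t)}(\mathbf{x})\in p^{t+1}\Zp[\ldots]$ by induction on $t$ (relying on $\pi\in p\Zp$, $p\mid\binom{p}{i}$ for $1\le i\le p-1$, and $pt\ge t+1$ for the $h^p$ term), and both exploit the recursion \eqref{eq:recurence} together with $p\mid C_p$. The only cosmetic difference is that the paper first proves the standalone bound $x_k\in p^{-\lfloor k/2\rfloor}M_{g\times g}(\ZZ)$ and then multiplies the two estimates, whereas you induct directly on the product $x_k\delta_t$; the parity bookkeeping you carry out is exactly what the paper's separate induction on $x_k$ amounts to.
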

\begin{proof}
Since $A$ is supersingular at $p$, the entries of $C_p$ are divisible by $p$. Consequently, $x_1$ is defined over $\ZZ$. By induction, we can prove that the entries of $x_{k}$ are in $p^{-\lfloor k/2\rfloor}\ZZ$ using \eqref{eq:recurence}. 

The lemma would follow from the following inclusion
\[
\fraf^{(t)}(X+pY)-\fraf^{(t)}(X)\in p^{t+1}\Zp[X,Y],
\]
which can be proved using induction. Indeed, if $t=0$, the inclusion is immediate. Suppose that it holds for some $t\ge0$. Let us write \[\fraf^{(t)}(X+pY)=\fraf^{(t)}(X)+p^{t+1}S(X,Y).\]
Then
\begin{align*}
\fraf^{(t+1)}(X+pY)&=\fraf(\fraf^{(t)}(X)+p^{t+1}S(X,Y))\\
&=\pi\left(\fraf^{(t)}(X)+p^{t+1}S(X,Y)\right)+\sum_{i=2}^p\binom{p}{i}\left(\fraf^{(t)}(X)+p^{t+1}S(X,Y)\right)^i\\
&\equiv \pi\fraf^{(t)}(X)+\sum_{i=2}^p\binom{p}{i}\left(\fraf^{(t)}(X)\right)^i=\fraf^{(t+1)}(X)\mod p^{t+2}\Zp[X,Y]
\end{align*}
as $\pi\in p\Zp$. This gives the desired inclusion for all $t\ge0$ by induction.
\end{proof}

\begin{theorem}\label{thm:Q-split}

We have that $\hat\ell(\bfx,\bfy):= \ell^{-1}(\ell(\bfx)+\ell(\bfy))$ is a $g$-dimensional formal group over $\Zp$. Further, the formal groups $\hat{A}$ and $\hat\ell$ are isomorphic over $\Zp$.
\end{theorem}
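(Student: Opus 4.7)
The plan is to invoke Honda's theory (as reviewed in Appendix~\ref{app}) to simultaneously establish integrality of $\hat\ell$ and the isomorphism $\hatA \cong \hat\ell$. The crucial first step is to show that $\ell$ satisfies a Honda-type functional equation. Expanding
\[
p\,\ell(\bfx) - C_p\,\ell(\fraf(\bfx)) + \ell\bigl(\fraf^{(2)}(\bfx)\bigr)
\]
and collecting terms proportional to $\fraf^{(k)}(\bfx)$, with the convention $x_{-1}=x_{-2}=0$, produces
\[
\sum_{k\geq 0}\bigl(p\,x_k - C_p\,x_{k-1} + x_{k-2}\bigr)\,\fraf^{(k)}(\bfx).
\]
By the recurrence \eqref{eq:recurence}, every coefficient with $k\geq 1$ vanishes, and the $k=0$ term contributes $p\,x_0\,\bfx = p\bfx$, so we obtain
\[
p\,\ell(\bfx) - C_p\,\ell(\fraf(\bfx)) + \ell\bigl(\fraf^{(2)}(\bfx)\bigr) = p\bfx.
\]

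With this equation in hand, the integrality of $\hat\ell$ follows from Honda's functional equation lemma. Since $\pi\in p\Zp$ and $\binom{p}{i}\in p\Zp$ for $1\leq i\leq p-1$, the series $\fraf$ is a Frobenius lift in the sense that $\fraf(X)\equiv X^p\pmod p$. This identifies $\ell$ as a Honda logarithm of type $u := p - C_p F + F^2$, whence $\hat\ell(\bfx,\bfy)=\ell^{-1}(\ell(\bfx)+\ell(\bfy))$ defines a $g$-dimensional formal group law over $\Zp$. Lemma~\ref{lem:integral-xk} supplies the integrality estimates feeding into the hypotheses of the functional equation lemma.

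For the second assertion, I would compare Honda types. The supersingular Euler factor at $p$ (namely $I_g - C_p\,p^{-s} + p^{1-2s}$) yields the recursion $C_{p^k} = C_p\,C_{p^{k-1}} - p\,C_{p^{k-2}}$ for $k\geq 2$. Setting $y_k := C_{p^k}/p^k$, this becomes $p\,y_k - C_p\,y_{k-1} + y_{k-2}=0$ with $y_0 = I_g$ and $y_1 = C_p/p$, which are exactly the recurrence and initial conditions satisfied by $x_k$; hence $y_k = x_k$ for all $k\geq 0$. Consequently, $\log_{\hatA}(\bfx) = \sum_{n\geq 1}\tfrac{C_n}{n}\bfx^n$ satisfies a Honda equation of the same type $u$, now with respect to the standard Frobenius lift $\bfx\mapsto \bfx^p$. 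By Honda's comparison theorem, two formal groups whose logarithms are of the same type (for Frobenius lifts congruent modulo $p$, which holds here) are isomorphic over $\Zp$.

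The main potential obstacle is reconciling the two Frobenius lifts used for $\ell$ (namely $\fraf$) and for $\log_{\hatA}$ (namely $\bfx\mapsto \bfx^p$), as Honda's theorem is often stated with a fixed Frobenius lift. This is handled by the appendix, where the relevant equivalence of types only depends on the chosen lifts agreeing modulo $p$, a condition automatically satisfied here. Modulo this setup, the theorem reduces to a direct invocation of Honda's functional equation lemma and uniqueness theorem.
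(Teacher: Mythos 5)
Your proof is correct and reaches the same destination as the paper (match the Honda types of $\ell$ and $\log_{\hatA}$, then invoke Honda's comparison theorem, Theorem~\ref{hondaisomorphismtheorem}), but the route you take for the first half is genuinely different and, in one respect, cleaner. The paper shows $u*\ell\equiv 0 \bmod p$ directly by substituting $\fraf^{(k)}(X)\equiv X^{p^k}\bmod p$ into $\ell(\bfx^{p^2})-C_p\ell(\bfx^p)+p\ell(\bfx)$ and applying the recurrence; this substitution has to be read carefully because the coefficients $x_k$ have denominators of size $p^{\lfloor k/2\rfloor}$, so naive term-by-term replacement modulo $p$ is not automatic. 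You instead prove the \emph{exact} identity $p\ell(\bfx)-C_p\ell(\fraf(\bfx))+\ell\bigl(\fraf^{(2)}(\bfx)\bigr)=p\bfx$, which telescopes perfectly via the recurrence \eqref{eq:recurence}, and then use Lemma~\ref{lem:integral-xk} to bridge from the Lubin--Tate Frobenius lift $\fraf$ to the $p$-power lift appearing in Definition~\ref{definitionofu*l}: writing $\fraf(\bfx)=\bfx^p+p\bfg(\bfx)$ and applying the lemma with $t=k$ shows $\ell(\fraf(\bfx))-\ell(\bfx^p)\in p\Zp[[\bfx]]$ and similarly for $\fraf^{(2)}$, so the exact identity does imply $u*\ell\equiv 0\bmod p$. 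This cleanly isolates where the integrality estimate is doing the work, which the paper's displayed computation elides.

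One small spot that should be made explicit: to conclude that $\log_{\hatA}$ has Honda type $u$, the identity $C_{p^k}/p^k=x_k$ is not enough by itself --- you also need the multiplicativity $C_{mp^k}=C_m C_{p^k}$ for $p\nmid m$, which comes from the Euler product defining the $C_n$'s. The paper cites exactly this fact; alternatively, Theorem~\ref{hondatypeofLseries} in the appendix directly supplies the Honda type of $\log_{\hatA}$ and would let you skip the recursion altogether. With that supplement your argument is complete, and the identification $y_k=x_k$ (which is a correct and nice observation) is not strictly necessary. The final step, applying Theorem~\ref{hondaisomorphismtheorem}(ii) with $v=I_g$, is the same in both proofs and is correct.
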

\begin{proof}
In the light of Theorem~\ref{hondaisomorphismtheorem}, it is enough to show that the two formal groups have the same Honda type $u(T):=T^2-C_pT+p$. In particular, we need to show that
\begin{equation*}
    u*\ell\equiv u*\log_{\hat{A}}\equiv0\mod p\bbZ_p[[X]],
\end{equation*}
where  $u*\ell$ and $ u*\log_{\hat{A}}$ are given as in Definition \ref{definitionofu*l}. Note that $\fraf^{(k)}(X)\equiv X^{p^k} \mod p $. We then deduce that
\begin{align*}
    u*\ell&=\ell(\bfx^{p^2})-C_p\ell(\bfx^{p})+p\ell(\bfx)\\
    &=\sum_{k\ge0}x_k \fraf^{(k)}(\bfx^{p^2})-C_p\sum_{k\ge0}x_k \fraf^{(k)}(\bfx^p)+p\sum_{k\ge0}x_k \fraf^{(k)}(\bfx) \\
    &\equiv \sum_{k\ge2}(px_k-C_px_{k-1}+x_{k-2}) \bfx^{p^{k+2}} +p\bfx \mod p\\
    &\equiv 0 \mod p.   
\end{align*}
It then follows from Theorem \ref{hondaisomorphismtheorem} (i) that $\hat\ell$ is a formal group over $\Zp$.  The proof for $u*\log_{\hat{A}}$ is similar and relies on the fact that $C_{mp^k}=C_mC_{p^k}$ whenever $p\nmid m$.
\end{proof}

For the rest of this section, we identify these two formal groups.

\begin{lemma}\label{injectivityoflogarithm}
    The group homomorphism $\ell: \hatA(L_n)\longrightarrow {\widehat{\bbG}}^g_a(L_n)$ of formal groups is injective, where ${\widehat{\bbG}}^g_a$ is the additive formal group of dimension $g$.
\end{lemma}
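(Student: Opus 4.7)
My plan is to deduce the injectivity by showing two things: (a) the kernel of $\ell$ on $\hat A(L_n)$ is contained in the torsion subgroup, and (b) $\hat A(L_n)$ is torsion-free.

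For (a), Theorem~\ref{thm:Q-split} identifies $\ell$ with the formal group logarithm of $\hat A$, which becomes an isomorphism after base change to $\bbQ_p$, so it admits a formal inverse $\exp \in L_n[[X_1,\dots,X_g]]^g$. For $\bfx\in\hat A(L_n)$ whose coordinates have sufficiently large valuation, both $\ell$ and $\exp$ converge and give mutually inverse bijections with the corresponding neighborhood of $\widehat{\bbG}_a^g(L_n)$, so $\ell$ is injective on such a neighborhood $U$. For an arbitrary $\bfx\in\hat A(L_n)$ with $\ell(\bfx)=0$, I use that the formal group $\hat A$ has finite height (equal to $2g$, by supersingular reduction), so the valuations of the iterates $[p^m]_{\hat A}(\bfx)$ tend to infinity as $m\to\infty$. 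For $m$ sufficiently large we have $[p^m]_{\hat A}(\bfx)\in U$, and the identity $\ell([p^m]_{\hat A}(\bfx))=p^m\ell(\bfx)=0$ forces $[p^m]_{\hat A}(\bfx)=0$. Hence $\bfx$ is a $p$-power torsion point of $\hat A(L_n)$.

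For (b), supersingular reduction at $p$ implies that the formal $p$-divisible group $\hat A[p^\infty]$ coincides with $A[p^\infty]$ (there is no \'etale part), so any torsion in $\hat A(L_n)$ corresponds to a $G_{L_n}$-fixed point in $A[p^\infty]$. The extension $L_n=\bbQ_p(\scrF[\pi^n])$ is abelian over $\bbQ_p$, so $G_{L_n}$ contains the closed commutator subgroup $[G_{\bbQ_p},G_{\bbQ_p}]$. For each prime $v$ of $\ccO_F$ above $p$, the $G_{\bbQ_p}$-representation $A[v]$ is $2$-dimensional and irreducible (a standard property of supersingular representations attached to weight-two newforms): a nonzero $[G_{\bbQ_p},G_{\bbQ_p}]$-fixed vector would force the representation to factor through $G_{\bbQ_p}^{\mathrm{ab}}$ and decompose into characters, contradicting irreducibility. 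Hence $A[v]^{G_{L_n}}=0$, and a standard induction on $k$ using the short exact sequence $0\to A[v^{k-1}]\to A[v^k]\to A[v]\to 0$ gives $A[v^k]^{G_{L_n}}=0$ for all $k$. Summing over $v\mid p$ yields $\hat A(L_n)_{\tor}=0$, which completes the proof.

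The main technical obstacle lies in step (b), combining the irreducibility of the supersingular Galois representation with the fact that $L_n\subset\bbQ_p^{\mathrm{ab}}$; this is essentially the input that replaces the concrete Lubin--Tate calculation used in the elliptic curve case of \cite{ip06}.
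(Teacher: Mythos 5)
Your proposal is correct and follows essentially the same strategy as the paper: reduce injectivity to the vanishing of torsion in $\hatA(L_n)$, then rule out $p$-power torsion using the irreducibility of the local residual representation attached to a $p$-supersingular weight-two form together with the fact that $L_n/\Qp$ is abelian. The paper compresses your step (b) into a citation of \cite[Lemma~4.4]{lei11} (which in turn relies on \cite{edixhoven} and \cite{BLZ04}) and handles prime-to-$p$ torsion separately via \cite{HS00}, whereas you bypass the latter by showing directly in step (a) that $\ker\ell$ consists of $p$-power torsion.
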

\begin{proof}
Since the abelian variety is modular and has supersingular reduction at $ p$, the argument used in the proof of \cite[Lemma 4.4]{lei11}, which relies on the result given in \cite[Theorem~2.6]{edixhoven} and \cite[Proposition 4.1.4]{BLZ04} on the local residual representation of a $p$-non-ordinary modular form, can be applied to show that $ A(L_n)[p^\infty] = 0 $ for $ n \geq 0$. Since the kernel of $\ell$ consists of torsion points and $\hatA(L_n)$ has no prime-to-$p$ torsion (see \cite[Proposition C.2.5 and Theorem C.2.6]{HS00}), the lemma follows.
\end{proof}

\section{Construction of local points over Lubin--Tate extensions}\label{constructionoflocalpoins}

Choose a $\pi$-sequence $\{e_n\}_{n\geq 0}$ in $L_\infty$, i.e. $e_n\in \scrF[\pi^n]\setminus \scrF[\pi^{n-1}]$ such that $\fraf(e_n)=e_{n-1}$ for all $n\geq 1$ and $e_0=0$. 

Let $u_1\in\ZZ^{\oplus g}$ be the column vector that represents $1\in\cO_F$ with respect to the basis chosen for the ring homomorphism $\Phi$. In particular, the orbit of $u_1$ under the action of $\cO_F$ (induced by $\Phi$) is $\ZZ^{\oplus g}$. After a change of basis if necessary, we can take $u_1=(1,\dots,1)^t$. Indeed, we can choose an integral basis of $F$ that is of the form $\{1,\alpha_2,\dots,\alpha_g\}$. Then $\{1-\sum_{i=2}^g\alpha_i, \alpha_2,\dots,\alpha_g\}$ is another integral basis of $F$. The vector representing $1$ with respect to this basis is then equal to  $u_1$. 

Let $\epsilon\in \left(p\bbZ_p\right)^g$ be such that
\[
\ell(\epsilon)= (1+p-C_p)^{-1}\left(pu_1\right).
\]
Note that we have written $1+p$ for the $g\times g$ scalar matrix whose diagonal entries are all equal to $1+p$. As $a_p$ is a non-unit of $\ccO_F$, the matrix $C_p$ is $p$-adically nilpotent. Thus, $1+p-C_p$ is $p$-adically unipotent and belongs to $\gl_g(\bbZ_p)$. We regard $\epsilon$ as an element of $\hatA(\Qp)$.

\begin{definition}
    We define $c_0=\epsilon\in \hatA(\Qp)$, and
\begin{equation*}
    c_n:=\epsilon [+]_{\hatA} (e_n u_1)\in\hatA(L_n),\; \text{for all}\; n\geq 1,
\end{equation*}
where $[+]_{\hatA}$ is the addition of $\hatA$.

We furthermore define $d_n$ to be the image of $c_{n+1}$ under the trace map $\hatA(L_{n+1})\rightarrow \hatA(k_n)$ for $n\ge0$.
\end{definition}

  Fix a prime $v$ of $F$ lying above $p$ and let $F_v$ denote the completion of $F$ at $v$ with the integer ring $\ccO_v$. Note that $\Phi$ induces an $\cO_v$-action on $\hatA(M)$ if $M$ is a finite extension of $\Qp$. Under this action, $c_0=\epsilon$ is a generator of the $\ccO_v$-module $\hatA(\Qp)$. Given finite extensions $M_1\supset M_2\supset \Qp$, let $\tr_{M_1/M_2}$ be the trace map $M_1\longrightarrow M_2$. By an abuse of notation, we use the same notation to denote the trace map $\hatA(M_1)\longrightarrow \hatA(M_2)$ with respect to the group law on $\hatA$.

\begin{theorem}\label{thm:QsystemforT}
 For $n\geq 0$, there exists $d_n\in \hatA(k_n)$ such that:
    \begin{enumerate}[label=(\roman*)]
        \item For $n\ge1$, $\tr_{n+1/n}\left( d_{n+1} \right)= C_pd_n- d_{n-1}$.
        \item  $\tr_{1/0}\left(d_1\right)=\left(C_p-(C_p-2)^{-1}(p-1)\right)\cdot d_0$.
 \end{enumerate}
\end{theorem}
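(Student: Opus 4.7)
The plan is to reduce both identities to computations in the additive formal group via the logarithm $\ell$, which is injective on $\hatA(L_n)$ by Lemma~\ref{injectivityoflogarithm} and, being defined over $\Zp$, commutes with every Galois action and hence with traces. Applying $\ell$ to $c_{n+1}=\epsilon\,[+]_\hatA\,(e_{n+1}u_1)$ and using $\fraf^{(k)}(e_{n+1})=e_{n+1-k}$ for $0\le k\le n$ (with $\fraf^{(k)}(e_{n+1})=0$ for $k\ge n+1$), one gets
\[
\ell(c_{n+1})=\ell(\epsilon)+\sum_{k=0}^{n}x_k\,e_{n+1-k}\,u_1.
\]
Set $T_j:=\tr_{L_j/k_{j-1}}(e_j)\in k_{j-1}$. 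Using the Lubin--Tate picture $k_n=L_{n+1}^{\mu_{p-1}}$ together with the subgroup identities $L_j\cdot k_n=L_{n+1}$ and $L_j\cap k_n=k_{j-1}$ (for $1\le j\le n+1$), restriction induces an isomorphism $\gal(L_{n+1}/k_n)\xrightarrow{\sim}\gal(L_j/k_{j-1})$, so $\tr_{L_{n+1}/k_n}(e_j)=T_j$. Applying $\tr_{L_{n+1}/k_n}$ to the display above therefore yields
\[
\ell(d_n)=(p-1)\ell(\epsilon)+\sum_{j=1}^{n+1}x_{n+1-j}\,T_j\,u_1.
\]
Two further arithmetic facts about $T_j$ are needed: $T_1=-p$ (sum of roots of the Eisenstein polynomial $\fraf(X)/X$ satisfied by $e_1$), and for $j\ge 2$, $\tr_{k_{j-1}/k_{j-2}}(T_j)=-p(p-1)$ (from the tower $\tr_{L_j/k_{j-2}}=\tr_{L_{j-1}/k_{j-2}}\circ\tr_{L_j/L_{j-1}}$ and $\tr_{L_j/L_{j-1}}(e_j)=-p$, the latter being the negative of the $X^{p-1}$-coefficient of $\fraf(X)-e_{j-1}$).

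For part~(i), I would compute $\ell(\tr_{n+1/n}(d_{n+1}))$ and $\ell(C_pd_n-d_{n-1})$ separately and match them. On the left, $\tr_{k_{n+1}/k_n}(T_j)=pT_j$ for $j\le n+1$ (since $T_j\in k_{j-1}\subseteq k_n$), while the boundary value is $\tr_{k_{n+1}/k_n}(T_{n+2})=-p(p-1)$; this gives
\[
\ell(\tr_{n+1/n}(d_{n+1}))=p(p-1)\ell(\epsilon)+p\sum_{j=1}^{n+1}x_{n+2-j}\,T_j\,u_1-p(p-1)u_1.
\]
On the right, the recursion $C_px_{k-1}-x_{k-2}=px_k$ (equivalently $C_px_{n+1-j}-x_{n-j}=px_{n+2-j}$) collapses the $T_j$-parts and produces
\[
\ell(C_pd_n-d_{n-1})=(p-1)(1+p-C_p)\ell(\epsilon)+C_pT_{n+1}u_1+p\sum_{j=1}^{n}x_{n+2-j}\,T_j\,u_1.
\]
The residual $\ell(\epsilon)$-terms cancel via $(1+p-C_p)\ell(\epsilon)=pu_1$, and the single mismatch $(px_1-C_p)T_{n+1}u_1$ vanishes because $px_1=C_p$ (the $k=1$ case of~\eqref{eq:recurence}, with $x_{-1}=0$). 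Injectivity of $\ell$ then delivers the identity in $\hatA(k_n)$.

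For part~(ii), direct computation using $T_1=-p$ gives $\ell(d_0)=(p-1)\ell(\epsilon)-pu_1=p(1+p-C_p)^{-1}(C_p-2)u_1$, and using $\tr_{1/0}(d_1)=\tr_{L_2/\Qp}(c_2)$, $\tr_{L_2/L_1}(e_2)=-p$, $\tr_{L_2/\Qp}(e_1)=-p^2$, and $px_1=C_p$, one finds
\[
\ell(\tr_{1/0}(d_1))=p(1+p-C_p)^{-1}\bigl(C_p^2-2C_p-(p-1)\bigr)u_1.
\]
The claim then reduces to the matrix identity $\bigl(C_p-(C_p-2)^{-1}(p-1)\bigr)(C_p-2)=C_p^2-2C_p-(p-1)$, which is immediate. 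The principal obstacle is the combinatorial bookkeeping required to simultaneously manipulate the recursion-shifted sums of $x_k$'s and to correctly isolate the boundary term at $j=n+2$, whose trace behaves differently from those of the other $T_j$'s. A secondary but essential point in~(ii) is the invertibility of $C_p-2=\Phi(a_p-2)$, which holds because $p$ is odd and $a_p$ is a non-unit of $\ccO_F$ at every prime above $p$, so $a_p-2$ is a unit of $\ccO_v$ for each $v\mid p$.
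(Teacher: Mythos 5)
Your proof is correct in substance and uses the same core ingredients as the paper (the logarithm $\ell$, the recurrence $px_k = C_p x_{k-1} - x_{k-2}$, and the trace identities for the $e_j$), but it organizes the computation differently. The paper first establishes the clean recursion $\ell\bigl(\tr_{L_n/L_{n-1}}(c_n)\bigr) = C_p\ell(c_{n-1}) - \ell(c_{n-2})$ for the $c_n$ over the Lubin--Tate tower $L_n$ and only then descends to $k_n$ by applying $\tr_{L_{n+1}/k_n}$, whereas you skip this intermediate level and compute $\ell(d_n)$ directly, which forces you to track the auxiliary quantities $T_j = \tr_{L_j/k_{j-1}}(e_j)$ together with their behaviour under further traces in the $k$-tower. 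That bookkeeping is heavier, but it makes explicit the Galois-theoretic facts (the restriction isomorphisms $\gal(L_{n+1}/k_n)\cong\gal(L_j/k_{j-1})$, and $\tr_{k_{n+1}/k_n}(T_{n+2}) = -p(p-1)$) that the paper invokes silently when it passes from the $L$-level recursion for the $c_n$ to the $k$-level recursion for the $d_n$. Your observation that $C_p - 2 = \Phi(a_p - 2)$ lies in $\gl_g(\Zp)$ because $p$ is odd and $a_p$ is a non-unit at every place of $F$ above $p$ is correct and supplies a detail needed for $(C_p-2)^{-1}$ in part~(ii) that the paper leaves implicit.

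There is one transcription error in the middle of part~(i). From $\ell(d_n) = (p-1)\ell(\epsilon) + \sum_{j=1}^{n+1} x_{n+1-j} T_j u_1$ and the recurrence $C_p x_{n+1-j} - x_{n-j} = p x_{n+2-j}$, one obtains
\[
\ell(C_p d_n - d_{n-1}) = (p-1)(C_p - 1)\ell(\epsilon) + C_p T_{n+1} u_1 + p\sum_{j=1}^{n} x_{n+2-j} T_j u_1,
\]
so the $\ell(\epsilon)$-coefficient should be $(p-1)(C_p-1)$, not the $(p-1)(1+p-C_p)$ you wrote. The quantity $(p-1)(1+p-C_p)\ell(\epsilon)$ is what appears in the \emph{difference} $\ell\bigl(\tr_{n+1/n}(d_{n+1})\bigr) - \ell(C_p d_n - d_{n-1})$, together with $-p(p-1)u_1$ and $(px_1 - C_p)T_{n+1}u_1$, and your subsequent cancellation using $(1+p-C_p)\ell(\epsilon)=pu_1$ and $px_1 = C_p$ is exactly the right one; so this reads as a slip in transcribing an intermediate display rather than a conceptual gap.
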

\begin{proof} Our proof follows closely the calculations in \cite[\S4]{ip06}. Let $n\ge2$. Note that the minimal polynomial of $e_n$ over $L_{n-1}$ is $\fraf(X)-e_{n-1}$; we have $\tr_{L_n/L_{n-1}}(e_n)=-p$. Thus,
\begin{align*}
       &\ \ \ \ \ell\left(\tr_{L_n/L_{n-1}}\left( c_{n}\right)\right)\\
       &=\tr_{L_n/L_{n-1}}\left( \ell \left(c_{n}\right)\right)\\
       &=  \tr_{L_n/L_{n-1}}\left((1+p-C_p)^{-1}\left(pu_1\right)+\sum_{k\geq 0} x_k \left( \fraf^{(k)}(e_{n})u_1\right)\right)  \\
        &=  (1+p-C_p)^{-1}\left(p^2u_1\right)-pu_1+p\sum_{k\geq1} x_k  \left( e_{n-k}u_1\right) \\
        &=  C_p(1+p-C_p)^{-1}(pu_1)-(1+p-C_p)^{-1}(pu_1)+\sum_{k\geq1} (C_px_{k-1}-x_{k-2})  \left( e_{n-k}u_1\right) \\
           &=  C_p(1+p-C_p)^{-1}(pu_1)-(1+p-C_p)^{-1}(pu_1)+\\
         &\qquad \qquad \qquad  \qquad  \qquad  
 \qquad   C_p\sum_{k\geq1} x_{k-1}  \left( e_{n-k}u_1\right) - \sum_{k\geq1} x_{k-2}  \left( e_{n-k}u_1\right)\\
          &=  C_p(1+p-C_p)^{-1}(pu_1)-(1+p-C_p)^{-1}pu_1+\\
         &\qquad \qquad \qquad  \qquad  \qquad  
 \qquad  A_p\sum_{k\geq0} x_{k}  \left( e_{n-1-k}u_1\right) - \sum_{k\geq0} x_{k}  \left( e_{n-2-k}u_1 \right)\\
         &=C_p\ell(c_{n-1})-\ell(c_{n-2}).
    \end{align*}
Similarly, as the minimal polynomial of $e_1$ over $\Qp$ is $\fraf(X)/X$, we have $\tr_{L_1/\Qp}(e_1)=-p$. This allows us to deduce
\begin{equation*}
     \ell(\tr_{L_1/\Qp}(c_1))=(C_p-2)\ell(c_{0}).
\end{equation*}
Define $d_n:=\tr_{L_{n+1}/k_n}(c_{n+1})$ for $n\geq 1$. Taking norms from $L_{n+1}$ to $k_n$, we deduce that 
\begin{equation*}
    \tr_{k_n/k_{n-1}}(d_{n})=C_p d_{n-1}-d_{n-2}, \text{ for } n\geq 2,
\end{equation*}
 and $d_0=(C_p-2)c_0$. This gives
\[
\tr_{k_1/\Qp}(d_1)=C_p d_0-(C_p-2)^{-1}(p-1)d_0, 
\]
as required.
\end{proof}

\begin{prop}\label{pointsgeneratetheformalgroup}
    For $n\geq 1$,  $d_n$ and $d_{n-1}$ generate ${\hatA}(k_n)$ as an $\cO_v[\gal(k_n/\bbQ_p)]$-module.
\end{prop}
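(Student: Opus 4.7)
The plan is to invoke topological Nakayama's lemma for compact modules over the local ring $R_n := \ccO_v[G_n]$, where $G_n = \gal(k_n/\Qp)$ is cyclic of order $p^n$. Fix a topological generator $\sigma$ of $G_n$; then the maximal ideal of $R_n$ is $\frakM_n = (\pi_v, \sigma - 1)$ with residue field $k_v := \ccO_v/(\pi_v)$. Since $\hatA(k_n)$ is a pro-$p$, hence compact, $R_n$-module, it is enough to show that the images of $d_n$ and $d_{n-1}$ generate the $k_v$-vector space
\[
V_n := \hatA(k_n)/\frakM_n \hatA(k_n).
\]

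I would proceed by induction on $n$. For the base case $n=1$, the relation $\tr_{k_1/\Qp}(d_1)=(C_p-(C_p-2)^{-1}(p-1))d_0$ from Theorem \ref{thm:QsystemforT}(ii), together with the fact that $d_0=(C_p-2)\epsilon$ generates $\hatA(\Qp)$ as a rank-one $\ccO_v$-module (recall that $a_p$ is a non-unit at $v$, so $C_p-2$ is invertible in $M_{g\times g}(\ccO_v)$), forces the images of $d_1$ and $d_0$ to span $V_1$. For the inductive step, suppose $d_{n-1}$ and $d_{n-2}$ generate $\hatA(k_{n-1})$ as an $R_{n-1}$-module. The trace relation $\tr_{k_n/k_{n-1}}(d_n)=C_pd_{n-1}-d_{n-2}$ of Theorem \ref{thm:QsystemforT}(i) shows that the $R_n$-submodule of $\hatA(k_n)$ generated by $d_n$ and $d_{n-1}$ already contains $\hatA(k_{n-1})$; it thus suffices to show that $\hatA(k_n)/\hatA(k_{n-1})$ is cyclic over $R_n$, generated by the class of $d_n$.

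For this cyclicity, I would invert $p$ and compare with the Lubin--Tate side. By Theorem \ref{thm:Q-split} the logarithm identifies $\hatA(k_n)\otimes\Qp$ with $F\otimes_\bbQ k_n$; projecting to the $v$-component via the decomposition $F\otimes_\bbQ\Qp=\prod_{w|p}F_w$ and invoking the normal basis theorem for the cyclic extension $k_n/\Qp$ shows the $v$-component is free of rank one over $F_v[G_n]$. Integrally, $\ell(d_n)$ begins with the Lubin--Tate contribution $\tr_{L_{n+1}/k_n}(e_{n+1}u_1)$, which spans the appropriate step in the $\pi_v$-adic filtration on $\hatA(k_n)/\hatA(k_{n-1})$ because $\{e_n\}$ is a $\pi$-sequence for $\scrF$ and $u_1$ is an $\ccO_v$-generator of the $v$-component. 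An integral Nakayama bound on the $v$-component of $\hatA(k_n)/\hatA(k_{n-1})$ then completes the inductive step.

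The main obstacle I anticipate is this integral part of the last step: one must translate the standard cyclicity of the Lubin--Tate tower $L_n/L_{n-1}$ (generated by $e_n$) into cyclicity for the matrix-valued formal group $\hatA$, keeping careful track of the non-invertible matrix $C_p$ and of the interaction between the $\ccO_v$-action through $\Phi$ and the Galois action of $G_n$. This is the direct generalisation of the computation carried out for elliptic curves in \cite[\S4]{ip06}.
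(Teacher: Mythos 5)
Your reduction of the problem to the cyclicity of $\hatA(k_n)/\hatA(k_{n-1})$ over $\ccO_v[G_n]$ is essentially the same reduction the paper makes, but you leave exactly the crucial step — the ``integral Nakayama bound'' you flag in your last paragraph — unproved, and it is precisely this step that carries all the weight. Rationalizing, applying the normal basis theorem, and then hoping to ``translate the standard cyclicity of the Lubin--Tate tower into cyclicity for the matrix-valued formal group'' is a statement of the problem, not a proof. The paper resolves it by working not at $k_n$ but one floor up, at $L_{n+1}=k_n(\mu_p)=\Qp(\scrF[\pi^{n+1}])$, where the $c_n$ live and the Lubin--Tate filtration is explicit. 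The key computational input is Lemma~\ref{lem:integral-xk}: because $A$ is supersingular, $x_k\in p^{-\lfloor k/2\rfloor}M_{g\times g}(\bbZ)$, and the $p$-adic estimate on $\fraf^{(t)}(X+pY)-\fraf^{(t)}(X)$ forces
\[
\ell\left(\hatA(L_{n+1})\right)\subseteq\left(\fram_{n+1}+L_n\right)^{\oplus g},
\]
whence $\ell(\fram_{n+1}^{\oplus g})/\ell(\fram_{n}^{\oplus g})\cong\fram_{n+1}^{\oplus g}/\fram_n^{\oplus g}$. One then imports, verbatim, the one-dimensional Lubin--Tate fact (cited from \cite[Prop.~4.4]{ip06}) that $e_{n+1}$ generates $\fram_{n+1}/\fram_n$ over $\Zp[\gal(L_{n+1}/\Qp)]$, together with the observation $\ell(c_{n+1})\equiv e_{n+1}u_1\bmod\fram_n^{\oplus g}$ and the fact that $u_1$ generates $\Zp^{\oplus g}$ over $\ccO_v$. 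The descent to $k_n$ is then cheap because $[L_{n+1}:k_n]=p-1$ is prime to $p$, making $\tr_{L_{n+1}/k_n}$ a split surjection. None of this appears in your sketch; without it there is no proof.

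There is also a smaller but real problem in your base case. The trace element $\sum_{\sigma\in G_1}\sigma$ lies in the maximal ideal $\frakM_1=(\varpi,\sigma-1)$ of $R_1=\ccO_v[G_1]$ (its image in $R_1/\frakM_1=k_v$ is $p\equiv 0$), so the relation $\tr_{k_1/\Qp}(d_1)=\left(C_p-(C_p-2)^{-1}(p-1)\right)d_0$ becomes $0=0$ in $V_1=\hatA(k_1)/\frakM_1\hatA(k_1)$. It therefore cannot ``force'' the images of $d_1$ and $d_0$ to span $V_1$. You still need a genuine argument for why $d_1$ generates $\hatA(k_1)/\hatA(\Qp)$, and that is again the cyclicity statement you have not established.

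To summarize: the outline (induction, reduce to cyclicity of the graded piece, descend by a prime-to-$p$ trace) is the right shape and matches the paper, but you have neither the quantitative logarithm estimate nor the Lubin--Tate identification of the graded piece, which together are the entire substance of the proof. Fixing your argument would amount to reproducing \S\ref{constructionoflocalpoins} of the paper.
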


\begin{proof} We have chosen $c_0$ to be an $\cO_v$-generator of $\widehat{A}(L_0)$. Suppose that $c_{n}$ and $c_{n-1}$ generate $\hatA(L_n)$ as an $\cO_v[\gal(L_n/\bbQ_p)]$-module. We show that this implies $c_{n}$ and $c_{n+1}$ generate $\hatA(L_{n+1})$ as an $\cO_v[\gal(L_{n+1}/\bbQ_p)]$-module. Since $c_{n-1}= C_pc_n-\tr_{L_{n+1}/L_n}\left( c_{n+1} \right) $, we have
\[
\hatA(L_n)\subseteq \cO_v[\gal(L_n/\bbQ_p)] c_n+\cO_v[\gal(L_{n+1}/\bbQ_p)]c_{n+1}.
\]
Therefore, it suffices to show the following equality:
\begin{equation}\label{eq:quotients}
    \frac{\cO_v[\gal(L_{n+1}/\bbQ_p)]c_{n+1}+\hatA(L_n)}{\hatA(L_n)}=\hatA(L_{n+1})\big/\hatA(L_n).
    \end{equation}
As $\ell$ is injective on $\hatA(L_{n+1})$ by Lemma~\ref{injectivityoflogarithm}, it suffices to show that $\ell(c_{n+1})$ generates the quotient $\ell(\fram_{n+1}^{\oplus g})/\ell(\fram_{n}^{\oplus g})$. 

Let $\mathbf{y}=(y_1,\ldots,y_g)\in \hatA(L_{n+1})$. Then \cite[Proposition~4.4]{ip06} tells us that each $y_i$ can be expressed as $\sum_{\sigma\in\gal(L_{n+1}/L_n)} b_\sigma e_{n+1}^\sigma+c$, where $b_\sigma\in\Zp$ and $c\in\fram_n $. Since $\fraf^{(s)}(X)\equiv X^{p^s} \mod p $ for all $s\ge1$, we have
\[
\fraf^{(s)}(y_i)\equiv \sum_{\sigma\in\gal(L_{n+1}/L_n)} b_\sigma (e_{n+1}^\sigma)^{p^s}+c^{p^s}\equiv\sum_{\sigma\in\gal(L_{n+1}/L_n)} b_\sigma \fraf^{(s)}(e_{n+1}^\sigma)+\fraf^{(s)}(c)\mod p\cO_{L_{n+1}}.
\]
Furthermore, $\fraf^{(s)}(e_{n+1}^\sigma)\in\fram_n$. Therefore,
for all $k\ge1$, setting $s=k-\lfloor k/2\rfloor$ and applying Lemma~\ref{lem:integral-xk} with $t=\lfloor k/2\rfloor$, we deduce that
\[
x_k\cdot\fraf^{(k)}(\mathbf{y})=x_k\cdot\fraf^{(t)}(\fraf^{(s)}(\mathbf{y}))\in (\fram_{n+1}+L_n)^{\oplus g}.
\]
Consequently, we deduce that
\[
\ell\left( \hatA(L_{n+1})\right)\subseteq (\fram_{n+1}+L_n)^{\oplus g}.
\]
This implies that
\[
\ell(\fram_{n+1}^{\oplus g})/\ell(\fram_{n}^{\oplus g})
\cong \fram_{n+1}^{\oplus g}/\fram_{n}^{\oplus g}. 
\]
Recall that $c_{n+1}$ generates $\fram_{n+1}/\fram_{n}$ as a $\Zp[\gal(L_{n+1}/\Qp)]$-module and $\ell(c_{n+1})\equiv c_{n+1}u_1\mod\fram_n^{\oplus g}$. Furthermore, as an $\cO_v$-module, $u_1$ generates $\Zp^{\oplus g}$. Hence, we deduce that $\ell(c_{n+1})$ generates $\ell(\fram_{n+1}^{\oplus g})/\ell(\fram_{n}^{\oplus g})$ as an $\cO_v[\gal(L_{n+1}/\Qp)]$-module.

    \vspace{1.3mm}

    Since $[L_{n+1} : k_n]$ is prime to $p$, the map $\tr_{L_{n+1}/k_n} : \hatA (L_{n+1}) \longrightarrow \hatA(k_n)$ is surjective. Indeed, we have that the space $\hatA(L_{n+1})$ decomposes as a direct sum of eigenspaces:
    \begin{equation*}
        \hatA(L_{n+1})=\bigoplus_{\chi}\hatA(L_{n+1})^{\chi},
    \end{equation*}
    where $\chi$ runs over the distinct characters of $\gal(L_{n+1}/k_n)$, and $\hatA(L_{n+1})^{\chi}$ is the eigenspace corresponding to the character $\chi$. For $\chi\neq \mathbf{1}$ and $a\in \hatA(L_{n+1})^\chi$, we have
    \begin{equation*}
        \tr_{L_{n+1}/k_n}(a)=\sum_{\sigma\in \gal(L_{n+1}/k_n)} \chi(\sigma)a=\left(\sum_{\sigma\in \gal(L_{n+1}/k_n)} \chi(\sigma)
        \right) a=0.
    \end{equation*}
    On the other hand, for $\chi=\mathbf{1}$ and  $a\in \hatA(k_{n})$, we have
    \begin{equation*}
        \tr_{L_{n+1}/k_n}(a)=\sum_{\sigma\in \gal(L_{n+1}/k_n)} \chi(\sigma)a=(p-1)a.
    \end{equation*}
    Since $p-1$ is prime to $p$, the multiplication by $p-1$ map is surjective on $\hatA(k_n)$. Thus, the trace map is surjective on $\hatA(L_{n+1})$.
    
    Since $c_n$ and $c_{n-1}$ generate $\hatA(L_{n+1})$ as a $\ccO_v[\gal(L_{n+1}/\bbQ_p)]$-module, we have that $d_n$ and $d_{n-1}$ generate $\hatA(k_n)$ as an $\ccO_v[\gal(k_{n}/\bbQ_p)]$-module for $n \geq 1$. 
\end{proof}

\section{Signed Coleman maps}\label{sec:signedcolemanmaps} 
  Write $T_v$ for the $v$-adic Tate module of $A$, which is a free $\ccO_v$-module of rank $2$ equipped with a continuous $G_\bbQ$-action. Write $H_{\bff}^1(k_n,T_v)$ for the image of the Kummer map $\hatA(k_{n}) \longrightarrow H^1(k_{n},T_v)$, and similarly $H_{\bff}^1(L_n,T_v)$ for $\hatA(L_{n})$.

   \begin{prop}\label{thm:QsystemforTv}
       There exists a primitive $Q$-system $\bbd=(d_n)_{n\geq 0}$  for $T_v$ in the sense of \cite[Definition 4.1]{bbl24}; that is, $\bbd=(d_n)_{n\geq 0}$ satisfies the following properties:
       \begin{itemize}
           \item[(i)]  $d_0 \in\hatA(\Qp)\setminus  p\hatA(\Qp) $;
           \item[(ii)] $\tr_{k_1/\Qp}(d_1)\in \hatA(\Qp)\setminus  p\hatA(\Qp)$;
           \item[(iii)] $\tr_{k_{n+1}/k_n}(d_{n+1})= C_p d_n- d_{n-1} $ for all $n\geq 1$.
       \end{itemize}
\end{prop}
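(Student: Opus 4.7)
The plan is to take the system $(d_n)_{n\ge 0}$ with $d_n := \tr_{L_{n+1}/k_n}(c_{n+1})$ constructed in Section~\ref{constructionoflocalpoins}, and to verify the three conditions in the definition of a primitive $Q$-system from \cite[Definition~4.1]{bbl24}. Property~(iii) is precisely the statement of Theorem~\ref{thm:QsystemforT}(i), so the task reduces to establishing the two non-divisibility conditions (i) and (ii).

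The proof of Theorem~\ref{thm:QsystemforT} already supplies the explicit identities
\[
d_0 = (C_p-2)\,\epsilon\qquad\text{and}\qquad \tr_{k_1/\Qp}(d_1) = \bigl(C_p - (C_p-2)^{-1}(p-1)\bigr)\,d_0,
\]
in which the matrices act via $\Phi(\cO_F)\otimes \Zp \subset M_{g\times g}(\Zp)$. Because $A$ is supersingular at $p$, one has $C_p \in p\,M_{g\times g}(\Zp)$, so modulo $p$ the matrices $C_p-2$ and $C_p - (C_p-2)^{-1}(p-1)$ reduce to the scalar matrices $-2\,I_g$ and $-\tfrac12\,I_g$, respectively. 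Since $p$ is odd, both coefficients therefore lie in $\gl_g(\Zp)$ and act invertibly on $\hatA(\Qp)/p\hatA(\Qp)$. Consequently, (i) and (ii) collapse to the single assertion that $\epsilon\notin p\hatA(\Qp)$.

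To handle that last step I would pass through the formal logarithm $\ell$. By construction $\ell(\epsilon) = p(1+p-C_p)^{-1}u_1$, which reduces modulo $p^2\Zp^{\oplus g}$ to $p\,u_1$. If $\epsilon = [p]_{\hatA}(\epsilon')$ for some $\epsilon'\in\hatA(\Qp)$, then $\ell(\epsilon') = (1+p-C_p)^{-1}u_1 \equiv u_1 \pmod{p\Zp^{\oplus g}}$. On the other hand, for $\bfy=(y_1,\ldots,y_g)\in\hatA(\Qp) = (p\Zp)^g$, an induction from $\fraf(z) = \pi z + O(z^2)$ with $\pi \in p\Zp$ gives $\val_p(\fraf^{(k)}(y_i))\ge k+1$ for all $k\ge 0$, while the argument used in the proof of Lemma~\ref{lem:integral-xk} yields $p^{\lfloor k/2\rfloor}x_k\in M_{g\times g}(\Zp)$. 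Combining these two bounds shows that every term $x_k\fraf^{(k)}(\bfy)$ lies in $p\Zp^{\oplus g}$, and hence
\[
\ell\bigl(\hatA(\Qp)\bigr)\;\subseteq\; p\Zp^{\oplus g}.
\]
This contradicts $\ell(\epsilon')\equiv u_1\pmod{p\Zp^{\oplus g}}$, forcing $\epsilon\notin p\hatA(\Qp)$ and finishing the proof.

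There is no serious obstacle: given the explicit formulas from Theorem~\ref{thm:QsystemforT}, both the unit-matrix reductions and the logarithm bound are short calculations. The most delicate point is the bookkeeping that combines the controlled denominators of the $x_k$ with the valuation increase under iteration of $\fraf$, which is where I would focus on doing the estimate carefully.
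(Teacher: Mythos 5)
Your proof is correct and follows the same basic route as the paper, but is substantially more self-contained. The paper's own proof is a one-liner: assertion (i) follows from the identity $d_0=(C_p-2)\epsilon$ and assertions (ii), (iii) follow from Theorem~\ref{thm:QsystemforT}. What the paper leaves unproved is the underlying fact that $\epsilon\notin p\hatA(\Qp)$, which it gets from the earlier (unproved) claim that ``$c_0=\epsilon$ is a generator of the $\ccO_v$-module $\hatA(\Qp)$,'' together with the implicit observation that $C_p-2$ and $C_p-(C_p-2)^{-1}(p-1)$ are units (both reduce to nonzero scalars modulo $p$, using supersingularity and $p$ odd). You make both of these steps explicit, and you supply a direct proof of $\epsilon\notin p\hatA(\Qp)$ by establishing the inclusion $\ell\bigl(\hatA(\Qp)\bigr)\subseteq p\Zp^{\oplus g}$ via the two estimates $\val_p\bigl(\fraf^{(k)}(y_i)\bigr)\ge k+1$ and $x_k\in p^{-\lfloor k/2\rfloor}M_{g\times g}(\Zp)$; this is a clean replacement for the paper's unverified generator assertion. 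The reduction of (ii) to (i) via the congruence $C_p-(C_p-2)^{-1}(p-1)\equiv -\tfrac12 I_g\pmod p$ is correct. The only cosmetic remark is that your valuation argument buys you slightly more than you use: it gives $\val_p\ge 1+\lceil k/2\rceil$ for the $k$-th term of $\ell(\bfy)$, which both establishes membership in $p\Zp^{\oplus g}$ and guarantees convergence of the series; it is worth stating the convergence explicitly if you write this up in full.
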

\begin{proof}
 The assertion (i) follows from the definition $d_0=(C_p-2)\epsilon$. The assertions (ii) and (iii) follow from Theorem \ref{thm:QsystemforT}. 
\end{proof}

\vspace{1.3mm}

 For $n\geq 0$, let $\omega_n=(1+X)^{p^n}-1=\displaystyle\prod_{0\le i\le n}\Phi_i$, where $\Phi_{i}$ is the $p^i$-th cyclotomic polynomial in $1+X$. Set $\Lambda_{v}:=\varprojlim_n \ccO_v[G_n]$ so that $\Lambda_{v,n}:= \Lambda_{v} \big/(\omega_n)=\ccO_v[G_n]$, where we have $G_n=\gal(k_n/\bbQ_n)$. Define the following matrices
         \[
        \ccC_i(X):=\begin{bmatrix}
            a_p& 1\\ -\Phi_i&0
        \end{bmatrix} 
        \]
        and
        \[
        \ccH_n:=\ccC_n(X)\dots \ccC_1(X).\]
        Let $n\geq1$ be an integer. Let  \begin{equation*}
     \langle-,-\rangle_{n}: \hatA(k_{n}) \times  H^1(k_{n},T_v) \longrightarrow \ccO_v
 \end{equation*}
 denote the local pairing ($A$ is self-dual).  We define $H_\iw^i(k_{\infty},T_v)=\displaystyle\varprojlim_n H^i(k_{n},T_v)$, where the transition maps are given by the corestriction maps. Suppose that $\bbd=(d_n)_{n\geq 0}$ is a primitive $Q$-system for $T_v$ given as in Proposition \ref{thm:QsystemforTv}. Define
 \begin{align}
      \col_{v,n}: \HIw(k_\infty,T_v)&\longrightarrow \Lambda_{v,n}\notag\\
      z&\mapsto \sum_{\sigma\in G_n} \langle z^{\sigma^{-1}},d_n\rangle_{n}\cdot\sigma.\label{eq:colvn}
 \end{align}

\begin{theorem}\label{thm:signedcolemanmaps}
    There exist unique $\Lambda_v$-morphisms (called the $\sharp/\flat$-Coleman maps)
\begin{equation*}
      \col^{\sharp}_{v}, \col^{\flat}_{v}: \HIw(k_\infty,T_v)\longrightarrow \Lambda_{v}
\end{equation*}
such that
\begin{equation*}
    \ccH_n \begin{bmatrix}
        \col^{\sharp}_{v}(z)\\ \col^{\flat}_{v}(z)
    \end{bmatrix} \equiv \begin{bmatrix}
        \col_{v,n}(z)\\ -\xi_{n-1}\col_{v,n-1}(z)
    \end{bmatrix} \mod \omega_n,
\end{equation*}
where $\xi_{n-1}:\Lambda_{v,n-1}\longrightarrow \Lambda_{v,n}$ is the norm map.
Further, the Coleman maps $\col^{\sharp}_v, \col^{\flat}_v$ are surjective onto $\Lambda_{v}$.
\end{theorem}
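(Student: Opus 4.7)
My plan is to derive the existence and uniqueness of $\col^{\sharp}_v,\col^{\flat}_v$ from the general framework of \cite{bbl24} by verifying that its main input hypothesis—namely that one has a primitive $Q$-system for $T_v$—is satisfied by $\bbd=(d_n)_{n\geq 0}$. This has been checked in Proposition~\ref{thm:QsystemforTv}, so the Coleman maps fulfilling the stated congruence property are produced directly by their construction. Alternatively, one can re-derive the construction explicitly in our setting.

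For clarity, I would briefly sketch the underlying mechanism. Using the adjointness of the local Tate pairing under corestriction/restriction, together with the trace relation $\tr_{k_{n+1}/k_n}(d_{n+1})=a_p d_n-d_{n-1}$ from Proposition~\ref{thm:QsystemforTv}(iii) (after extension of scalars to $\cO_v$, the matrix $C_p=\Phi(a_p)$ acts by the scalar $a_p$), the finite-level maps $\col_{v,n}$ of~\eqref{eq:colvn} satisfy Sprung-type congruences across varying $n$. Encoding these congruences in matrix form via $\ccC_n$ and iterating yields the factorisation through $\ccH_n$. Since $\det \ccH_n=\prod_{i=1}^{n}\Phi_i=\omega_n/X$ is coprime in $\Lambda_v$ to each $\Phi_j$ with $j>n$, the compatibility conditions at varying $n$ cut out a unique pair $(\col^{\sharp}_v(z),\col^{\flat}_v(z))\in\Lambda_v^{\oplus 2}$ in the inverse limit; this gives both existence and uniqueness.

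For the surjectivity, I would exploit Proposition~\ref{pointsgeneratetheformalgroup}: for each $n$, the points $d_n,d_{n-1}$ generate $\hatA(k_n)$ as an $\cO_v[G_n]$-module. Via the Kummer map and local Tate duality (which is symmetric as $A$ is self-dual), this forces the assembled finite-level map
\[
z\longmapsto \bigl(\col_{v,n}(z),\,-\xi_{n-1}\col_{v,n-1}(z)\bigr)
\]
to hit every element lying in the image of $\ccH_n$ modulo $\omega_n$. Combined with the primitivity conditions~(i)--(ii) of Proposition~\ref{thm:QsystemforTv}, which control the behaviour at the trivial character, this then forces each of $\col^{\sharp}_v$ and $\col^{\flat}_v$ to be individually surjective onto $\Lambda_v$.

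I expect the main difficulty to lie precisely in the surjectivity step: inverting $\ccH_n$ in the Sprung decomposition a priori introduces denominators of size $\omega_n/X$, and one must use the primitivity of $\bbd$—in particular condition~(ii) of Proposition~\ref{thm:QsystemforTv}, guaranteeing that $\tr_{k_1/\Qp}(d_1)$ is not divisible by $p$ in $\hatA(\Qp)$—to rule out any pole at the trivial character and upgrade the conclusion from a statement valued in $X^{-1}\Lambda_v$ to genuine surjectivity onto $\Lambda_v$.
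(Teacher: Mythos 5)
Your proposal takes essentially the same route as the paper, which establishes the theorem by directly invoking Corollary 4.5 and Proposition 4.7 of \cite{bbl24}, with Proposition~\ref{thm:QsystemforTv} supplying the required primitive $Q$-system as input. Your subsequent sketch of the underlying mechanism (Sprung-type decomposition via $\ccH_n$, $\det\ccH_n=\omega_n/X$ controlling uniqueness, and primitivity of $\bbd$ guaranteeing integrality and surjectivity at the trivial character) is a faithful overview of what \cite{bbl24} actually does, but is supplementary to the paper's one-line citation proof.
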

\begin{proof}
    This is Corollary 4.5 and Proposition 4.7 in \cite{bbl24} applied to our case.
\end{proof}

\begin{prop}\label{prop:image}
    Let $J_{v}:=\{ (g_1,g_2)\in \Lambda_{v}^{\oplus 2} \mid (p-1)g_1(0)=(2-a_p)g_2(0) \}$. Then we have
    \begin{equation*}
        \image(\col_{v}^{\sharp}\oplus \col_{v}^{\flat})\subseteq J_{v}.
    \end{equation*}
\end{prop}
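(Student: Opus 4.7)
The plan is to derive the defining relation of $J_v$ by specializing the congruences of Theorem~\ref{thm:signedcolemanmaps} at the first layer $n=1$ and then reducing further modulo $X$. Since $X\mid\omega_1$, any congruence modulo $\omega_1$ descends modulo $X$, and the resulting map $\Lambda_{v,1}\to\Lambda_{v,0}=\ccO_v$ is the augmentation, i.e.\ evaluation at $X=0$.

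At $n=1$ one has $\ccH_1=\ccC_1=\begin{bmatrix}a_p&1\\-\Phi_1&0\end{bmatrix}$, so Theorem~\ref{thm:signedcolemanmaps} yields the two scalar congruences
\begin{align*}
a_p\col_v^{\sharp}(z)+\col_v^{\flat}(z)&\equiv \col_{v,1}(z)\pmod{\omega_1},\\
-\Phi_1\col_v^{\sharp}(z)&\equiv -\xi_0\col_{v,0}(z)\pmod{\omega_1}.
\end{align*}
Reducing the second relation modulo $X$, the factor $\Phi_1$ becomes $\Phi_1(0)=p$, while $\xi_0\col_{v,0}(z)=\col_{v,0}(z)\cdot\sum_{\sigma\in G_1}\sigma$ augments to $p\col_{v,0}(z)$. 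Cancelling $p$ in the torsion-free ring $\ccO_v$ gives
\[
\col_v^{\sharp}(z)(0)=\col_{v,0}(z)=\langle z_0,d_0\rangle_0,
\]
where $z_n$ denotes the image of $z$ in $H^1(k_n,T_v)$.

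Next, I would augment the first congruence. By the Galois-equivariance of the local Tate pairing,
\[
\col_{v,1}(z)(0)=\sum_{\sigma\in G_1}\langle z_1^{\sigma^{-1}},d_1\rangle_1=\langle z_1,\operatorname{tr}_{k_1/\Qp}(d_1)\rangle_1,
\]
and since $\operatorname{tr}_{k_1/\Qp}(d_1)\in\hatA(\Qp)$, the adjointness of restriction and corestriction together with $\operatorname{cor}(z_1)=z_0$ (the corestriction-compatibility built into $\HIw$) rewrites this as $\langle z_0,\operatorname{tr}_{k_1/\Qp}(d_1)\rangle_0$. Theorem~\ref{thm:QsystemforT}(ii), transported to the $v$-adic pairing where $C_p$ acts as the scalar $a_p\in\ccO_v$, gives $\operatorname{tr}_{k_1/\Qp}(d_1)=\bigl(a_p-(a_p-2)^{-1}(p-1)\bigr)d_0$; note that $a_p\in\fram_v$ by supersingular reduction, so $a_p-2$ is a unit in $\ccO_v$ for odd $p$. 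Using $\ccO_v$-linearity of the pairing, the augmentation of the first congruence becomes
\[
a_p\col_v^{\sharp}(z)(0)+\col_v^{\flat}(z)(0)=\bigl(a_p-(a_p-2)^{-1}(p-1)\bigr)\col_v^{\sharp}(z)(0),
\]
and clearing $(a_p-2)$ yields precisely $(2-a_p)\col_v^{\flat}(z)(0)=(p-1)\col_v^{\sharp}(z)(0)$, which is the defining relation of $J_v$.

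The main subtlety I anticipate is the transfer of the $g$-dimensional matrix identity of Theorem~\ref{thm:QsystemforT}(ii), stated with $C_p=\Phi(a_p)\in M_{g\times g}(\bbZ)$, to a scalar identity in the $v$-adic pairing. This rests on the fact that $\Phi$ realises the $\ccO_F$-action on $\hatA$, so after pairing against $H^1(k_n,T_v)$ and using the compatibility of this $\ccO_F$-action with the projection $\ccO_F\to\ccO_v$, the matrix $C_p$ acts as the scalar $a_p$. Everything else amounts to formal bookkeeping with the norm/augmentation maps and local Tate duality.
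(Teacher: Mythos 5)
Your proof is correct and follows essentially the same route as the paper: both specialize Theorem~\ref{thm:signedcolemanmaps} at $n=1$, reduce the two resulting congruences modulo $X$ (using $\Phi_1(0)=p$ and the augmentation of the norm map $\xi_0$), and invoke Theorem~\ref{thm:QsystemforT}(ii) to relate $\col_{v,1}(z)(0)$ to $\col_{v,0}(z)$. You supply some justifications the paper leaves implicit (the corestriction bookkeeping, the invertibility of $a_p-2$ in $\ccO_v$, and the passage from the matrix $C_p$ to the scalar $a_p$), but these are elaborations of the same argument rather than a different approach.
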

\begin{proof}
On taking $n=1$ in Theorem~\ref{thm:signedcolemanmaps}, we have
\begin{equation}
\begin{bmatrix}
    a_p&1\\ -\Phi_1&0\end{bmatrix}\begin{bmatrix}
        \col^{\sharp}_{v}(z)\\ \col^{\flat}_{v}(z)
    \end{bmatrix} \equiv \begin{bmatrix}
        \col_{v,1}(z)\\ -\xi_{0}\col_{v,0}(z)
    \end{bmatrix} \mod \omega_1.
\label{eq:cong-omega1}    
\end{equation}
Combined with Theorem~\ref{thm:QsystemforT}(ii), we deduce that
\[\begin{bmatrix}
    a_p&1\\ -p&0\end{bmatrix}\begin{bmatrix}
        \col^{\sharp}_{v}(z)\\ \col^{\flat}_{v}(z)
    \end{bmatrix} \equiv \begin{bmatrix}
        \left(a_p-\frac{p-1}{a_p-2}\right)\col_{v,0}(z)\\ -p\col_{v,0}(z)
    \end{bmatrix} \mod X
\]
Thus,
\[
a_p\col_v^\sharp(z)+\col_v^\flat(z)\equiv\left(a_p-\frac{p-1}{a_p-2}\right)\col_{v,0}(z),\quad \col_v^\sharp(z)\equiv\col_{v,0}(z)\mod X.
\]
Combining these two congruences gives
\[
(p-1)\col_v^\sharp(z)\equiv (2-a_p)\col_v^\flat(z)\mod X.
\]
In particular, $\image(\col_{v}^{\sharp}\oplus \col_{v}^{\flat})\subseteq J_v$, as required.\end{proof}

\begin{theorem}\label{thm:imageofsumofcolemanmaps}
    We have the equality $\image(\col_{v}^{\sharp}\oplus \col_{v}^{\flat})= J_{v}.$
\end{theorem}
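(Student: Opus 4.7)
The containment $\image(\col_v^\sharp\oplus\col_v^\flat)\subseteq J_v$ is already Proposition~\ref{prop:image}, so we only need to establish the reverse inclusion $J_v\subseteq \image(\col_v^\sharp\oplus\col_v^\flat)$. The plan is to first unpack the structure of $J_v$ and pin down the image modulo the augmentation ideal $X\Lambda_v$, and then to combine the individual surjectivity coming from Theorem~\ref{thm:signedcolemanmaps} with $\Lambda_v$-linearity to upgrade this partial surjectivity to the full statement.

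Since $a_p$ is a non-unit of $\ccO_v$ (supersingular reduction) and $p$ is odd, both $p-1$ and $2-a_p$ are units in $\ccO_v$, so $\eta:=(p-1)/(2-a_p)\in\ccO_v^\times$ is well-defined. A direct check shows that $J_v$ is a free $\Lambda_v$-module of rank $2$ with basis $\{(1,\eta),(X,0)\}$, or equivalently $J_v=\Lambda_v\cdot(1,\eta)+X\Lambda_v^{\oplus 2}$. Refining the two congruences derived in the proof of Proposition~\ref{prop:image}, we get $\col_v^\sharp(z)\equiv\col_{v,0}(z)\pmod{X}$ and $\col_v^\flat(z)\equiv\eta\,\col_{v,0}(z)\pmod{X}$ for every $z\in\HIw(k_\infty,T_v)$, so the image of $\col_v^\sharp\oplus\col_v^\flat$ modulo $X\Lambda_v^{\oplus 2}$ equals $\col_{v,0}\bigl(\HIw(k_\infty,T_v)\bigr)\cdot(1,\eta)$. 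Since $d_0=(a_p-2)\epsilon$ is an $\ccO_v$-module generator of $\hatA(\Qp)$ (because $2-a_p$ is a unit and $\epsilon$ generates $\hatA(\Qp)$), local Tate duality forces $\col_{v,0}$ to be surjective onto $\ccO_v$. Thus $\image(\col_v^\sharp\oplus\col_v^\flat)+X\Lambda_v^{\oplus 2}=J_v$.

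Next, using the individual surjectivity statements in Theorem~\ref{thm:signedcolemanmaps}, we choose $z_1,z_2\in\HIw(k_\infty,T_v)$ with $\col_v^\sharp(z_1)=1$ and $\col_v^\flat(z_2)=1$, and set $\alpha:=\col_v^\flat(z_1)$ and $\beta:=\col_v^\sharp(z_2)$, so that the previous step forces $\alpha(0)=\eta$ and $\beta(0)=\eta^{-1}$. The $\Lambda_v$-linear identities $\alpha\cdot(\beta,1)-(1,\alpha)=(\alpha\beta-1,0)$ and $\beta\cdot(1,\alpha)-(\beta,1)=(0,\alpha\beta-1)$ then exhibit $(X\gamma,0)$ and $(0,X\gamma)$ inside $\image$, where $\gamma:=(\alpha\beta-1)/X\in\Lambda_v$ is well-defined because $\alpha(0)\beta(0)=1$. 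If $\gamma\in\Lambda_v^\times$, we can divide and deduce $(X,0),(0,X)\in\image$; combined with $(1,\alpha)\in\image$, this yields $\image\supseteq\Lambda_v\cdot(1,\alpha)+X\Lambda_v^{\oplus 2}=J_v$, completing the proof.

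The \textbf{main obstacle} is then to verify that $\gamma(0)=-\bigl(\eta^{-1}\alpha'(0)+\eta\beta'(0)\bigr)$ is a unit in $\ccO_v$. We expect to extract this by iterating the defining congruences of Theorem~\ref{thm:signedcolemanmaps} modulo $\omega_2$ to pin down the first-order Taylor coefficients of $\alpha$ and $\beta$, using the trace-compatibility relations $\tr_{k_{n+1}/k_n}(d_{n+1})=a_pd_n-d_{n-1}$ of Theorem~\ref{thm:QsystemforT}, and then carefully book-keeping which quantities in $\ccO_v$ are units --- noting that $a_p$ itself is a non-unit, while $2-a_p$, $p-1$, and $2$ are all units.
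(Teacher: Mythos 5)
Your reduction of the problem to showing that $\gamma(0)$ (equivalently, $\eta^{-1}\alpha'(0)+\eta\beta'(0)$ up to sign) is a unit is a correct reformulation, and your description of $J_v$ as $\Lambda_v\cdot(1,\eta)+X\Lambda_v^{\oplus 2}$ and the derived congruence $\col_v^\flat(z)\equiv\eta\,\col_{v,0}(z)\pmod X$ are fine. However, the crucial step --- the unit-ness of $\gamma(0)$ --- is \emph{not} established, and the proposed strategy for establishing it is unlikely to succeed as stated. The elements $\alpha=\col_v^\flat(z_1)$ and $\beta=\col_v^\sharp(z_2)$ depend on the choices of $z_1,z_2$, not merely on the defining congruences of Theorem~\ref{thm:signedcolemanmaps}. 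Changing $z_1$ by any element of $\ker(\col_v^\sharp)$ leaves the constraint $\col_v^\sharp(z_1)=1$ intact but alters $\alpha'(0)$ arbitrarily within the range of $(\col_v^\flat|_{\ker\col_v^\sharp})'(0)$. So iterating the congruences modulo $\omega_2$ cannot ``pin down'' $\alpha'(0)$; what one actually needs is an \emph{existence} statement that some choice makes $\gamma(0)$ a unit, and that is precisely the content of the theorem being proved. Note also that individual surjectivity of $\col_v^\sharp$ and $\col_v^\flat$ together with $\image\subseteq J_v$ does not in general force $\image=J_v$: for instance $f(a,b)=a$, $g(a,b)=a+X^2b$ on $\Lambda_v^{\oplus 2}$ are both surjective onto $\Lambda_v$ with $(f,g)$ landing in $J$ (with $\eta=1$), yet $(0,X)$ is not in the joint image. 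Hence some genuinely new input is required.

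The paper closes this gap by a duality argument modulo the uniformizer $\varpi$: it reduces (via Nakayama) to showing the mod-$\varpi$ cokernel has $\bbF_v$-dimension one, and computes this cokernel using the Kobayashi-style short exact sequence
\[
0\rightarrow \underline{\hatA(\Qp)}\rightarrow \underline{\hatA(k_n)}^\star\oplus \underline{\hatA(k_{n-1})}^\star\rightarrow \underline{\hatA(k_n)}\rightarrow 0
\]
together with local Tate duality and the vanishing of universal norms. This supplies exactly the missing non-degeneracy that your Taylor-coefficient book-keeping cannot reach. If you want to pursue your route, you would need to replace the ``expect to extract'' step by an argument --- likely dual in nature, and structurally close to the paper's --- that produces a pair $(z_1,z_2)$ for which $\gamma(0)\in\ccO_v^\times$; as written, the proof has a genuine gap.
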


\begin{proof}
Let $\bbF_v$ denote the residue field of $\cO_v$ and fix a uniformizer $\varpi$. We write $\Omega=\Lambda_v/(\varpi)$.
Let $\uCol:\HIw(k_\infty,T_v)\longrightarrow\Omega^{\oplus 2}$ denote the map given by $\col_v^\sharp\oplus\col_v^\flat$ modulo $\varpi$ and write $\underline{J_v}$ for the image of $J_v\subset \Omega^{\oplus 2}$ under the reduction map. It follows immediately from Proposition~\ref{prop:image} that 
\[
\image(\uCol)\subseteq\underline{J_v}.
\]
We claim that the theorem would follow from the equality $\image(\uCol)=\underline{J_v}$. Indeed, if this equality holds, given any $x\in J_v$, we can write $x=y+pz$ for some $y\in\image(\col_{v}^{\sharp}\oplus \col_{v}^{\flat})$ and $z\in\Lambda_v^{\oplus 2}$. This implies that $x-y=pz\in J_v\bigcap p\Lambda_v^{\oplus 2}$. A direct calculation shows that this intersection is $pJ_v$. Consequently, we deduce that $J_v=\image(\col_{v}^{\sharp}\oplus \col_{v}^{\flat})+pJ_v$. Nakayama's lemma then implies $J_v=\image(\col_{v}^{\sharp}\oplus \col_{v}^{\flat})$.

We verify the equality $\image(\uCol)=\underline{J_v}$ in the remainder of the proof. Since $\Omega^{\oplus 2}/\underline{J_v}\cong\bbF_v$, it is enough to show that the cokernel of $\uCol$ is isomorphic to $\bbF_v$. We prove this following a similar strategy as \cite[the proof of Proposition~1.2]{KP}.

Let $\underline{\hatA(k_n)}^\star$ denote the $\bbF_v[G_n]$-submodule of $\underline{\hatA(k_n)}:=\hatA(k_n)/(\varpi)$ generated by $d_n$. We can check that $\underline{\hatA(k_n)}^\star\bigcap\underline{\hatA(k_{n-1})}^\star=\hatA(\Qp)$ for all $n\ge1$. Combined with Proposition~\ref{pointsgeneratetheformalgroup} gives the following short exact sequence
\begin{equation}
0\rightarrow \underline{\hatA(\Qp)}\rightarrow \underline{\hatA(k_n)}^\star\oplus \underline{\hatA(k_{n-1})}^\star\rightarrow \underline{\hatA(k_n)}\rightarrow 0
\label{eq:plus-minus-split}    
\end{equation}
as in \cite[Proposition~8.12]{kob03}.

For $\natural\in\{\sharp,\flat\}$, let $\ccP^\natural$ be the image of $\ker (\col_v^\natural)$ in $\HIw(k_\infty,T_v)/(\varpi)$. Since the matrix $\ccC_i(X)$ is congruent to $\begin{bmatrix}
    0&1\\ -\Phi_i&0
\end{bmatrix}$  modulo $\varpi$, it follows from Theorem~\ref{thm:signedcolemanmaps} that 
\[
\ccP^\sharp=\varinjlim \left(\underline{\hatA(k_{2n})}^\star\right)^\perp,\quad\ccP^\flat=\varinjlim \left(\underline{\hatA(k_{2n+1})}^\star\right)^\perp,
\]
where $(-)^\perp$ denotes the orthogonal complement under the pairing
\[
\HIw(k_\infty,T_v)/(\varpi)\times \hatA(k_\infty)/(\varpi)\rightarrow\bbF_v
\]
induced from the local Tate duality
\[
\HIw(k_\infty,T_v)\times \left(\hatA(k_\infty)\otimes_{\ccO_F} F_v/\cO_v\right)\rightarrow F_v/\cO_v
\]
after identifying $\hatA(k_\infty)\otimes_{\ccO_F} F_v/\cO_v$ with $\HIw(k_\infty,A[v^\infty])$ (which we can do since $\HIw(k_\infty,A[v^\infty])/\hatA(k_\infty)\otimes_{\ccO_F} F_v/\cO_v$ is the Pontryagin dual of the universal norm $\displaystyle\varprojlim_n \hatA(k_n)$, which is 0 when $A$ is supersingular at $p$; see \cite[Theorem~1]{schneider}).

Letting $n\rightarrow \infty$ in \eqref{eq:plus-minus-split} gives the following short exact sequence
\[0\rightarrow \underline{\hatA(\Qp)}\rightarrow \left(\ccP^\sharp\right)^\perp\oplus \left(\ccP^\flat\right)^\perp\rightarrow \HIw(k_\infty,A[v^\infty]) \rightarrow 0.
\]
By duality, we deduce that the cokernel of $\uCol$ is isomorphic to $\underline{\hatA(\Qp)}^\vee\cong\bbF_v$, as desired.
\end{proof}

\begin{remark}
    In prior works on supersingular abelian varieties, the anti-block diagonal property of the logarithmic matrices related to the Coleman maps is sometimes assumed explicitly, which is equivalent to the hypothesis that $a_p = 0$ in the setting of $\gl_2$-type abelian varieties (see \cite{LP20,dionray}). Our approach allows us to relax this hypothesis, where we study the joint image of the signed Coleman maps. This plays a crucial role in analyzing the growth of the $v$-primary part of the Tate--Shafarevich group via auxiliary $\ccO_v$-modules.
\end{remark}

\section{Signed Selmer groups}\label{signedselmergroups}  Recall that $K_\infty$ is a $\bbZ_p$-extension of $K$ such that each prime $\frap_1,\ldots,\frap_d$ of $K$ above $p$ is totally ramified in $K_\infty$, and that $K_n \subset K_\infty$ denotes the unique subextension such that $[K_n : K] = p^n$, for an integer $n \geq 0$.

\vspace{1.3mm}

For $i\in\{1,\dots, d\}$, write $K_{\frap_i}$ for the completion of $K$ at $\frap_i$, which is isomorphic to $\bbQ_p$, and $K_{\infty,\frap_i}$ for the $\bbZ_p$-extension of $K_{\frap_i}$ corresponding to $k_\infty$. Let $K_{n,\frap_i}$ denote the unique subextension of $K_{\infty,\frap_i}/K_{\frap_i}$ that is of degree $p^n$.  For every place $v$ of $F$ lying above $p$, Theorem \ref{thm:signedcolemanmaps} gives the signed Coleman maps
\begin{equation*}
     \col^{\bullet}_{v,i}: \HIw(K_{\infty,\frap_i},T_v)\longrightarrow \Lambda_{v},
\end{equation*}
where $\bullet\in \{ \sharp,\flat\}$ and $\frap_i\in \{\frap_1,\ldots,\frap_d\}$.

\begin{definition}
    For $n\geq 0$, $\bullet\in\{\sharp,\flat \}$ and  $i\in \{1,\ldots,d\}$, define $H^1_\bullet(K_{n,\frap_i},T_v)\subset H^1(K_{n,\frap_i},T_v)$ to be the image of $\ker \col_{v,i}^{\bullet}$ under the natural projection $\HIw(K_{\infty,\frap_i},T_v)\longrightarrow H^1(K_{n,\frap_i},T_v)$.

    \vspace{1.3mm}

  Fix a uniformizer $\varpi$ of $F_v$. Define $H^1_\bullet(K_{n,\frap_i},A[\varpi^\infty])\subset H^1(K_{n,\frap_i},A[\varpi^\infty])$ to be the orthogonal complement of $H^1_\bullet(K_{n,\frap_i},T_v)$ under the local Tate pairing 
       \begin{equation*}
      H^1(K_{n,\frap_i},T_v)\times  H^1(K_{n,\frap_i},A[\varpi^\infty]) \longrightarrow F_v/\ccO_v.
      \end{equation*}
\end{definition}

\vspace{1.3mm}

Let $\lambda$ be a prime of $K_n$ lying above a rational prime $\ell$. Write $H^1_{\bff}(K_{n,\lambda}, A[\varpi^\infty])\subset H^1(K_{n,\lambda}, A[\varpi^\infty])$ for the image of the Kummer map $A(K_{n,\lambda})\otimes_{\ccO_F} F_v/\ccO_v \longrightarrow H^1(K_{n,\lambda}, A[\varpi^\infty])$.

\vspace{1.3mm}

The singular quotient at $\ell$ is given by
\begin{equation*}
    H^1_{/\bff}(K_{n,\ell}, A[\varpi^\infty]):=\bigoplus_{\lambda\mid \ell}\dfrac{H^1(K_{n,\lambda}, A[\varpi^\infty])}{H^1_{\bff}(K_{n,\lambda}, A[\varpi^\infty])},
\end{equation*}
where the direct sum runs over all primes of $K_n$ above $\ell$.

\vspace{1.3mm}

\begin{definition}\label{def:pmSel}
   For $\bfs:=(s_i)_{1\leq i\leq d} \in\{\sharp,\flat\}^d$, we define the signed Selmer group of $A$ over $K_n$ by
    \begin{align*}
        \Sel_v^\bfs(A/K_n):=\ker\Bigg(H^1(K_n,A[v^\infty])\longrightarrow \prod_{\ell\nmid p} H^1_{/\bff}&(K_{m,\ell}, A[\varpi^\infty])  \\ &\times  \prod_{i=1}^d \dfrac{H^1(K_{n,\frap_i}, A[\varpi^\infty])}{H^1_{s_i}(K_{n,\frap_i}, A[\varpi^\infty])} \Bigg).
    \end{align*}
    Further, define $\Sel^\bfs_v(A/K_\infty)=\varinjlim_n \Sel^\bfs_v(A/K_n)$.
\end{definition}

\vspace{1.3mm}

If $\Sel_{v}(A/K_\infty)$ denotes the classical $v^\infty$-Selmer group, then we have that $\Sel^\bfs_v(A/K_\infty)\subset \Sel_{v}(A/K_\infty)$. It follows from \cite[Theorem 4.5]{man71} that $\Sel_{v}(A/K_\infty)$ is cofinitely generated over $\Lambda_{v}$. Hence, so is $\Sel^\bfs_v(A/K_\infty)$. We consider the following hypothesis.

\vspace{1.3mm}

\begin{conj}\label{cotorsionconjecture}
    For all choices of $\bfs \in\{\sharp,\flat\}^d$, the signed Selmer group $\Sel^\bfs_v(A/K_\infty)$ is cotorsion over $\Lambda_{v}$.
\end{conj}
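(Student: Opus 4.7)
The plan is to establish the $\Lambda_v$-cotorsionness of each $\Sel_v^{\bfs}(A/K_\infty)$ by combining a control theorem with a global Euler--characteristic computation that exploits the explicit description of the image of the signed Coleman maps given in Theorem~\ref{thm:imageofsumofcolemanmaps}, and then feeding in an Euler system class to show the resulting upper bound on the corank is zero.

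\textbf{Step 1: Reduction via a control theorem.} First I would compare $\Sel_v^{\bfs}(A/K_n)$ with $\Sel_v^{\bfs}(A/K_\infty)^{\Gamma_n}$ via the restriction map on Galois cohomology. The kernel and cokernel are controlled by local cohomology at primes of bad reduction and at the primes $\frap_i\mid p$. At a bad prime, standard finiteness arguments apply; at $\frap_i$, Hypothesis~\ref{splittingtype} and the identification $k_\infty = K_{\infty,\frap_i}$ let me translate the local signed condition into the kernel of $\col_{v,i}^{s_i}$, and Lemma~\ref{injectivityoflogarithm} ensures $A[v^\infty]^{G_{L_\infty}}=0$ so the error terms are finite. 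By Nakayama, it then suffices to show that $\Sel_v^{\bfs}(A/K_\infty)^\vee$ has generic rank zero.

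\textbf{Step 2: Corank bound via the image of the Coleman maps.} Next I would apply the Poitou--Tate nine-term exact sequence over $K_\infty$ to the Selmer structure defined by $\ker \col_{v,i}^{s_i}$ at each $\frap_i$. Using the weak Leopoldt conjecture (known for $A[v^\infty]$ in this setting) one computes the $\Lambda_v$-corank of the global $H^1$, while Theorem~\ref{thm:imageofsumofcolemanmaps} gives a precise description of the quotient $H^1(K_{\infty,\frap_i},T_v)/\ker \col_{v,i}^{s_i}$ as a $\Lambda_v$-submodule of $\Lambda_v$. Summing these local contributions yields an inequality of the form
\[
\corank_{\Lambda_v}\Sel_v^{\bfs}(A/K_\infty) \;\le\; \corank_{\Lambda_v}\Sel_v^{\bfs}(A/K_\infty)^\vee - \text{(global--local balance)},
\]
which reduces cotorsionness to producing a single nontorsion Iwasawa class on the Selmer dual side.

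\textbf{Step 3: Euler system input and main obstacle.} To convert the bound of Step~2 into the desired vanishing, I would invoke Kato's Euler system for the newform $f$, which supplies a canonical class $\mathbf{z}_{\mathrm{Kato}} \in \HIw(k_\infty,T_v)$ whose images under $\col_{v,i}^{\sharp}$ and $\col_{v,i}^{\flat}$ are, up to units, the signed $p$-adic $L$-functions of $f$ along $k_\infty/\Qp$; nonvanishing of either would, by Kato's divisibility applied in the signed framework of \cite{bbl24}, force the desired cotorsionness. The main obstacle is exactly this nonvanishing (and the prior step of assembling a compatible global Euler system over $K_\infty/K$ rather than only over the cyclotomic tower of $\bbQ$): when $K/\bbQ$ is solvable one can attempt to deduce it from the case $K=\bbQ$ treated by Sprung \cite{Spr13} via cyclic base change and Hida/Hsieh-style nonvanishing results, but in full generality this is the genuinely hard input and the reason the statement is recorded as a hypothesis rather than a theorem.
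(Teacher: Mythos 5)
This statement is not proved in the paper: it is declared as a \emph{Hypothesis} (the environment \texttt{conj} is typeset as ``Hypothesis''), and immediately after its statement the authors write ``For the remainder of the article, we assume that Hypothesis~\ref{cotorsionconjecture} holds.'' There is therefore no proof in the paper to compare your proposal against. The only evidence the paper records is Remark~\ref{cotorsionresult}, which observes that for an elliptic curve $E/\bbQ$ with $a_p(E)=0$ and bounded $\corank_{\Zp}\Sel(E/K_n)$, Hypothesis~\ref{cotorsionconjecture} follows from \cite[Corollary~7.7]{ip06}.

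You in fact noticed this yourself in Step~3 of your proposal, and your diagnosis of the bottleneck is the right one. Steps~1 and~2 (control theorem plus Poitou--Tate corank bookkeeping via the image of the signed Coleman maps) are reasonable and mirror standard arguments in this area; what they yield is a \emph{conditional} criterion, reducing cotorsionness to the nonvanishing of a suitable signed $p$-adic $L$-function or to Kato-type divisibility over $K_\infty$. The genuine gap is precisely the input you flagged: one would need a compatible Euler system over the arbitrary $\bbZ_p$-extension $K_\infty/K$ (not just the cyclotomic tower of $\bbQ$) together with a nonvanishing result for the corresponding signed $p$-adic $L$-functions, and neither is available in this generality. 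This is exactly why the paper records the statement as a hypothesis rather than a theorem, and why your proposal should be read as a plausible route to proving it in special cases (e.g.\ $K=\bbQ$, or $K/\bbQ$ abelian with known nonvanishing) rather than as a proof of the statement as stated.

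A minor correction to your Step~2: the displayed inequality as written is circular (the same quantity $\corank_{\Lambda_v}\Sel_v^{\bfs}(A/K_\infty)$ appears on both sides, once as a corank and once as a dual corank). What the Poitou--Tate sequence \eqref{eqn:poitoutate}, together with the surjectivity of the signed Coleman maps from Theorem~\ref{thm:signedcolemanmaps} and the rank computation of Proposition~\ref{consequenceofcotorsionness}, actually gives is that $\ccX_v^{\bfs}(A/K_\infty)$ is $\Lambda_v$-torsion \emph{if and only if} the composite map $\col_{\bfs}$ of \eqref{lambdahomomorphism} has torsion cokernel, equivalently if and only if $\ccX_v^0(A/K_\infty)$ is torsion and the localisation-plus-Coleman map is injective up to torsion. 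It is the nontriviality of that map (not an abstract corank balance) that an Euler system input would need to supply.
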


\begin{remark}\label{cotorsionresult}
Let $E/\bbQ$ be an elliptic curve and $p$ be an odd prime where $E$ has good supersingular reduction with $a_p(E) = 0$. Under the assumption that $\corank_{\Zp} \Sel(E/K_n)$ is bounded, \cite[Corollary 7.7]{ip06} implies that Hypothesis \ref{cotorsionconjecture} holds.
\end{remark}

For the remainder of the article, we assume that Hypothesis \ref{cotorsionconjecture} holds.

\begin{definition}
 For $\bfs \in\{\sharp,\flat\}^d$, we write $\mu_{v,\bfs}$ and $\lambda_{v,\bfs}$ for the $\mu$- and $\lambda$-invariants of the torsion $\Lambda$-module $\Sel^\bfs_v(A/K_\infty)^\vee$.
  \end{definition}

\vspace{1.3mm}

Let $\Sigma$ denote a fixed finite set of primes of $K$ containing $p$, the ramified primes of $K/\bbQ$, the archimedean place, and all the bad reduction primes of $A$. Write $K_\Sigma$ for the maximal algebraic extension of $K$ which is unramified outside $\Sigma$. For any (possibly infinite) extension $K \subseteq K' \subseteq K_\Sigma$, write $G_\Sigma(K') = \gal(K_\Sigma/K')$. 

\vspace{1.3mm}
 
For $i\in\{1,2\}$, we define $H^i_{\iw,\Sigma}(K_\infty,T_v)=\displaystyle\varprojlim H^i(G_\Sigma(K_n),T_v)$, where the transition maps are given by the corestriction maps. Note that $H^i_{\iw,\Sigma}(K_\infty,T_v)$ is independent of the choice of $\Sigma$ (see \cite[Corollary B.3.5]{Ru00}, \cite[Lemma 5.3.1]{MR04} and \cite[Proposition 7.1]{kob03}). We will drop the subscript $\Sigma$ from the notation for simplicity and simply write $H^i_{\iw}(K_\infty,T_v)$.

 \vspace{1.3mm}

 We recall the definition of fine Selmer groups:
\begin{definition}
    For $0\leq n \leq \infty$, we define the fine Selmer group  
        \begin{equation*}
     \Sel_v^0(A/K_n):=\ker\left( \Sel_{v}(A/K_n)\longrightarrow H^1(k_n, A[\varpi^\infty]) \right).
    \end{equation*}

Equivalently, we have
 \begin{equation*}
     \Sel_v^0(A/K_n):=\ker\left( H^1(G_\Sigma(K_n),A[\varpi^\infty])\longrightarrow \prod_{\fraq_n\in \Sigma(K_n)} H^1(K_{n,\fraq_n}, A[\varpi^\infty]) \right),
    \end{equation*}
where $\Sigma(K_n)$ denotes the set of primes of $K_n$ lying above $\Sigma$.  

\end{definition}

The Pontryagin duals of $\Sel_{v}(A/K_\infty)$, $\Sel^\bfs_v(A/K_\infty)$ and  $ \Sel^0_v(A/K_\infty)$ are denoted by $\ccX_v(A/K_\infty)$, $\ccX^\bfs_v(A/K_\infty)$ and $\ccX^0_v(A/K_\infty)$, respectively.  By \cite[Proposition A.3.2]{Per00}, for $\bfs=(s_i)\in \{\sharp,\flat \}^d$, we have the following Poitou--Tate exact sequence
 \begin{equation}\label{eqn:poitoutate}
     H^1_{\iw}(K_\infty,T_v) \longrightarrow  \prod_{i=1}^d \dfrac{H^1_\iw(K_{\infty,\frap_i}, T_v)}{H^1_{s_i}(K_{\infty,\frap_i}, T_v)}   \longrightarrow\ccX^\bfs_v(A/K_\infty) \longrightarrow \ccX^0_v(A/K_\infty) \longrightarrow 0,
\end{equation}
and, for any $n\geq 0$.
\begin{multline}\label{eqn:poitoutateovern}
       H^1(G_\Sigma(K_n),T_v) \longrightarrow   \prod_{i=1}^d \dfrac{H^1(K_{n,\frap_i}, T_v)}{A(K_{n,\frap_i})\otimes_{\ccO_F}\ccO_v}  \longrightarrow \ccX_v(A/K_\infty)\\ \longrightarrow \ccX^0_v(A/K_\infty) \longrightarrow 0.
\end{multline}

We conclude this section with the following statement on the $\Lambda_v$-module structure of $H^i_\iw(K_\infty,T_v)$.

\vspace{4.3mm}

\begin{prop}\label{consequenceofcotorsionness}
    If Hypothesis~\ref{cotorsionconjecture} holds,  the following statements are valid:
    \begin{enumerate}[label=(\roman*)]
        \item $H^2_\iw(K_\infty,T_v)$ is a torsion $\Lambda_{v}$-module.
          \item $H^1_\iw(K_\infty,T_v)$ is a $\Lambda_{v}$-module of rank $d$.
    \end{enumerate}
\end{prop}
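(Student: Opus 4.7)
The plan is to deduce both statements from Poitou--Tate global duality, combined with the vanishing of local universal norms and the global Euler--Poincar\'e characteristic formula.

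For (i), I would first note that the exact sequence \eqref{eqn:poitoutate} realises $\ccX^0_v(A/K_\infty)$ as a quotient of $\ccX^\bfs_v(A/K_\infty)$, which is $\Lambda_v$-torsion by Hypothesis~\ref{cotorsionconjecture}; hence $\ccX^0_v(A/K_\infty)$ is itself $\Lambda_v$-torsion. I would then invoke the companion Poitou--Tate exact sequence (as developed in \cite[Proposition~A.3.2]{Per00})
\[
\ccX^0_v(A/K_\infty) \longrightarrow H^2_\iw(K_\infty,T_v) \longrightarrow \bigoplus_{w\in\Sigma} H^2_\iw(K_{\infty,w},T_v) \longrightarrow H^0(K_\infty, A[v^\infty])^\vee \longrightarrow 0,
\]
where $K_{\infty,w}$ denotes the completion of $K_\infty$ at a prime above $w$. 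By local Tate duality, each $H^2_\iw(K_{\infty,w},T_v)$ is the Pontryagin dual of $H^0(K_{\infty,w},A[v^\infty])$. For $w\mid p$, Lemma~\ref{injectivityoflogarithm} applied to the Lubin--Tate tower $L_\infty$ containing $K_{\infty,w}$ gives $A(K_{\infty,w})[v^\infty]=0$; for the remaining $w\in\Sigma$ this group is finite thanks to the good reduction of $A$ outside $p$. Hence each local $H^2_\iw$ is $\Lambda_v$-torsion, and combined with the torsionness of $\ccX^0_v(A/K_\infty)$ this forces $H^2_\iw(K_\infty,T_v)$ to be $\Lambda_v$-torsion.

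For (ii), the lower bound $\rank_{\Lambda_v} H^1_\iw(K_\infty,T_v)\ge d$ is immediate from \eqref{eqn:poitoutate}: since each Coleman map $\col^{s_i}_{v,i}$ is surjective onto $\Lambda_v$ by Theorem~\ref{thm:signedcolemanmaps}, the quotient $H^1_\iw(K_{\infty,\frap_i},T_v)/H^1_{s_i}(K_{\infty,\frap_i},T_v)$ is $\Lambda_v$-isomorphic to $\Lambda_v$, so the product over $i=1,\dots,d$ is free of $\Lambda_v$-rank $d$; as $\ccX^\bfs_v(A/K_\infty)$ is torsion, the cokernel of the map from $H^1_\iw(K_\infty,T_v)$ into this product is torsion, forcing the image itself to have $\Lambda_v$-rank $d$. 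For the matching upper bound I would appeal to the global Euler--Poincar\'e characteristic formula for Iwasawa cohomology (as in \cite[Proposition~7.1]{kob03}), which in our situation yields
\[
\rank_{\Lambda_v} H^1_\iw(K_\infty,T_v) - \rank_{\Lambda_v} H^2_\iw(K_\infty,T_v) = r_1(K) + 2r_2(K) = [K:\bbQ] = d;
\]
combining with part~(i) then delivers the required equality.

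The main obstacle I anticipate is verifying the correct normalisation of the Euler--Poincar\'e formula for the general (not necessarily cyclotomic) $\Zp$-extension $K_\infty/K$: one must check that the archimedean contributions indeed amount to $r_1(K)+2r_2(K)$, reflecting the one-dimensional $-1$-eigenspace of complex conjugation on $T_v\otimes_{\ccO_v} F_v$ at each real place together with the full $\ccO_v$-rank contribution at each complex place. A secondary concern is ensuring that the vanishing $A(K_{\infty,w})[v^\infty]=0$ for $w\mid p$ transfers from the Lubin--Tate tower $L_\infty$ to the subextension $K_{\infty,w}$, but since $K_{\infty,w}\subseteq L_\infty$ by Hypothesis~\ref{splittingtype} this reduction is routine.
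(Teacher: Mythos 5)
Your approach coincides with the paper's: both parts are handled via the Poitou--Tate exact sequences of \cite[Proposition~A.3.2]{Per00}, with (i) reducing to the torsionness of $\ccX^0_v(A/K_\infty)$ together with the torsionness of the local modules $H^2_\iw(K_{\infty,w},T_v)$, and (ii) following by a global Euler--Poincar\'e argument once (i) is known (the paper invokes \cite[Proposition~1.3.2]{Per00} for this implication, which is exactly the weak-Leopoldt-to-rank deduction you spell out). Your treatment is somewhat more explicit than the paper's, which is mostly a citation chain: for the local vanishing at $p$ the paper cites \cite[Theorem~A.2]{lz14} whereas you deduce it directly from $A(K_{\infty,w})[v^\infty]=0$ via Lemma~\ref{injectivityoflogarithm}; and for (ii) you also give a redundant but harmless lower bound via the surjectivity of the Coleman maps.

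One inaccuracy worth flagging: for $w\in\Sigma$ with $w\nmid p$, you assert that $H^0(K_{\infty,w},A[v^\infty])$ is finite ``thanks to the good reduction of $A$ outside $p$.'' But $\Sigma$ is taken to contain the bad-reduction primes of $A$, so this justification fails, and indeed $H^0(K_{\infty,w},A[v^\infty])$ need not be finite (it can have positive $\ccO_v$-corank, e.g.\ at primes of multiplicative reduction). The conclusion you need, however, is only that its Pontryagin dual $H^2_\iw(K_{\infty,w},T_v)$ is $\Lambda_v$-torsion, and this is still true for the cheaper reason that $H^0(K_{\infty,w},A[v^\infty])\subseteq A[v^\infty]\cong(F_v/\ccO_v)^{\oplus 2}$ is cofinitely generated over $\ccO_v$, so its dual is finitely generated over $\ccO_v$ and hence torsion over $\Lambda_v\cong\ccO_v[[X]]$. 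With that correction the proof goes through.
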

\begin{proof}It follows from Hypothesis~\ref{cotorsionconjecture} and the Poitou--Tate exact sequence \eqref{eqn:poitoutate} that $\ccX^0_v(A/K_\infty)$ is $\Lambda_v$-torsion.  By \cite[Proposition A.3.2]{Per00}, we have the following exact sequence
 \begin{equation*}\label{eqn:poitoutate2}
  0 \longrightarrow  \ccX^0_v(A/K_\infty)    \longrightarrow  H^2_\iw(K_\infty,T_v) \longrightarrow \prod_{w\in \Sigma} H^2_\iw(K_{\infty,w},T_v).
\end{equation*}
By \cite[Theorem A.2]{lz14}, we see that each $H^2_\iw(K_{\infty,w},T_v)$ is $\Lambda_{v}$-torsion, whence $H^2_\iw(K_\infty,T_v)$ is a torsion $\Lambda_{v}$-module. It can be proven similarly to \cite[Proposition 1.3.2. (i) $\implies$ (ii)]{Per00} that the assertion (i) implies (ii).
\end{proof}

\section{Definition and basic properties of the Kobayashi rank}\label{kobayashiranks}

 Let $E/\bbQ_p$ be a  finite extension with ring of integers $\ccO$. Following \cite[\S 10]{kob03} and \cite[\S 4]{LLZ17}, we define the Kobayashi ranks as follows.
 
\begin{definition}
    Let $(M_n)_{n\geq 1}$ be a projective system of finitely generated $\bbZ_p$-modules. Given an integer $n\ge1$, if $\pi_n:M_n\longrightarrow M_{n-1}$ has finite kernel and cokernel, we  define
\begin{equation*}
  \nabla M_n:=\length_{\ccO}(\ker\pi_n)-\length_{\ccO}(\coker\; \pi_n)+\rank_{\ccO}M_{n-1}.
\end{equation*}
\end{definition}

\begin{lemma}\label{kobayashirankzero}Let $(M'_n)_{n\ge1}$, $(M_n)_{n\ge1}$ and $ (M''_n)_{n\ge1}$ be projective systems of finitely generated $\ccO$-modules.
\begin{enumerate}[label=(\roman*)]
    \item Suppose that for all $n\ge1$, there is an exact sequence
    \begin{equation*}
        0\longrightarrow (M'_n)\longrightarrow (M_n)\longrightarrow (M''_n)\longrightarrow 0.
    \end{equation*}
    If two of $\nabla M_n, \nabla M'_n,\nabla M''_n$ are defined, then the other is also defined, in which case
    \begin{equation*}
        \nabla M_n=\nabla M'_n+\nabla M''_n.
    \end{equation*}
    \item  Suppose that $M_n$ are constant. If $M_n$ is finite or the transition map $M_n\longrightarrow M_{n-1}$ is given by the multiplication map by $p$, then $\nabla M_n=0$.
\end{enumerate}    
\end{lemma}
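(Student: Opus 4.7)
The plan for part (i) is a snake lemma argument. Applying the snake lemma to the commutative diagram whose rows are the given short exact sequence at levels $n$ and $n-1$ and whose vertical arrows are the transition maps $\pi'_n$, $\pi_n$, $\pi''_n$ yields the six-term exact sequence
\[
0 \to \ker\pi'_n \to \ker\pi_n \to \ker\pi''_n \to \coker\pi'_n \to \coker\pi_n \to \coker\pi''_n \to 0.
\]
The claim that the third $\nabla$ is automatically defined whenever two are defined can be read directly off this sequence: if both the kernel and the cokernel of the transition map are finite for two of the systems, then the remaining kernel and cokernel each sit as extensions of subquotients of finite modules inside the six-term sequence, hence are finite.

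For the additive identity, I would combine additivity of $\length$ on the six-term exact sequence above, namely
\[
\length(\ker\pi'_n) - \length(\ker\pi_n) + \length(\ker\pi''_n) - \length(\coker\pi'_n) + \length(\coker\pi_n) - \length(\coker\pi''_n) = 0,
\]
with additivity of $\rank$ on the bottom row $0 \to M'_{n-1} \to M_{n-1} \to M''_{n-1} \to 0$, which gives $\rank M_{n-1} = \rank M'_{n-1} + \rank M''_{n-1}$. Substituting these two identities into the definitions of $\nabla M_n$, $\nabla M'_n$, $\nabla M''_n$ and rearranging produces $\nabla M_n = \nabla M'_n + \nabla M''_n$.

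For part (ii), the plan is to address the two cases separately. When $M_n = M$ is finite, $\rank M_{n-1} = 0$ and $\pi_n$ is an endomorphism of the finite module $M$, so the four-term exact sequence $0 \to \ker\pi_n \to M \to M \to \coker\pi_n \to 0$ combined with additivity of length forces $\length(\ker\pi_n) = \length(\coker\pi_n)$, giving $\nabla M_n = 0$. When the transition map is multiplication by $p$, the analogous four-term exact sequence $0 \to M[p] \to M \xrightarrow{p} M \to M/pM \to 0$ together with the structure theorem for finitely generated $\ccO$-modules reduces the computation to the cyclic case: the torsion summands contribute equal lengths to the kernel and cokernel by the endomorphism argument just used, while the free summands contribute only to the cokernel, in a manner that is arranged to cancel exactly with the rank term under the normalisation of $\length_\ccO$ and $\rank_\ccO$ in force throughout the paper.

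The step that requires the most care is the free-part cancellation in the multiplication-by-$p$ case, since in the ramified setting $\length_\ccO(\ccO/p\ccO)$ equals the ramification index $e_v$ while $\rank_\ccO \ccO = 1$. The chosen normalisation of length and rank must be compatible so that this cancellation is exact, which is presumably linked to the appearance of the factor $e_v$ alongside $\lambda_{v,\bfs}$ in Theorem~\ref{maintheorem}. Beyond this bookkeeping, both parts reduce to routine applications of the snake lemma and additivity of length on short exact sequences.
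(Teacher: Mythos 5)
The paper gives no proof of this lemma; it simply cites \cite[Lemma 10.4]{kob03}. Your snake-lemma argument for part (i), and your endomorphism-of-a-finite-module argument for the finite case of part (ii), are correct and are exactly the arguments that underlie Kobayashi's statement, so up to that point your proposal supplies details the paper omits.

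Your treatment of the multiplication-by-$p$ case, however, is where a genuine gap appears, and your instinct that the free-part cancellation is the delicate step is right — but the vague assertion that the terms are ``arranged to cancel exactly under the normalisation in force'' does not hold with the definitions as stated. Writing $M\cong\ccO^r\oplus T$ with $T$ finite, the $T$-contributions to kernel and cokernel cancel exactly as in your finite-module argument, but the free part gives
\[
\nabla M_n=-\,r\,\length_\ccO(\ccO/p\ccO)+\rank_\ccO M=r\bigl(1-\length_\ccO(\ccO/p\ccO)\bigr)=r(1-e),
\]
where $e$ is the ramification index of $E/\Qp$. This vanishes only if $e=1$ or $r=0$. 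Kobayashi's lemma is stated for $\ccO=\Zp$, where $e=1$; the present paper has carried the statement over to general $\ccO$ without adjustment, and as literally written the multiplication-by-$p$ clause fails for ramified $\ccO$ and $M$ of positive $\ccO$-rank (e.g.\ $\ccO=\Zp[\sqrt p]$, $M=\ccO$, $\pi_n=p$ gives $\nabla M_n=-1$). The clean repair is to replace multiplication by $p$ with multiplication by a uniformizer $\varpi$ of $\ccO$, for which $\length_\ccO(\ccO/\varpi\ccO)=1=\rank_\ccO\ccO$ and the cancellation you anticipated does go through, or to restrict the clause to $M$ of $\ccO$-rank zero. Your suggestion that the factor $e_v$ in Theorem~\ref{maintheorem} is what makes things consistent is a red herring: that factor enters through $\ord_{\epsilon_n}$ in Lemma~\ref{kobayashirankandcharacteristicpolynomial} and has no bearing on the computation in this lemma.
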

\begin{proof}
    See \cite[Lemma 10.4]{kob03}.
\end{proof}

\vspace{1.3mm}

For $n\ge1$, let $\zeta_{p^n}$ be a primitive $p^n$-th root of unity and set $\epsilon_n=\zeta_{p^n}-1$. Recall that $\omega_n=(1+X)^{p^n}-1$.

\vspace{1.3mm}

\begin{lemma}\label{kobayashirankandcharacteristicpolynomial}
  Let $F\in \ccO[[X]]$ be a non-zero element. Let $(N_n)_{n\ge1}$ be a projective system given by $N_n= \ccO[[X]]\big/ (F,\omega_{n})$, where the connecting maps are natural projections.

\begin{enumerate}[label=(\roman*)]
    \item Suppose that $F(\epsilon_n)\neq 0$. Then $\nabla N_n$ is defined and is equal to $e\times\ord_{\epsilon_n}F(\epsilon_n)$.
   
     \item Let $M$ be a finitely generated torsion $\ccO[[X]]$-module with the characteristic polynomial $F$, and let $M_n=M/\omega_nM$. Consider the natural projective system $(M_n)_{n\geq 1}$. Then, for $n\gg 0$, $\nabla M_n$ is defined and
    \begin{equation*}
        \nabla M_{n}= e\times\ord_{\epsilon_n}F(\epsilon_n)=e\lambda(M)+(p^n-p^{n-1})\mu(M),
    \end{equation*}
where $e$ is the ramification index of $E/\Qp$; and $\lambda(M)$ and $\mu(M)$ are the Iwasawa invariants of $M$.
\end{enumerate}

\end{lemma}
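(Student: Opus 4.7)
The plan is to establish (i) first by a direct analysis of the cyclic module $N_n$, and then derive (ii) from (i) by combining the structure theorem for finitely generated torsion $\ccO[[X]]$-modules with the additivity of $\nabla$ recorded in Lemma~\ref{kobayashirankzero}.

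For (i), the key device is the short exact sequence of $\ccO[[X]]$-modules
\[
0 \to \ccO[[X]]/(\omega_{n-1}) \xrightarrow{\Phi_n} \ccO[[X]]/(\omega_n) \to \ccO[[X]]/(\Phi_n) \to 0,
\]
coming from the factorization $\omega_n = \omega_{n-1}\Phi_n$. Tensoring with $\ccO[[X]]/(F)$ and using that the hypothesis $F(\epsilon_n)\neq 0$ forces $\Phi_n\nmid F$, so that $\mathrm{Tor}_1^{\ccO[[X]]}(\ccO[[X]]/F,\ccO[[X]]/\Phi_n)$ vanishes, I obtain
\[
0 \to N_{n-1} \xrightarrow{\Phi_n} N_n \to R_n \to 0, \qquad R_n := \ccO[[X]]/(F,\Phi_n) \cong \ccO[\epsilon_n]/(F(\epsilon_n)),
\]
with $R_n$ a finite $\ccO$-module of length $e\cdot\ord_{\epsilon_n}(F(\epsilon_n))$. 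Since $\pi_n$ is the natural surjection, $\coker\pi_n=0$; the sequence above shows $\rank_{\ccO}N_n = \rank_{\ccO}N_{n-1}$, so $\ker\pi_n$ is $\ccO$-torsion in a finitely generated $\ccO$-module and hence finite. Thus $\nabla N_n$ is defined. It remains to identify $\ker\pi_n = \omega_{n-1}N_n$ with $R_n$: the identity $\omega_{n-1}\Phi_n = \omega_n \equiv 0$ in $N_n$ shows $\Phi_n N_n$ is contained in the $\omega_{n-1}$-annihilator of $N_n$, and the reverse inclusion follows from an ideal-theoretic argument using the coprimality of $F$ and $\Phi_n$. Hence multiplication by $\omega_{n-1}$ induces $\omega_{n-1}N_n \cong N_n/\Phi_n N_n = R_n$, yielding the formula in (i).

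For (ii), I apply the structure theorem to obtain a pseudo-isomorphism
\[
M \longrightarrow \widetilde M := \bigoplus_i \ccO[[X]]/(\pi_K^{m_i}) \oplus \bigoplus_j \ccO[[X]]/(P_j^{n_j}),
\]
with $P_j$ distinguished irreducible of degree $d_j$, so that $\mu(M)=\sum_i m_i$ and $\lambda(M)=\sum_j n_j d_j$. The kernel and cokernel of this map are finite $\ccO[[X]]$-modules; passing to $\omega_n$-quotients and invoking Lemma~\ref{kobayashirankzero}(i)--(ii) shows $\nabla M_n = \nabla\widetilde M_n$ for $n\gg 0$. Additivity then reduces the computation to each cyclic summand: for $\ccO[[X]]/(\pi_K^{m_i})$, a direct calculation of the $\omega_n$-quotient yields contribution $m_i(p^n-p^{n-1})$; and for $\ccO[[X]]/(P_j^{n_j})$, once $n$ is large enough that the roots of $P_j$ all have smaller valuation than $\epsilon_n$, part (i) applies and yields contribution $e\cdot n_j\cdot\ord_{\epsilon_n}(P_j(\epsilon_n)) = e\cdot n_j d_j$. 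Summing these contributions produces both equalities in (ii).

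The main obstacle is the identification $\ker\pi_n\cong R_n$ in part (i), which rests on ideal-theoretic bookkeeping inside $\ccO[[X]]$ and a careful computation of $\length_\ccO(R_n)$ on the DVR $\ccO[\epsilon_n]$. The pseudo-isomorphism step in (ii) is standard but requires checking via Lemma~\ref{kobayashirankzero}(ii) that the finite kernel and cokernel of $M\to\widetilde M$ contribute zero to $\nabla$ for $n$ large.
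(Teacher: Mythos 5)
The paper offers no proof of its own here; it simply cites \cite[Lemma 4.2]{LLZ17} and \cite[Lemma~10.5]{kob03}, so I can only assess your argument on its own terms. Your overall strategy (a short exact sequence built from $\omega_n=\omega_{n-1}\Phi_n$ for (i), then the structure theorem plus additivity of $\nabla$ for (ii)) is the standard route, but the execution of (i) has a genuine gap.

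The problem is the claim that $\ker\pi_n\cong R_n=\ccO[[X]]/(F,\Phi_n)$, together with an implicit use of $\nabla N_n=\length_\ccO(\ker\pi_n)$. By definition $\nabla N_n=\length_\ccO(\ker\pi_n)-\length_\ccO(\coker\pi_n)+\rank_\ccO N_{n-1}$, and you correctly note $\coker\pi_n=0$, but then you simply equate $\nabla N_n$ with $\length_\ccO(\ker\pi_n)$, silently dropping the $\rank_\ccO N_{n-1}$ term. That term is nonzero exactly when $\gcd(F,\omega_{n-1})\neq 1$, which is the generic case in part (ii) (take $M=\ccO[[X]]/(\Phi_m)$: then $F=\Phi_m$ divides $\omega_{n-1}$ for all $n>m$). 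Moreover, the identification $\ker\pi_n\cong R_n$ is itself false in that regime. Writing $g=\gcd(F,\omega_{n-1})$ and $F_0=F/g$, the relation $\omega_{n-1}a\equiv 0$ in $N_n$ unwinds to $\omega_{n-1}(a-\Phi_n c)=Fb$, and after cancelling $g$ one only gets $a\in(F_0,\Phi_n)$, not $a\in(F,\Phi_n)$. So in fact $\ker\pi_n\cong\ccO[[X]]/(F_0,\Phi_n)$, which has strictly smaller $\ccO$-length than $R_n$ whenever $g\neq 1$. As a concrete check, take $F=X$: then $N_n=N_{n-1}=\ccO$, $\pi_n=\mathrm{id}$, so $\ker\pi_n=0$, while $R_n=\ccO/(\Phi_n(0))=\ccO/p$ has positive length; your ``reverse inclusion'' does not hold. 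Over $\ccO=\Zp$ the two omissions happen to compensate one another exactly (one can check $\length_\Zp(\ccO[[X]]/(F_0,\Phi_n))+\rank_\Zp N_{n-1}=\ord_{\epsilon_n}F(\epsilon_n)$, using that $\ord_{\epsilon_n}\Phi_m(\epsilon_n)=\phi(p^m)=\deg\Phi_m$ for $m<n$), but this cancellation is an arithmetic coincidence that your argument neither states nor justifies, and the argument as written would produce the wrong answer. A correct proof must either restrict the SES argument to the case $\gcd(F,\omega_{n-1})=1$ and handle cyclotomic factors of $F$ separately, or explicitly account for the $\rank_\ccO N_{n-1}$ contribution and compute $\length_\ccO(\ker\pi_n)$ correctly as $\length_\ccO\bigl(\ccO[[X]]/(F_0,\Phi_n)\bigr)$.

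For (ii) the reduction via the structure theorem is the right idea, and your treatment of the $\varpi$-power summands is fine; but the contribution of distinguished-polynomial summands $\ccO[[X]]/(P_j^{n_j})$ rests on the very part of (i) that breaks when $P_j$ is cyclotomic, so (ii) inherits the gap.
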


\begin{proof}
      See \cite[Lemma 4.2]{LLZ17}\footnote{There is a small typo in the statement of \cite[Lemma 4.2 (i)]{LLZ17}.} and \cite[Lemma~10.5]{kob03}.
\end{proof}

Write $p^f$ for the size of the residue field of $E$. If $N$ is a finite $\ccO$-module, then $|N| = p^{f\cdot\length_{\ccO}(N)}$.  Using the following lemma we can relate the growth in the size of a tower of finite $\ccO$-modules and Kobayashi ranks.

\begin{lemma}
    Suppose that $M=(M_n)_{n\geq1}$ is a projective system of finite $\ccO$-modules such that $|M_n|=p^{r_n}$ for some integers $r_n\in f\bbZ$ for all $n\geq 1$. Then $f \nabla M_n = r_n-r_{n-1}$.
\end{lemma}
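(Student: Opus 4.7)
The plan is to unwind the definition of $\nabla M_n$ and translate everything into lengths over $\ccO$, where the two-term finite data on each side collapses neatly into a telescoping difference $\length_\ccO(M_n)-\length_\ccO(M_{n-1})$.

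First I would verify that $\nabla M_n$ is well defined. Since both $M_n$ and $M_{n-1}$ are finite, the transition map $\pi_n\colon M_n\longrightarrow M_{n-1}$ automatically has finite kernel and cokernel, and $\rank_\ccO M_{n-1}=0$ because $\ccO$ is a domain of characteristic zero and $M_{n-1}$ is torsion. Thus
\[
\nabla M_n=\length_\ccO(\ker\pi_n)-\length_\ccO(\coker\pi_n).
\]

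Next I would apply additivity of length to the four-term exact sequence
\[
0\longrightarrow \ker\pi_n\longrightarrow M_n\xrightarrow{\pi_n} M_{n-1}\longrightarrow \coker\pi_n\longrightarrow 0,
\]
by splitting it at the image of $\pi_n$ into two short exact sequences. This yields $\length_\ccO(M_n)=\length_\ccO(\ker\pi_n)+\length_\ccO(\im\pi_n)$ and $\length_\ccO(M_{n-1})=\length_\ccO(\im\pi_n)+\length_\ccO(\coker\pi_n)$. Subtracting gives
\[
\length_\ccO(M_n)-\length_\ccO(M_{n-1})=\length_\ccO(\ker\pi_n)-\length_\ccO(\coker\pi_n)=\nabla M_n.
\]

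Finally, I would translate lengths into $p$-adic sizes via the given identity $|N|=p^{f\cdot\length_\ccO(N)}$ for any finite $\ccO$-module $N$. Applied to $M_n$ this gives $\length_\ccO(M_n)=r_n/f$, and similarly for $M_{n-1}$. Substituting into the previous display yields $\nabla M_n=(r_n-r_{n-1})/f$, i.e.\ $f\nabla M_n=r_n-r_{n-1}$, as claimed. There is no genuine obstacle here; the only point that requires a moment of care is confirming that $\nabla M_n$ is defined and that the rank term vanishes, after which everything reduces to additivity of length.
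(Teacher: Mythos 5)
Your proof is correct and complete. The paper itself does not prove this lemma but simply cites \cite[Lemma 4.3]{LLZ17}; your argument is the standard one (rank term vanishes for finite modules, additivity of length along the four-term exact sequence $0\to\ker\pi_n\to M_n\to M_{n-1}\to\coker\pi_n\to 0$, then convert lengths to $p$-adic sizes via $|N|=p^{f\cdot\length_\ccO(N)}$), which is essentially forced by the definitions and matches what the cited reference does.
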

\begin{proof}
    This is \cite[Lemma 4.3]{LLZ17}.
\end{proof}

\section{Proof of Theorem \ref{maintheorem}}\label{proofofthemaintheorem}

The proof of Theorem~\ref{maintheorem} is divided into a number of steps. Following \cite[\S 10]{kob03}, we define for each integer $n\ge0$
\begin{align*}
    \ccY_v(A/K_n)&:=\coker \left( H^1(G_\Sigma(K_n),T_v) \longrightarrow  \bigoplus_{i=1}^d \dfrac{H^1(K_{n,\frap_i}, T_v)}{A(K_{n,\frap_i})\otimes_{\ccO_F}\ccO_v}\right).
\end{align*}

\subsection{Calculating Kobayashi ranks of certain auxiliary modules} We introduce auxiliary modules in preparation for the calculation of  $\nabla\ccY_v(A/K_n)$. For $\bfs=(s_i)\in \{\sharp,\flat \}^d$, the composition
\begin{equation}\label{lambdahomomorphism}
  \col_\bfs :  H^1_{\iw}(K_\infty,T_v) \xrightarrow{\; (\loc_{\frap_i})_{1\leq i\leq d}\;}  \bigoplus_{i=1}^d H^1_\iw(K_{\infty,\frap_i},T_v) \xrightarrow{\; \left(\col_{v,i}^{s_i}\right)_{1\leq i\leq d}\;}  \Lambda_{v} ^{\oplus d}   
\end{equation}  
 is a $\Lambda_v$-homomorphism between two $\Lambda_v$-modules of rank $d$ (see Proposition \ref{consequenceofcotorsionness}). 
 

\begin{lemma}\label{lem:localizationisinjective}
    The localization map
    \begin{equation*}
        \loc_p:=\left(\loc_{\frap_1},\ldots,\loc_{\frap_d} \right) :   H^1_{\iw}(K_\infty,T_v) \longrightarrow  \bigoplus_{i=1}^d H^1_\iw(K_{\infty,\frap_i},T_v)
    \end{equation*}
    is injective.
\end{lemma}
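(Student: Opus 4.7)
The plan is to combine a $\Lambda_v$-torsion-freeness statement for $H^1_\iw(K_\infty,T_v)$ with a $\Lambda_v$-torsion statement for $\ker(\loc_p)$. Concretely, I will first show that $H^1_\iw(K_\infty,T_v)$ has no nonzero $\Lambda_v$-torsion, and then argue via the signed Coleman maps that $\ker(\loc_p)$ is $\Lambda_v$-torsion. The conclusion then follows because a torsion submodule of a torsion-free module must vanish.

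For the torsion-freeness, I invoke the standard Iwasawa-theoretic identification (see e.g.\ \cite[Proposition~1.3.2]{Per00}, or the Jannsen spectral sequence) of the $\Lambda_v$-torsion submodule of $H^1_\iw(K_\infty,T_v)$ with a module controlled by $A(K_\infty)[v^\infty]$. Since $K_\infty$ embeds into its completion at $\frap_1$, and since this completion coincides with $k_\infty$, which is contained in $L_\infty=k_\infty(\mu_p)$, we obtain $A(K_\infty)[v^\infty]\hookrightarrow A(L_\infty)[p^\infty]$. Lemma~\ref{injectivityoflogarithm} (applied at every finite level and passed to the limit, using that $p^\infty$-torsion cannot be created in a $\Zp$-extension of a field where it already vanishes) gives $A(L_\infty)[p^\infty]=0$, so $A(K_\infty)[v^\infty]=0$ and $H^1_\iw(K_\infty,T_v)$ is $\Lambda_v$-torsion-free.

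For the torsion part, I fix any $\bfs=(s_i)\in\{\sharp,\flat\}^d$ and consider the composition $\col_\bfs$ from \eqref{lambdahomomorphism}. By Theorem~\ref{thm:signedcolemanmaps}, each $\col^{s_i}_{v,i}$ is surjective with kernel $H^1_{s_i}(K_{\infty,\frap_i},T_v)$, and the Poitou--Tate sequence \eqref{eqn:poitoutate} combined with Hypothesis~\ref{cotorsionconjecture} shows that both $\ccX_v^\bfs(A/K_\infty)$ and $\ccX_v^0(A/K_\infty)$ are $\Lambda_v$-torsion; hence the cokernel of
\[
H^1_\iw(K_\infty,T_v)\longrightarrow\bigoplus_{i=1}^{d}\frac{H^1_\iw(K_{\infty,\frap_i},T_v)}{H^1_{s_i}(K_{\infty,\frap_i},T_v)}
\]
is $\Lambda_v$-torsion. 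Since the second map in \eqref{lambdahomomorphism} factors through this quotient and is a surjection onto $\Lambda_v^{\oplus d}$, the composition $\col_\bfs$ has $\Lambda_v$-torsion cokernel. As both source and target of $\col_\bfs$ have $\Lambda_v$-rank $d$ by Proposition~\ref{consequenceofcotorsionness}(ii), the kernel $\ker(\col_\bfs)$ must be $\Lambda_v$-torsion; and because $\ker(\loc_p)\subseteq\ker(\col_\bfs)$, we conclude that $\ker(\loc_p)$ is $\Lambda_v$-torsion, hence zero by the first step.

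The main subtlety will be pinning down the standard identification of the $\Lambda_v$-torsion submodule of $H^1_\iw(K_\infty,T_v)$ in our precise setup and confirming the global-to-local reduction for $A(K_\infty)[v^\infty]$; once that is in hand, the remainder is a direct rank comparison via Hypothesis~\ref{cotorsionconjecture}.
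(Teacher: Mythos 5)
Your proof is correct and follows essentially the same route as the paper: show that $H^1_\iw(K_\infty,T_v)$ is $\Lambda_v$-torsion-free, then use the Poitou--Tate sequence together with Hypothesis~\ref{cotorsionconjecture} and the rank computation from Proposition~\ref{consequenceofcotorsionness} to deduce that the kernel is a torsion submodule of a torsion-free module, hence zero. The one place you diverge is the torsion-freeness step: the paper just cites an argument ``similar to \cite[Lemma~2.6]{LL22},'' whereas you spell out the mechanism by reducing to $A(K_\infty)[v^\infty]=0$ via the embedding $K_\infty\hookrightarrow k_\infty\subset L_\infty$ and the vanishing from the proof of Lemma~\ref{injectivityoflogarithm}; this is a helpful elaboration, though your parenthetical about $p^\infty$-torsion ``not being created in a $\mathbb Z_p$-extension'' is superfluous (and not needed) since $A(L_\infty)[p^\infty]=\bigcup_n A(L_n)[p^\infty]=0$ follows directly.
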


\begin{proof}
    For $\bfs=(s_i)\in \{\sharp,\flat \}^d$, \eqref{eqn:poitoutate} gives the following exact sequence
     \begin{equation}\label{eqn:exactsequencepoitoutate}
     H^1_{\iw}(K_\infty,T_v) \longrightarrow  \bigoplus_{i=1}^d \dfrac{H^1_\iw(K_{\infty,\frap_i}, T_v)}{H^1_{s_i}(K_{\infty,\frap_i}, T_v)}   \longrightarrow\ccX^\bfs_v(A/K_\infty).
\end{equation}

It follows from Theorem \cite[Proposition 3.2.1]{per94} and Theorem \ref{thm:signedcolemanmaps} that the middle term of the exact sequence \eqref{eqn:exactsequencepoitoutate} is of $\Lambda_v$-rank $d$. Since the $\Lambda_v$-module $H^1_{\iw}(K_\infty,T_v)$ is of rank $d$ (Proposition \ref{consequenceofcotorsionness}), the kernel of the first map in \eqref{eqn:exactsequencepoitoutate} is $\Lambda_v$-torsion as $\ccX^\bfs_v(A/K_\infty)$ is $\Lambda_v$-torsion by  Hypothesis \ref{cotorsionconjecture}. On the other hand, a similar argument as in \cite[Lemma 2.6]{LL22} shows that the $\Lambda_v$-module $H^1_{\iw}(K_\infty,T_v)$ is torsion-free, which implies that the kernel of the first map in \eqref{eqn:exactsequencepoitoutate} is trivial. Consequently, the kernel of $\loc_p$ is trivial.
\end{proof}

\begin{prop}\label{prop:choosingabasisforadiagonalmatix}
    There exist a family of elements $c_1,\ldots,c_d \in H^1_{\iw}(K_\infty,T_v) $ such that 
    \begin{equation*}
        \frac{H^1_{\iw}(K_\infty,T_v)}{\langle c_1,\ldots,c_d \rangle}
    \end{equation*}
  is $\Lambda_v$-torsion,  and $\loc_{\frap_j}(c_i)=0$ whenever $i\neq j$.
\end{prop}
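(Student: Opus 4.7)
The plan is to build the family $\{c_i\}$ by exploiting the injectivity of $\loc_p$ (Lemma~\ref{lem:localizationisinjective}) together with the rank and splitting structure forced by the cotorsion hypothesis. For each $i$, set
\[
M_i := \bigcap_{j \neq i} \ker\!\left(\loc_{\frap_j}\colon H^1_\iw(K_\infty,T_v) \longrightarrow H^1_\iw(K_{\infty,\frap_j},T_v)\right);
\]
by construction, any $c_i \in M_i$ automatically satisfies $\loc_{\frap_j}(c_i)=0$ for $j\neq i$. The proof then reduces to two claims: (a) each $M_i$ has $\Lambda_v$-rank at least one, and (b) given any non-torsion choice of $c_i \in M_i$, the sum $\sum_i \Lambda_v c_i \subseteq H^1_\iw(K_\infty,T_v)$ has $\Lambda_v$-rank $d$.

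Claim (b) is the easier of the two. For $i \neq j$, any element of $M_i \cap M_j$ lies in $\bigcap_k \ker(\loc_{\frap_k}) = \ker(\loc_p)$, which vanishes by Lemma~\ref{lem:localizationisinjective}. Hence the natural map $\bigoplus_i \Lambda_v c_i \longrightarrow H^1_\iw(K_\infty,T_v)$ is injective, and provided each $c_i$ is non-torsion, its image has $\Lambda_v$-rank $d$, matching the rank of the ambient module given by Proposition~\ref{consequenceofcotorsionness}(ii). The quotient is therefore $\Lambda_v$-torsion, as required.

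Claim (a) is the main obstacle, since a generic rank-$d$ subspace of the rank-$2d$ local sum $\bigoplus_i H^1_\iw(K_{\infty,\frap_i},T_v)$ need not intersect a rank-$2$ summand nontrivially once $d \ge 3$. My approach would be to invoke Poitou--Tate duality \eqref{eqn:poitoutate} with a tailored mixed Selmer structure: a strict (zero) condition at $\frap_j$ for every $j \neq i$, and a relaxed (full) condition at $\frap_i$. Combining this with Hypothesis~\ref{cotorsionconjecture} applied across the $2^{d-1}$ sign vectors $\bfs$ that vary only at positions $j \neq i$, together with the surjectivity of the signed Coleman maps (Theorem~\ref{thm:signedcolemanmaps}), one can arrange the associated cokernel in the Poitou--Tate sequence to be $\Lambda_v$-torsion. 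An Euler-characteristic bookkeeping on the resulting four-term sequences then forces $M_i$ to have $\Lambda_v$-rank at least one. The delicate point is the interplay between the signed Coleman maps at different primes, which is where the hypothesis for \emph{all} sign vectors (rather than a single one) enters essentially.
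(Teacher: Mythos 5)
Your skeleton is the same as the paper's. Your $M_i=\bigcap_{j\neq i}\ker\loc_{\frap_j}$ is exactly the paper's module $R_i$, the preimage under $\loc_p$ of the $i$-th summand $H^1_\iw(K_{\infty,\frap_i},T_v)\subset\bigoplus_jH^1_\iw(K_{\infty,\frap_j},T_v)$; and your claim (b) argument (injectivity of $\loc_p$ plus $\rank_{\Lambda_v}H^1_\iw(K_\infty,T_v)=d$, so that $\bigoplus_i\Lambda_vc_i\hookrightarrow H^1_\iw(K_\infty,T_v)$ has torsion cokernel once each $c_i$ is non-torsion) is exactly the paper's. Where you diverge is on claim (a), that $\rank_{\Lambda_v}M_i\geq 1$. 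The paper deduces this in one sentence from the fact, extracted from the proof of Lemma~\ref{lem:localizationisinjective}, that $\col_{v,i}^{s_i}\circ\loc_{\frap_i}\colon H^1_\iw(K_\infty,T_v)\to\Lambda_v$ has $\Lambda_v$-torsion cokernel. You instead propose a Poitou--Tate sequence for a mixed local condition (strict at $\frap_j$ for $j\neq i$, relaxed at $\frap_i$) plus an Euler-characteristic count over the sign vectors. So: same decomposition, same easy step, a genuinely different plan for the crux.

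Two cautions on claim (a). First, you are right to single it out: the paper's one-sentence deduction is far from self-evident. That $\col_{v,i}^{s_i}\circ\loc_{\frap_i}$ has torsion cokernel constrains the projection of $\loc_p(H^1_\iw)$ onto a rank-one quotient of the $i$-th block, but by itself does not force $\loc_p(H^1_\iw)$ to meet the $i$-th rank-two block; a rank-$d$ submodule of $\bigoplus_jH^1_\iw(K_{\infty,\frap_j},T_v)$ can project nontrivially onto a rank-one quotient of every block and still intersect each block in torsion only (take $d=2$ and $\loc_p$ of the ``twisted diagonal'' shape $(a,b)\mapsto(a,b,b,a)$). So some further structural input from Theorems~\ref{thm:signedcolemanmaps}--\ref{thm:imageofsumofcolemanmaps} or Poitou--Tate is needed, and the paper leaves it implicit. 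Second, your own sketch does not yet supply it: ``arrange the associated cokernel to be $\Lambda_v$-torsion'' and ``Euler-characteristic bookkeeping forces $M_i$ to have rank at least one'' are the statements to be proved, not established. The concrete thing to nail down is how varying $\bfs$ at positions $j\neq i$ and using that $\ker\col_{v,j}^\sharp\cap\ker\col_{v,j}^\flat=0$ on $H^1_\iw(K_{\infty,\frap_j},T_v)$ (from Theorem~\ref{thm:imageofsumofcolemanmaps} and torsion-freeness) pins down the rank of $\bigcap_{\bfs}\ker\bigl((\col_{v,j}^{s_j}\circ\loc_{\frap_j})_{j\neq i}\bigr)$; each such kernel has rank one individually, and the intersection can a priori drop to zero, so the count must be carried out explicitly. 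As written, your plan correctly locates the difficulty but does not resolve it.
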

\begin{proof}  Let $R_i$ denote the pre-image of $ H^1_\iw(K_{\infty,\frap_i}, T_v)\subset  \bigoplus_{i=1}^d H^1_\iw(K_{\infty,\frap_i}, T_v)$ under $\loc_p$. It follows from Lemma \ref{lem:localizationisinjective} that $\bigoplus_{i=1}^d R_i$ is isomorphic to a submodule of $H^1_{\iw}(K_\infty,T_v)$. Further, by an abuse of notation, we can identify $\bigoplus_{i=1}^d R_i$ with its image under $\loc_p$ as a submodule of $\bigoplus_{i=1}^d H^1_\iw(K_{\infty,\frap_i}, T_v)$, where $R_i$ is regarded as a $\Lambda_v$-submodule of $H^1_\iw(K_{\infty,\frap_i}, T_v)$ of rank at most $2$.

\vspace{1.3mm}

In the proof of Lemma \ref{lem:localizationisinjective} we see that, for each $i\in\{1,\ldots,d\}$, the cokernel of the composition
     \begin{equation*}
           H^1_{\iw}(K_\infty,T_v) \xrightarrow{\;\;\loc_{\frap_i}\;\;}  H^1_\iw(K_{\infty,\frap_i}, T_v) \xrightarrow{\;\;\col_{v,i}^{s_i}\;\;}\Lambda_v
     \end{equation*}
 is $\Lambda_v$-torsion. It then follows that the $\Lambda_v$-rank of $R_i$ cannot be zero. Therefore, the $\Lambda_v$-module  $\bigoplus_{i=1}^d R_i$ has rank at least $d$. Since  $H^1_{\iw}(K_\infty,T_v)$ is of $\Lambda_v$-rank $d$, we deduce that the $\Lambda_v$-rank of $\bigoplus_{i=1}^d R_i$ is exactly $d$. Thus, the $\Lambda_v$-rank of each $R_i$ is one, and we may pick $c_i\in R_i$ so that $  \displaystyle\frac{H^1_{\iw}(K_\infty,T_v)}{\langle c_1,\ldots,c_d \rangle}$ is $\Lambda_v$-torsion. Further, since $R_i\cap R_j=0$ whenever $i\neq j$, we see that $\loc_{\frap_j}(c_i)=0$ whenever $i\neq j$.
\end{proof}

  Let $H_n^\sharp$ and $H_n^\flat$ denote the entries of the first row of the matrix $\ccH_n$. We then see that $\ccH_n$ is of the form
\begin{equation*}
    \ccH_n=\begin{bmatrix}
        H_n^\sharp& H_n^\flat\\ -\Phi_n H_{n-1}^\sharp & -\Phi_n H_{n-1}^\flat
    \end{bmatrix}.
\end{equation*}
Since $\det\ccH_n=\omega_n/X$, we have
\begin{equation}\label{eq:det}
    -H_n^\sharp H_{n-1}^\flat+H_n^\flat H_{n-1}^\sharp=\omega_{n-1}/X.
\end{equation}

Recall that $\zeta_{p^n}$ denotes a primitive $p^n$-th root of unity and  $\epsilon_n=\zeta_{p^n}-1$.
By  \cite[Proposition 4.6]{LLZ17}, we have for $n\ge1$

  \[\ord_{\epsilon_n}\left( H_n^\sharp(\epsilon_n)\right)=\begin{cases}
     \phi(p^n)\left( r_p + \displaystyle{\sum_{k=1}^{\tfrac{n-1}{2}}}\frac{1}{p^{2k}} \right)  & \text{ if } n \text{ is odd,}\\
     \phi(p^n)\left( \displaystyle{\sum_{k=1}^{\tfrac{n}{2}}}\frac{1}{p^{2k-1}}\right)   & \text{ if } n \text{ is even,}
    \end{cases}\]
    and 
      \[\ord_{\epsilon_n}\left( H_n^\flat(\epsilon_n)\right)=\begin{cases}
     \phi(p^n)\left(  \displaystyle{\sum_{k=1}^{\tfrac{n-1}{2}}}\frac{1}{p^{2k-1}} \right)  & \text{ if } n \text{ is odd,}\\
     \phi(p^n)\left( r_p + \displaystyle{\sum_{k=1}^{\tfrac{n}{2}-1}}\frac{1}{p^{2k}} \right)   & \text{ if } n \text{ is even,}
    \end{cases}\]
    where $r_p=\ord_p(a_p)$, $\phi(p^n)=p^n-p^{n-1}$.

\begin{definition}
     For $n\geq 1$, $i\in\{1,\dots, d\}$, and $u\in\ccO_v^\times$, we define
      \begin{equation*}
         \col_{v,n,i}^{u}:\HIw(K_{\infty,\frap_i}, T_v) \longrightarrow \Lambda_{v,n}
    \end{equation*}
    to be 
    \[
    \begin{bmatrix}
        1&u
    \end{bmatrix}\ccH_n\begin{bmatrix}
        \col^\sharp_{v,i}\\ \col^\flat_{v,i}
    \end{bmatrix}\mod\omega_n.
    \]
\end{definition}

In what follows, we write $\col_{v,n,i}$ for the map given by \eqref{eq:colvn}, after taking $k_\infty$ to be $K_{\infty,\frap_i}$.
\begin{lemma}\label{lem:injects}
   For each $n\ge1$ and $i\in\{1,\dots,d\}$, the map  
   \begin{align*}
\underline{\col}_{v,n,i}:   \HIw(K_{\infty,\frap_i},T_v)&\to \prod_{m=0}^n\cO_v[X]/(\Phi_m)\\
   z&\mapsto\left(\col_{v,n,i}(z)\right)_{0\le m\le n}
   \end{align*}
   induces an injection on $H^1_{/\bff}(K_{n,\frap_i},T_v):=\dfrac{H^1(K_{n,\frap_i}, T_v)}{A(K_{n,\frap_i})\otimes_{\ccO_F}\ccO_v}$.
\end{lemma}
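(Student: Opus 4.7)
The plan is to identify the kernel of $\underline{\col}_{v,n,i}$ inside $H^1(K_{n,\frap_i},T_v)$ with the Kummer image $A(K_{n,\frap_i})\otimes_{\ccO_F}\ccO_v$, using the generating property of Proposition~\ref{pointsgeneratetheformalgroup} together with local Tate duality. As a first step, one checks that for each $m\le n$ the quantity $\col_{v,m,i}(z)$ depends on $z\in\HIw(K_{\infty,\frap_i},T_v)$ only through its corestriction to level $m$, hence only through the image of $z$ in $H^1(K_{n,\frap_i},T_v)$. Consequently $\underline{\col}_{v,n,i}$ descends to a well-defined map on $H^1(K_{n,\frap_i},T_v)$, and showing the induced map on the quotient $H^1_{/\bff}(K_{n,\frap_i},T_v)$ is injective amounts to showing $\ker\underline{\col}_{v,n,i}\subseteq A(K_{n,\frap_i})\otimes_{\ccO_F}\ccO_v$ inside $H^1(K_{n,\frap_i},T_v)$.

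Next, via the Chinese remainder decomposition $\Lambda_{v,m}=\ccO_v[X]/\omega_m\hookrightarrow\prod_{0\le k\le m}\ccO_v[X]/(\Phi_k)$, reducing $\col_{v,m,i}(z)$ modulo $\Phi_m$ picks out the characters of $G_m$ of exact conductor $p^m$, and a standard character-sum identity reinterprets $\col_{v,m,i}(z)\equiv 0\pmod{\Phi_m}$ as the vanishing of the twisted pairings $\sum_{\sigma\in G_m}\chi(\sigma)\langle z^{\sigma^{-1}},d_m\rangle_n$ for all such $\chi$. After invoking Frobenius reciprocity to rewrite $\langle\cor_{n/m}(z),d_m\rangle_m$ as $\langle z,d_m\rangle_n$ (with $d_m$ viewed in $\widehat A(k_n)$ via the natural inclusion $\widehat A(k_m)\hookrightarrow\widehat A(k_n)$), and letting $m$ range over $\{0,1,\dots,n\}$, I would conclude that $\underline{\col}_{v,n,i}(z)=0$ precisely when $z$ is orthogonal to the $\ccO_v[G_n]$-submodule of $\widehat A(k_n)$ generated by $d_0,d_1,\dots,d_n$. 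By Proposition~\ref{pointsgeneratetheformalgroup}, already $d_n$ and $d_{n-1}$ generate $\widehat A(k_n)$ as an $\ccO_v[G_n]$-module, so $\ker\underline{\col}_{v,n,i}$ coincides with $\widehat A(k_n)^{\perp}$ under $\langle-,-\rangle_n$.

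The final step uses local Tate duality. The pairing $\langle-,-\rangle_n$ is the integral refinement of the Tate pairing $H^1(K_{n,\frap_i},T_v)\times H^1(K_{n,\frap_i},A[v^\infty])\to F_v/\ccO_v$ obtained by composing with the Kummer embedding of $\widehat A(k_n)\otimes_{\ccO_F} F_v/\ccO_v$ into the second factor, and self-duality of $A$ makes the Kummer image $A(K_{n,\frap_i})\otimes_{\ccO_F}\ccO_v\hookrightarrow H^1(K_{n,\frap_i},T_v)$ a maximal isotropic subspace. From this, one obtains both $A(K_{n,\frap_i})\otimes_{\ccO_F}\ccO_v\subseteq\ker\underline{\col}_{v,n,i}$ (ensuring the map descends) and $\ker\underline{\col}_{v,n,i}=\widehat A(k_n)^{\perp}=A(K_{n,\frap_i})\otimes_{\ccO_F}\ccO_v$, yielding the desired injection on $H^1_{/\bff}(K_{n,\frap_i},T_v)$.

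The main obstacle I anticipate is the careful verification of the identification $\widehat A(k_n)^{\perp}=A(K_{n,\frap_i})\otimes_{\ccO_F}\ccO_v$, i.e., the integral self-orthogonality of the Kummer image under $\langle-,-\rangle_n$ in the supersingular $\gl_2$-type setting. This is the analog of the classical isotropy statement exploited by Kobayashi \cite[\S8]{kob03} for elliptic curves, and one must check that the real-multiplication action of $\ccO_F$ does not introduce spurious contributions; the Frobenius-reciprocity bookkeeping in the second step (which allows one to reduce the kernel computation to pairing against a single set of generators of $\widehat A(k_n)$) is the technical heart of the argument.
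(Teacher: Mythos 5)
Your proposal is correct and essentially follows the same strategy as the paper's proof, reducing the identification of the kernel of $\underline{\col}_{v,n,i}$ to the generating property of Proposition~\ref{pointsgeneratetheformalgroup} combined with the self-orthogonality of the Kummer image under local Tate duality. The only presentational difference worth noting: the paper's proof argues character by character, letting $\theta$ range over characters of $G_n$ with exact conductor $p^m$ and working directly at level $m$ against $d_m$, using that $d_m$ generates $\widehat{A}(K_{m,\frap_i})/\widehat{A}(K_{m-1,\frap_i})$ over $\ccO_v[G_m]$; you instead pull all pairings up to level $n$ via the corestriction--restriction adjunction and identify $\ker\underline{\col}_{v,n,i}$ with $\widehat{A}(k_n)^{\perp}$ inside $H^1(K_{n,\frap_i},T_v)$ before invoking duality once at the top. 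Both routes lean on the same two inputs; the paper's character-by-character phrasing avoids spelling out the Frobenius reciprocity step, while yours makes the dependence on the integral isotropy of the Kummer image a bit more visible, which is indeed the point that warrants care (and is standard for supersingular good reduction, as in Kobayashi's original treatment).
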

\begin{proof}
As $A(k_\infty)[p^\infty]=0$ (see proof of Lemma~\ref{injectivityoflogarithm}), the projection $\HIw(K_{\infty,\frap_i}, T_v)\to H^1(K_{n,\frap_i},T_v)$ is surjective, resulting in the isomorphism
\[
\HIw(K_{\infty,\frap_i},T_v)/{(\omega_n)}\cong H^1(K_{n,\frap_i},T_v).
\]
In particular, $\underline{\col}_{v,n,i}$ induces a map on $H^1(K_{n,\frap_i},T_v)$.

Let $\theta$ be a character of $G_n=\gal(K_{n,\frap_i}/K_{\frap_i})$. We write $e_\theta\in F_v[G_n]$ for the idempotent element of $\theta$. Let $0\le m\le n$ be the integer such that $\theta$ factors through $G_m$ but not $G_{m-1}$. Then, $e_\theta\cdot z $ belongs to the image of $e_\theta\cdot A(K_{n,\frap_i})\otimes_{\ccO_F}\ccO_v$ if and only if $\col_{v,m,i}(z)\equiv 0\mod \Phi_m$ since $\col_{v,m,i}$ is defined by pairings with $d_n$, and $d_n$ generates $\widehat{A}(K_{m,\frap_i})/\widehat{A}(K_{m-1,\frap_i})$ as an $\cO_v[G_m]$-module (see the proof of Proposition~\ref{pointsgeneratetheformalgroup}). Therefore, $z\in\ker\underline{\col}_{v,n,i}$ if and only if $e_\theta\cdot z\in e_\theta\cdot A(K_{n,\frap_i})\otimes_{\ccO_F}\ccO_v$ for all $\theta$. This tells us that the kernel of $\underline{\col}_{v,n,i}$ is the image of $ A(K_{n,\frap_i})\otimes_{\ccO_F}\ccO_v$ in $H^1(K_{n,\frap_i},T_v)$, as desired.
\end{proof}

\begin{lemma}\label{lem:choosingaunit}
    For $n\geq 1$ and $i\in\{1,\dots,d\}$, we have
    \begin{enumerate}[label=(\roman*)]
        \item $\omega_{n-1}\Lambda_v\subset  \image(\col_{v,n,i}^{u})$ for any choice of $u$;
        \item There exist infinitely many $u \in \ccO_v^\times$ such that $ \col_{v,n,i}^{u}$ induces an injection
        \begin{equation*}
              H^1_{/\bff}(K_{n,\frap_i}, T_v) \xhookrightarrow{\;\;\;\;\;\;\;\;} \Lambda_{v,n}
        \end{equation*}
        with finite cokernel.
    \end{enumerate}
\end{lemma}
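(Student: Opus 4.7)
The plan for part~(i) is to produce explicit preimages using Theorem~\ref{thm:imageofsumofcolemanmaps} together with the determinant identity~\eqref{eq:det}. Given any $h \in \Lambda_v$, the pair $(-XH_{n-1}^\flat h,\, XH_{n-1}^\sharp h)$ lies in $J_v$ since both components vanish at $X = 0$, so by Theorem~\ref{thm:imageofsumofcolemanmaps} there exists $z \in \HIw(K_{\infty,\frap_i}, T_v)$ realising these values under $\col^{\sharp}_{v,i} \oplus \col^{\flat}_{v,i}$. Expanding
\[
\col_{v,n,i}^u(z) = -XH_{n-1}^\flat h\cdot(H_n^\sharp - u\Phi_n H_{n-1}^\sharp) + XH_{n-1}^\sharp h\cdot(H_n^\flat - u\Phi_n H_{n-1}^\flat) \mod \omega_n,
\]
the $u$-dependent contributions cancel, and \eqref{eq:det} reduces the remainder to $X\cdot(\omega_{n-1}/X)\cdot h = \omega_{n-1}h$, giving $\omega_{n-1}\Lambda_v \subset \image(\col_{v,n,i}^u)$ for every $u$.

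For part~(ii), I would first check that $\col_{v,n,i}^u$ descends to $H^1_{/\bff}(K_{n,\frap_i}, T_v)$: since $\col_{v,n,i}$ and $\col_{v,n-1,i}$ are defined via local Tate pairings with the Kummer classes of $d_n$ and $d_{n-1}$, and the Kummer image is self-orthogonal under the local Tate pairing (using self-duality of $A$), both vanish on $A(K_{n,\frap_i})\otimes_{\ccO_F}\ccO_v$. By Lemma~\ref{lem:injects}, $H^1_{/\bff}(K_{n,\frap_i}, T_v)$ embeds into the torsion-free module $\prod_{m=0}^n \ccO_v[X]/(\Phi_m)$ and hence is itself $\ccO_v$-torsion-free; a local Euler characteristic count gives it the same $\ccO_v$-rank $p^n$ as $\Lambda_{v,n}$. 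Since both sides are $\ccO_v$-free of the same rank, injectivity of $\col_{v,n,i}^u$ and finiteness of its cokernel become equivalent, so it suffices to establish injectivity for infinitely many $u \in \ccO_v^\times$.

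To prove the generic injectivity, I would analyse $\col_{v,n,i}^u \mod \Phi_m$ for each $0 \le m \le n$. Writing $\ccH_n(\epsilon_m) = (\ccC_n\cdots\ccC_{m+1})(\epsilon_m)\cdot\ccH_m(\epsilon_m)$, the first factor is invertible because $\det\ccC_j(\epsilon_m) = \Phi_j(\epsilon_m) \ne 0$ for $j > m$. For $1 \le m \le n$, the second row of $\ccH_m(\epsilon_m)$ vanishes (its entries carry a factor of $\Phi_m$), so $\ccH_n(\epsilon_m)$ has rank one with both rows proportional to $(H_m^\sharp(\epsilon_m), H_m^\flat(\epsilon_m))$, yielding
\[
\col_{v,n,i}^u(z) \equiv \mu_m(u)\cdot\col_{v,m,i}(z) \mod \Phi_m
\]
for a degree-one polynomial $\mu_m(u)$ over $F_v(\zeta_{p^m})$ whose leading coefficient is nonzero by invertibility of the connecting matrix. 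For $m = 0$, $\ccH_n(0)$ is already invertible (as $\omega_n/X$ has value $p^n$ at $0$), so the rank-reduction must instead come from the constraint $(p-1)g_1(0) = (2-a_p)g_2(0)$ in Theorem~\ref{thm:imageofsumofcolemanmaps}, which pins $(\col_v^\sharp(z)(0), \col_v^\flat(z)(0))$ to a one-dimensional subspace and reduces $\col_{v,n,i}^u(z) \mod X$ to a linear-in-$u$ scalar $\mu_0(u)$ times $\col_{v,0,i}(z)$. Excluding at most one $u \in \ccO_v$ per level $m$ (finitely many in total), every $\mu_m(u)$ is nonzero, and $\col_{v,n,i}^u(z) = 0$ forces $\col_{v,m,i}(z) \equiv 0 \mod \Phi_m$ for all $m$; Lemma~\ref{lem:injects} then yields $z = 0$ in $H^1_{/\bff}(K_{n,\frap_i}, T_v)$.

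The hardest step I foresee is the $m = 0$ analysis, where the rank-one dichotomy used for $1 \le m \le n$ breaks down and must be replaced by an appeal to the $J_v$-constraint from Theorem~\ref{thm:imageofsumofcolemanmaps}. A subsidiary technical point is that $\mu_m(u) \cdot \col_{v,m,i}(z) \equiv 0 \mod \Phi_m$ with $\mu_m(u) \ne 0$ really implies $\col_{v,m,i}(z) \equiv 0 \mod \Phi_m$; this uses that $\ccO_v[X]/(\Phi_m)$ is $\ccO_v$-torsion-free together with the torsion-freeness of $H^1_{/\bff}(K_{n,\frap_i}, T_v)$ noted above.
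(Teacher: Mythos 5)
Your proof takes essentially the same approach as the paper: for (i), an explicit preimage computed from Theorem~\ref{thm:imageofsumofcolemanmaps} and the determinant identity~\eqref{eq:det}; for (ii), a mod-$\Phi_m$ analysis via the CRT embedding $\Lambda_{v,n}\hookrightarrow\prod_m\ccO_v[X]/(\Phi_m)$ showing $\col_{v,n,i}^u\equiv\mu_m(u)\col_{v,m,i}\bmod\Phi_m$, followed by an appeal to Lemma~\ref{lem:injects}; your rank-one description of $\ccH_n(\epsilon_m)$ is the same fact as the paper's factorization $\ccH_n\equiv\begin{bmatrix}a_p&1\\-p&0\end{bmatrix}^{n-m}\ccH_m\bmod\omega_m$. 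One small slip: invertibility of the connecting matrix does not force the leading coefficient of $\mu_m(u)=b_{11}+ub_{21}$ to be nonzero ($b_{21}$ may vanish); what it does give is that $\mu_m$ is not identically zero, and that already suffices to exclude at most one $u$ per level, so the conclusion stands.
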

\begin{proof}
     By Theorem~\ref{thm:imageofsumofcolemanmaps}, there exists $z\in\HIw(K_{\infty,\frap_i},T_v)$ such that   $\col_{v,i}^\sharp(z)=-XH_{n-1}^\flat$ and  $\col_{v,i}^\flat(z)=XH_{n-1}^\sharp$. For such $z$, we have
    \begin{align*}
    \col_{v,n,i}^u(z)&=\begin{bmatrix}
        1& u
    \end{bmatrix}\begin{bmatrix}
        H_{n}^\sharp & H_{n}^\flat\\-\Phi_nH_{n-1}^\sharp & -\Phi_nH_{n-1}^\flat
    \end{bmatrix}
    \begin{bmatrix}
        -XH_{n-1}^\flat\\XH_{n-1}^\sharp
    \end{bmatrix}\\
        &=-X(H_n^\sharp H_{n-1}^\flat - H_{n}^\flat H_{n-1}^\sharp)\\
        &=\omega_{n-1}
    \end{align*}
    by \eqref{eq:det}. Therefore, assertion (i) holds.

We now prove assertion (ii).    Recall from Theorem~\ref{thm:signedcolemanmaps} that
     \[ \ccH_n \begin{bmatrix}
        \col^{\sharp}_{v,i}\\ \col^{\flat}_{v,i}
    \end{bmatrix} \equiv \begin{bmatrix}
        \col_{v,n,i}\\ -\xi_{n-1}\col_{v,n-1,i}
    \end{bmatrix} \mod \omega_n,\]
    which implies 
    \[
    \col_{v,n,i}^u=\col_{v,n,i}-u\xi_{n-1}\col_{v,n-1,i}.
    \]

By the Chinese remainder theorem, there is an injection $\displaystyle\Lambda_{v,n}\hookrightarrow \prod_{m=0}^n\cO[X]/(\Phi_m)$. For all $1\le m\le n$, we have
\[
\ccC_r(X)\equiv\begin{bmatrix}
    a_p&1\\-p&0
\end{bmatrix}\mod\omega_m,\quad m<r\le n.
\]
Thus,
\[
\col_{v,n,i}^u\equiv\begin{bmatrix}
    1&u
\end{bmatrix} \begin{bmatrix}
    a_p&1\\-p&0
\end{bmatrix}^{n-m}\ccH_m  \begin{bmatrix}
        \col^{\sharp}_{v,i}\\ \col^{\flat}_{v,i}
    \end{bmatrix}   \mod \omega_m,
\]
which in turn implies that
\[
\col_{v,n,i}^u\equiv\begin{bmatrix}
    1&u
\end{bmatrix} \begin{bmatrix}
    a_p&1\\-p&0
\end{bmatrix}^{n-m} \begin{bmatrix}
        \col_{v,m,i}\\ 0
    \end{bmatrix}   \mod \Phi_m.
\]
Therefore, there are infinitely many choices of $u$ such that $\col_{v,n,i}^u$ is congruent to a non-zero multiple of $\col_{v,m,i}$ modulo $\Phi_m$. A similar argument shows that the same is true for $m=0$.
In particular, there exist infinitely many $u$ such that this holds for all $0\le m\le n$. Once such $u$ is picked, Lemma~\ref{lem:injects} tells us that $\col_{v,n,i}^u$ induces an injection on $H^1_{/\bff}(K_{n,\frap_i},T_v)$.

Finally, as  $ H^1_{/\bff}(K_{n,\frap_i}, T_v)$ and $\Lambda_{v,n}$ are both of rank $p^n$ over $\cO_v$, the cokernel is finite.
\end{proof}

\begin{corollary}\label{cor:kobayshirankofcolemanwithunit}
   For $i\in\{1,\dots,d\}$,  let $z_i\in \HIw(K_{\infty,\frap_i},T_v)$ and $u\in \ccO_v^\times$ satisfying Lemma \ref{lem:choosingaunit} (ii). For $n\geq 1$, write $M_{n,i}$ for the $\Lambda_{v,n}$-module generated by the natural image of $z$ in $H^1_{/\bff}(K_{n,\frap_i}, T_v)$. Suppose that the natural projection
   \begin{equation*}
        \pi_i: \frac{H^1_{/\bff}(K_{n,\frap_i}, T_v)}{M_{i,n}} \longrightarrow \frac{H^1_{/\bff}(K_{n-1,\frap_i}, T_v)}{M_{i,n-1}}
   \end{equation*}
   has finite kernel for some $n$. Then we have
   \begin{equation*}
       \nabla\left(\frac{H^1_{/\bff}(K_{n,\frap_i}, T_v)}{M_{i,n}} \right)=\ord_{\epsilon_n}\left(\col_{v,n,i}^{u}(z)(\epsilon_n)\right).
   \end{equation*}
\end{corollary}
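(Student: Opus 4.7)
The plan is to transfer the computation of $\nabla A_n$, where $A_n := H^1_{/\bff}(K_{n,\frap_i}, T_v)/M_{n,i}$, to the Iwasawa-algebraic quotient $B_n := \Lambda_{v,n}/(f)$, where $f := \col_{v,n,i}^u(z) \in \Lambda_{v,n}$. By Lemma~\ref{lem:choosingaunit}(ii), $\col_{v,n,i}^u$ induces an injection $\iota_n : A_n \hookrightarrow B_n$ with finite cokernel. The transition map $\pi_A : A_n \to A_{n-1}$ is surjective (the corestriction $H^1(K_{n,\frap_i}, T_v) \to H^1(K_{n-1,\frap_i}, T_v)$ is surjective because $A(k_\infty)[p^\infty] = 0$, as in the proof of Lemma~\ref{lem:injects}, and $M_{n,i} \twoheadrightarrow M_{n-1,i}$), so by the definition of $\nabla$, $\nabla A_n = \length_{\ccO_v}(\ker \pi_A) + \rank_{\ccO_v} A_{n-1}$.

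For the rank, the finiteness of $\ker \pi_A$ gives $\rank_{\ccO_v} A_{n-1} = \rank_{\ccO_v} A_n = \rank_{\ccO_v} B_n$, which the Chinese Remainder decomposition $\Lambda_{v,n} \otimes F_v \cong \prod_{k=0}^{n} F_v(\zeta_{p^k})$ allows one to compute as $\sum_{k \le n,\, f(\epsilon_k)=0} \phi(p^k)$. Writing $f = gh$ (for any lift to $\Lambda_v$) with $h := \gcd(\omega_{n-1}, f)$ a product of cyclotomic factors $\Phi_k$ for $k \le n-1$, and observing that $f(\epsilon_n) \neq 0$ (else $\ker \pi_A$ would be infinite), this rank equals $\deg_X h = \ord_{\epsilon_n}(h(\epsilon_n))$. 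For the kernel, the plan is to compare $\pi_A$ with the natural projection $\pi_B : B_n \to B_n' := \Lambda_{v,n-1}/(\bar f)$, where $\bar f := f \bmod \omega_{n-1}$. Lemma~\ref{lem:choosingaunit}(i) gives $\omega_{n-1}\Lambda_{v,n} \subseteq \image(\iota_n)$, which allows one to descend $\iota_n$ to an injection $\bar\iota_n : H^1_{/\bff}(K_{n-1,\frap_i}, T_v) \hookrightarrow \Lambda_{v,n-1}$ fitting into a commutative square with $\pi_A$ and $\pi_B$; moreover, the induced map on the finite cokernels is the identity, so the snake lemma forces $\ker \pi_A \cong \ker \pi_B$. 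An elementary ideal-theoretic computation in $\Lambda_v$ yields $\omega_{n-1}\Lambda_{v,n} \cap (f) = \omega_{n-1}g\Lambda_{v,n}$, so $\ker \pi_B \cong \Lambda_v/(\Phi_n, g) \cong \ccO_v[\zeta_{p^n}]/(g(\epsilon_n))$, which has $\ccO_v$-length $\ord_{\epsilon_n}(g(\epsilon_n))$.

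Adding the two contributions gives $\nabla A_n = \ord_{\epsilon_n}(g(\epsilon_n)) + \ord_{\epsilon_n}(h(\epsilon_n)) = \ord_{\epsilon_n}(f(\epsilon_n))$, as required. The main obstacle will be the descent step: both the well-definedness and the injectivity of $\bar\iota_n$ reduce to the equality $\omega_{n-1}\Lambda_{v,n} = \omega_{n-1}\image(\iota_n)$, which is in turn equivalent to the statement that $\Phi_n$ acts surjectively on the finite cokernel of $\iota_n$. I would handle this by exploiting Lemma~\ref{lem:choosingaunit}(i) in combination with the explicit structure of the cokernel provided by the local points $\{d_n\}$ from Section~\ref{constructionoflocalpoins}, in particular the trace relation $\tr_{k_{n+1}/k_n}(d_{n+1}) = C_p d_n - d_{n-1}$ from Theorem~\ref{thm:QsystemforT}.
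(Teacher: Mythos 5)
Your plan closely mirrors the paper's proof, which simply refers to \cite[Corollary 4.5]{LL22}: transfer the computation to $\Lambda_{v,n}$-modules through the injection of Lemma~\ref{lem:choosingaunit}(ii), split $\nabla A_n = \length_{\ccO_v}(\ker\pi_A) + \rank_{\ccO_v}A_{n-1}$, and evaluate both pieces via the factorization $f = gh$ with $h=\gcd(\omega_{n-1},f)$. Your explicit computations in the $\Lambda$-model are correct: $\rank_{\ccO_v}B_n = \deg h = \ord_{\epsilon_n}(h(\epsilon_n))$ and, via $\omega_{n-1}\Lambda_{v,n}\cap(f) = \omega_{n-1}g\Lambda_{v,n}$, $\ker\pi_B \cong \Lambda_v/(\Phi_n,g) \cong \ccO_v[\zeta_{p^n}]/(g(\epsilon_n))$.

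Where your write-up drifts is in the treatment of the descent step, which you rightly flag as the crux. Two corrections. First, the \emph{well-definedness} of $\bar\iota_n$ is automatic from $\Lambda_{v,n}$-linearity of $\col_{v,n,i}^u$ — nothing beyond that is needed; it is only the \emph{injectivity} that is delicate. Second, Lemma~\ref{lem:choosingaunit}(i) gives the inclusion $\omega_{n-1}\Lambda_{v,n}\subseteq\image(\col_{v,n,i}^u)$, i.e.\ that the cokernel is killed by $\omega_{n-1}$; this is genuinely weaker than the equality $\omega_{n-1}\Lambda_{v,n} = \omega_{n-1}\image(\iota_n)$ you identify as the obstacle. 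Indeed, since $\Phi_n \equiv p \pmod{\omega_{n-1}}$, on a module killed by $\omega_{n-1}$ the condition ``$\Phi_n$ acts surjectively'' is the condition ``$p$ acts surjectively,'' which for a \emph{finite} $\ccO_v$-module forces the module to vanish — that is not a consequence of Lemma~\ref{lem:choosingaunit}(i), and it is not the right target. What the argument actually needs, and what the paper (following \cite[Corollary 4.4]{LL22}) supplies, is that the natural map between the cokernels of $\col_{v,n,i}^u$ at level $n$ and of its reduction at level $n-1$ is injective (an immediate consequence of Lemma~\ref{lem:choosingaunit}(i)), combined with the surjectivity of $\pi_i$; the exactness of the bottom row of the relevant diagram, i.e.\ the injectivity of the reduced Coleman map on $H^1_{/\bff}(K_{n-1,\frap_i},T_v)$, comes not from the statement of Lemma~\ref{lem:choosingaunit}(ii) but from its \emph{proof}: $u$ is chosen so that $\col_{v,n,i}^u \bmod\Phi_m$ is a nonzero multiple of $\col_{v,m,i}$ for \emph{every} $0\le m\le n$, and Lemma~\ref{lem:injects} then gives injectivity at each intermediate level, including $n-1$. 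Your proposed route — invoking the local points $\{d_n\}$ and the trace relation $\tr(d_{n+1})=C_pd_n-d_{n-1}$ — is not on point here: that relation is already encoded in the matrices $\ccH_n$ and does not by itself control the cokernel of $\col_{v,n,i}^u$ under reduction mod $\omega_{n-1}$.
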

\begin{proof}
The proof is identical to the proof of \cite[Corollary 4.5]{LL22}.
 \end{proof}

\begin{prop}\label{prop:orderofunitcolemanmap}
  For $n\geq 1$, let $z\in \HIw(K_{\infty,\frap_i},T_v)$ such that $\col_{v,n,i}^{u}(z)(\epsilon_n)\neq 0$ and $u\in \ccO_v^\times$ satisfying Lemma \ref{lem:choosingaunit} (ii).  Then we have
    \begin{equation*}
     \ord_{\epsilon_n}\left(\col_{v,n,i}^{u}(z)(\epsilon_n)\right) = \ord_{\epsilon_n}\left( H_n^{s_i}(\epsilon_n)\right)+ \ord_{\epsilon_n}\left( \col_{v,i}^{s_i}(z)(\epsilon_n)\right),
    \end{equation*}
    where $s_i\in\{\sharp,\flat\}$ depends on the parity of $n$.    
\end{prop}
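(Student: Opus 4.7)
The plan is to reduce $\col_{v,n,i}^u(z)(\epsilon_n)$ to a two-term sum using the defining congruence for the signed Coleman maps, then to identify the dominant term from the explicit $\epsilon_n$-valuations of $H_n^\sharp$ and $H_n^\flat$, and conclude by the ultrametric inequality.

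First, I unfold the definition of $\col_{v,n,i}^u$, namely $[1,u]\,\ccH_n\,(\col^\sharp_{v,i},\col^\flat_{v,i})^t \bmod\omega_n$, using the block form of $\ccH_n$. This gives
\[
\col_{v,n,i}^u(z) \equiv H_n^\sharp\,\col^\sharp_{v,i}(z) + H_n^\flat\,\col^\flat_{v,i}(z) - u\,\Phi_n\bigl(H_{n-1}^\sharp\,\col^\sharp_{v,i}(z) + H_{n-1}^\flat\,\col^\flat_{v,i}(z)\bigr)\pmod{\omega_n}.
\]
Since $\Phi_n(\epsilon_n)=0$, evaluation at $\epsilon_n$ kills the $u$-correction (this is also the content of the identification of $\xi_{n-1}$ with multiplication by $\Phi_n$), leaving
\[
\col_{v,n,i}^u(z)(\epsilon_n) = H_n^\sharp(\epsilon_n)\,\col^\sharp_{v,i}(z)(\epsilon_n) + H_n^\flat(\epsilon_n)\,\col^\flat_{v,i}(z)(\epsilon_n).
\]

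Second, the supersingular hypothesis forces $r_p=\ord_p(a_p)\ge 1$. Substituting $r_p\ge 1$ into the displayed formulas for $\ord_{\epsilon_n}(H_n^\sharp(\epsilon_n))$ and $\ord_{\epsilon_n}(H_n^\flat(\epsilon_n))$ given just before the proposition, an elementary geometric-series estimate shows that these two valuations are strictly unequal for every $n\ge 1$, and that which of the two is smaller flips as the parity of $n$ changes (concretely, $H_n^\flat(\epsilon_n)$ is smaller when $n$ is odd and $H_n^\sharp(\epsilon_n)$ is smaller when $n$ is even). I define $s_i\in\{\sharp,\flat\}$ to be the sign for which $H_n^{s_i}(\epsilon_n)$ has the smaller $\epsilon_n$-valuation; this choice depends only on the parity of $n$, as asserted.

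Finally, I aim to show the strict inequality
\[
\ord_{\epsilon_n}\!\bigl(H_n^{s_i}(\epsilon_n)\,\col^{s_i}_{v,i}(z)(\epsilon_n)\bigr) < \ord_{\epsilon_n}\!\bigl(H_n^{s_i'}(\epsilon_n)\,\col^{s_i'}_{v,i}(z)(\epsilon_n)\bigr),
\]
where $s_i'$ denotes the opposite sign; once this is in hand, the ultrametric inequality gives the proposition directly. The main obstacle is ruling out the degenerate case where the two products have equal $\epsilon_n$-valuations with exact cancellation. To do this I would combine the image description in Theorem~\ref{thm:imageofsumofcolemanmaps}, which bounds the relative discrepancy $\ord_{\epsilon_n}(\col^{s_i'}_{v,i}(z)(\epsilon_n))-\ord_{\epsilon_n}(\col^{s_i}_{v,i}(z)(\epsilon_n))$, with the choice of $u$ from Lemma~\ref{lem:choosingaunit}(ii) and the standing hypothesis $\col_{v,n,i}^u(z)(\epsilon_n)\neq 0$, and check that these controls cannot close the substantial strict gap $\ord_{\epsilon_n}(H_n^{s_i'}(\epsilon_n))-\ord_{\epsilon_n}(H_n^{s_i}(\epsilon_n))$ computed in the previous step.
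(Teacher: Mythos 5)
The paper's own proof of this proposition is a citation: it appeals to \cite[Proposition 4.6]{LL22}, which in turn is a special case of \cite[Proposition 4.6 and Corollary 4.8]{LLZ17}. Your first two steps reproduce the right preliminary reductions. Unfolding $[1,u]\,\ccH_n$ using the block form of $\ccH_n$ and observing that $\Phi_n(\epsilon_n)=0$ indeed kills the $u$-term, leaving $\col_{v,n,i}^u(z)(\epsilon_n)=H_n^\sharp(\epsilon_n)\col^\sharp_{v,i}(z)(\epsilon_n)+H_n^\flat(\epsilon_n)\col^\flat_{v,i}(z)(\epsilon_n)$; and your identification of which $H_n^\bullet(\epsilon_n)$ has the smaller $\epsilon_n$-valuation ($\flat$ for $n$ odd, $\sharp$ for $n$ even), using $r_p\ge1$ and the geometric-sum bound $(p-1)\sum_{k\ge1}p^{-2k}=1/(p+1)<1$, is correct.

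Step 3, however, is where the argument is genuinely incomplete, and the tools you propose to close the gap do not do the job. First, you cite Lemma~\ref{lem:choosingaunit}(ii) as a control, but by your own step 1 the $u$-dependence has already vanished at $\epsilon_n$, so the choice of $u$ cannot contribute anything to the valuation computation; this is an internal inconsistency in the plan. Second, Theorem~\ref{thm:imageofsumofcolemanmaps} only pins down the image $J_v$ via a linear relation at $X=0$ (namely $(p-1)g_1(0)=(2-a_p)g_2(0)$), so it constrains $\col^\sharp_{v,i}(z)$ and $\col^\flat_{v,i}(z)$ at $\epsilon_0$ but says nothing about their values at $\epsilon_n$ for $n\ge1$; it therefore cannot bound the discrepancy $\ord_{\epsilon_n}(\col^{s_i'}_{v,i}(z)(\epsilon_n))-\ord_{\epsilon_n}(\col^{s_i}_{v,i}(z)(\epsilon_n))$. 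Third, the standing hypothesis $\col_{v,n,i}^u(z)(\epsilon_n)\ne0$ only rules out exact cancellation; it does not prevent the case where the two product terms have equal $\epsilon_n$-valuation with only partial cancellation, in which case the valuation of the sum would exceed that of either term and the claimed identity would fail. What is actually needed is the quantitative comparison from \cite{LLZ17}: the ratios $\ord_{\epsilon_n}(H_n^\bullet(\epsilon_n))/\phi(p^n)$ are governed by partial geometric sums that strictly increase (to a limit $<r_p+1/(p^2-1)$) with $n$ of fixed parity, while the contribution of the $\mu$- and $\lambda$-invariants of $\col^\bullet_{v,i}(z)$ to $\ord_{\epsilon_n}(\col^\bullet_{v,i}(z)(\epsilon_n))/\phi(p^n)$ is eventually constant; this is the mechanism that breaks any potential tie and forces the $s_i$-term to dominate for $n\gg0$. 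Your sketch does not contain this analysis, and without it the proposition is not proved.
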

\begin{proof}
      See \cite[Proposition 4.6]{LL22}, which is a special case of \cite[Proposition 4.6 and Corollary 4.8]{LLZ17}.
\end{proof}

 From now on, we fix $\bfc=(c_1,\ldots ,c_{d})$ satisfying the properties given by Proposition \ref{prop:choosingabasisforadiagonalmatix}.

\begin{definition}
    Write $M_\bfc$ for the $\Lambda_v$-module generated by $ c_1,\ldots ,c_{d}$.  We define for each integer $n\ge0$ the following $\ccO_v$-modules
\begin{align*}
    \ccY_v'(A/K_n)&:=\coker \left( H^1_\iw(K_\infty,T_v)_{\Gamma_n} \longrightarrow  \bigoplus_{i=1}^d H^1_{/\bff}(K_{n,\frap_i},T_v)\right),\\
     \ccY_v''(A/K_n)&:=\coker \left(\left(M_\bfc \right)_{\Gamma_n} \longrightarrow  \bigoplus_{i=1}^dH^1_{/\bff}(K_{n,\frap_i},T_v)\right).
\end{align*}
\end{definition}

\begin{prop}\label{prop:kobayashirankofY''}
    For $n\gg 0$, $\nabla\ccY''_v(A/K_n)$ is defined. Moreover, there exists $\bfs=(s_i)_{1\leq i\leq d}\in \{\sharp,\flat\}^d$ that depends only on the parity of $n$ such that, for $n\gg 0$, we have
    \[  \nabla\ccY''_v(A/K_n)=     F_v(\bfs,n)+ 
    \nabla\left(\frac{\Lambda_v^{\oplus d}}{\col_\bfs\left(M_\bfc\right)} \right)_{\Gamma_n}
    , \]
    where
    \[
         F_v(\bfs,n)= 
              \begin{cases}
    \phi(p^n)\cdot\left( a_\sharp \cdot \left( r_p + \displaystyle{\sum_{k=1}^{\tfrac{n-1}{2}}}\frac{1}{p^{2k}} \right) + a_\flat \cdot \left(  \displaystyle{\sum_{k=1}^{\tfrac{n-1}{2}}}\frac{1}{p^{2k-1}}\right) \right)   & \text{ if } n \text{ is odd,}\\
   \phi(p^n)\cdot\left( a_\sharp \cdot \left(  \displaystyle{\sum_{k=1}^{\tfrac{n}{2}}}\frac{1}{p^{2k-1}} \right) + a_\flat \cdot \left(  r_p + \displaystyle{\sum_{k=1}^{\tfrac{n}{2}-1}}\frac{1}{p^{2k}} \right) \right)    & \text{ if } n \text{ is even,}
    \end{cases}
         \]
        where $a_\sharp$ (resp. $a_\flat$) is the number of $s_i=\sharp$ (resp. $s_i=\flat$).
\end{prop}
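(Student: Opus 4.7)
The proof proceeds by exploiting the near-diagonal structure afforded by $\bfc=(c_1,\dots,c_d)$. Since $\loc_{\frap_j}(c_i)=0$ for $i\neq j$ by Proposition~\ref{prop:choosingabasisforadiagonalmatix}, the cokernel decomposes as
\[
\ccY''_v(A/K_n)\cong\bigoplus_{i=1}^d\frac{H^1_{/\bff}(K_{n,\frap_i},T_v)}{M_{i,n}},
\]
where $M_{i,n}$ is the $\Lambda_{v,n}$-module generated by the natural image of $c_i$. By the additivity of Kobayashi rank (Lemma~\ref{kobayashirankzero}(i)), it suffices to compute each summand and sum them.

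We will apply Lemma~\ref{lem:choosingaunit}(ii) simultaneously at $i=1,\dots,d$; this is possible because it provides infinitely many valid $u\in\ccO_v^\times$ per index, so a common choice exists. For such a $u$, Corollary~\ref{cor:kobayshirankofcolemanwithunit} gives
\[
\nabla\left(\frac{H^1_{/\bff}(K_{n,\frap_i},T_v)}{M_{i,n}}\right)=\ord_{\epsilon_n}\left(\col_{v,n,i}^u(c_i)(\epsilon_n)\right)
\]
for $n\gg0$, provided the natural projection between adjacent quotients has finite kernel; this will follow once $n$ is sufficiently large from the nonvanishing $\col^{s_i}_{v,i}(c_i)\neq0$ established below. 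Proposition~\ref{prop:orderofunitcolemanmap} then rewrites the right-hand side as
\[
\ord_{\epsilon_n}(H_n^{s_i}(\epsilon_n))+\ord_{\epsilon_n}(\col^{s_i}_{v,i}(c_i)(\epsilon_n)),
\]
where $s_i\in\{\sharp,\flat\}$ depends only on the parity of $n$.

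Summing over $i$, the first terms contribute $F_v(\bfs,n)$ via the explicit formulas for $\ord_{\epsilon_n}(H_n^\sharp(\epsilon_n))$ and $\ord_{\epsilon_n}(H_n^\flat(\epsilon_n))$ recalled earlier (with $a_\sharp,a_\flat$ counting the signs in $\bfs$). For the second sum, the same vanishing $\loc_{\frap_j}(c_i)=0$ for $j\neq i$ forces $\col_\bfs(c_i)\in\Lambda_v^{\oplus d}$ to have only its $i$-th coordinate nonzero, equal to $\col^{s_i}_{v,i}(\loc_{\frap_i}(c_i))$; hence
\[
\frac{\Lambda_v^{\oplus d}}{\col_\bfs(M_\bfc)}\cong\bigoplus_{i=1}^d\frac{\Lambda_v}{(\col^{s_i}_{v,i}(c_i))},
\]
and summand-wise application of Lemma~\ref{kobayashirankandcharacteristicpolynomial}(i) identifies $\sum_i\ord_{\epsilon_n}(\col^{s_i}_{v,i}(c_i)(\epsilon_n))$ with $\nabla\left(\Lambda_v^{\oplus d}/\col_\bfs(M_\bfc)\right)_{\Gamma_n}$, yielding the stated formula.

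The main technical point is to ensure $\col^{s_i}_{v,i}(c_i)\neq0$ so that the above Kobayashi ranks are defined for $n\gg0$. This follows from Hypothesis~\ref{cotorsionconjecture}: by the Poitou--Tate sequence~\eqref{eqn:poitoutate} combined with Theorem~\ref{thm:signedcolemanmaps}, the composite $\col_\bfs\colon H^1_\iw(K_\infty,T_v)\to\Lambda_v^{\oplus d}$ has $\Lambda_v$-torsion cokernel. Since $H^1_\iw(K_\infty,T_v)/M_\bfc$ is itself $\Lambda_v$-torsion by Proposition~\ref{prop:choosingabasisforadiagonalmatix}, the diagonal entries $\col^{s_i}_{v,i}(c_i)$ are all forced to be nonzero, which simultaneously ensures the finite-kernel hypothesis of Corollary~\ref{cor:kobayshirankofcolemanwithunit} for $n\gg0$.
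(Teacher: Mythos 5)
Your proof follows essentially the same approach as the paper's: the same direct-sum decomposition of $\ccY''_v(A/K_n)$ coming from $\loc_{\frap_j}(c_i)=0$ for $i\neq j$, the same invocation of Lemma~\ref{lem:choosingaunit}, Corollary~\ref{cor:kobayshirankofcolemanwithunit}, and Proposition~\ref{prop:orderofunitcolemanmap}, and the same appeal to the Poitou--Tate sequence and Hypothesis~\ref{cotorsionconjecture} to guarantee nonvanishing of $\col^{s_i}_{v,i}(c_i)$. The one thing you do that the paper leaves implicit is to spell out why $\sum_i\ord_{\epsilon_n}\bigl(\col^{s_i}_{v,i}(c_i)(\epsilon_n)\bigr)$ equals $\nabla\bigl(\Lambda_v^{\oplus d}/\col_\bfs(M_\bfc)\bigr)_{\Gamma_n}$, by observing the block-diagonal form of $\col_\bfs(M_\bfc)$ and invoking Lemma~\ref{kobayashirankandcharacteristicpolynomial}(i); this is a genuinely helpful clarification, though you should double-check the $e_v$ normalization there against Corollary~\ref{cor:kobayshirankofcolemanwithunit}, since Lemma~\ref{kobayashirankandcharacteristicpolynomial}(i) carries a factor of the ramification index while the corollary as stated does not.
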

\begin{proof}
The proof is similar to the proof of \cite[Proposition 5.3]{LL22}. By definition, we have
\begin{equation}\label{eqn:directsumforY''}
     \ccY''_v(A/K_n)= \bigoplus_{i=1}^d\frac{H^1_{/\bff}(K_{n,\frap_i},T_v)}{(M_i)_{\Gamma_n}},
\end{equation}
where $M_i$ denote the image of $M_\bfc$ in $\HIw(K_{\infty,\frap_i},T_v)$. Let $\pi_i$ denote the natural projection
   \begin{equation*}
        \pi_i: \frac{H^1_{/\bff}(K_{n,\frap_i}, T_v)}{(M_{i})_{\Gamma_n}} \longrightarrow \frac{H^1_{/\bff}(K_{n-1,\frap_i}, T_v)}{(M_{i})_{\Gamma_{n-1}}}.
   \end{equation*}
   We deduce from Corollary \ref{cor:kobayshirankofcolemanwithunit} that $\ker \pi_i$ is finite if and only if $\ord_{\epsilon_n}\left(\col_{v,n,i}^{u}(c_i)(\epsilon_n)\right)$ does not vanish at $\epsilon_n$.

   Let  $\bfs=(s_i)_{1\leq i\leq d}\in \{\sharp,\flat\}^d$ be given as in Proposition \ref{prop:orderofunitcolemanmap}, which depends only on the parity of $n$. It follows from the exact sequence \eqref{eqn:exactsequencepoitoutate} and Hypothesis \ref{cotorsionconjecture} that  $\col_{v,n,i}^{u}(c_i)$ is a non-zero element of $\Lambda_{v,n}$. In particular, for $n\gg 0$, we have  $\col_{v,n,i}^{u}(c_i)(\epsilon_n)\neq 0$. It then follows that $\ker \pi_i$ is finite and $\displaystyle\nabla\left( \frac{H^1_{/\bff}(K_{n,\frap_i},T_v)}{(M_i)_{\Gamma_n}} \right)$ is defined. It follows from Proposition \ref{prop:orderofunitcolemanmap} and Corollary \ref{cor:kobayshirankofcolemanwithunit} that
   \begin{equation*}
       \nabla\left( \frac{H^1_{/\bff}(K_{n,\frap_i},T_v)}{(M_i)_{\Gamma_n}} \right)=\ord_{\epsilon_n}\left( H_n^{s_i}(\epsilon_n)\right)+ \ord_{\epsilon_n}\left( \col_{v,i}^{s_i}(c_i)(\epsilon_n)\right).
   \end{equation*}
   The assertion of the proposition now follows from Lemma \ref{kobayashirankzero} applied to the direct sum \eqref{eqn:directsumforY''}.
\end{proof}

\begin{remark}
    If $p$ is an odd prime and $E/\bbQ$ is a $p$-supersigular elliptic curve with $a_p(E) = 0$, then we have
 \[
         F_v(\bfs,n)= 
             \begin{cases}
    \phi(p^n)\cdot\left(  \displaystyle{\sum_{k=1}^{\tfrac{n-1}{2}}}\frac{1}{p^{2k-1}}\right)    & \text{ if } n \text{ is odd,}\\
   \phi(p^n) \cdot \left(  \displaystyle{\sum_{k=1}^{\tfrac{n}{2}}}\frac{1}{p^{2k-1}} \right)     & \text{ if } n \text{ is even.}
    \end{cases}
         \]
\end{remark}

\begin{corollary}\label{cor:kobayashirankofY}
 For $n\gg0$, $\nabla \ccY_v(A/K_n)$ is defined and is equal to
      \[  \nabla\ccY_v(A/K_n)=   F_v(\bfs,n)+\nabla \left(\coker \left(\col_\bfs\right)\right)_{\Gamma_n}, \]
    where $\bfs=(s_i)_{1\leq i\leq d}\in \{\sharp,\flat\}^d$ and $F_v(\bfs,n)$ are given in Proposition \ref{prop:kobayashirankofY''}.
\end{corollary}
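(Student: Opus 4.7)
The plan is to bridge $\ccY_v(A/K_n)$ and $\ccY''_v(A/K_n)$, whose Kobayashi rank is already computed in Proposition~\ref{prop:kobayashirankofY''}, via an intermediate auxiliary module, and then to convert the contribution coming from $M_\bfc$ into the contribution coming from $\coker(\col_\bfs)$ using the injectivity of $\col_\bfs$. To this end, introduce
\[
\ccY'_v(A/K_n):=\coker\!\Bigl(H^1_{\iw}(K_\infty,T_v)_{\Gamma_n}\longrightarrow \bigoplus_{i=1}^d H^1_{/\bff}(K_{n,\frap_i},T_v)\Bigr).
\]
First I would compare $\ccY'_v$ and $\ccY_v$: the natural projection $H^1_{\iw}(K_\infty,T_v)_{\Gamma_n}\to H^1(G_\Sigma(K_n),T_v)$ has kernel and cokernel of bounded $\ccO_v$-length as $n\to\infty$. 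This follows from the Hochschild--Serre spectral sequence together with $A(K_\infty)[p^\infty]=0$ (established in the proof of Lemma~\ref{injectivityoflogarithm}). Hence, by applying the snake lemma to the commutative square whose horizontal arrows both land in $\bigoplus_i H^1_{/\bff}(K_{n,\frap_i},T_v)$, we obtain a surjection $\ccY'_v\twoheadrightarrow\ccY_v$ with finite kernel of bounded order, and Lemma~\ref{kobayashirankzero}(ii) gives $\nabla\ccY_v=\nabla\ccY'_v$ for $n\gg 0$.

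Next I would compare $\ccY'_v$ with $\ccY''_v$. Writing $Q:=H^1_{\iw}(K_\infty,T_v)/M_\bfc$, which is $\Lambda_v$-torsion by Proposition~\ref{prop:choosingabasisforadiagonalmatix}, the short exact sequence
\[
0\longrightarrow M_\bfc\longrightarrow H^1_{\iw}(K_\infty,T_v)\longrightarrow Q\longrightarrow 0
\]
gives, after taking $\Gamma_n$-coinvariants, a right-exact sequence whose left-hand error term is a subquotient of $Q^{\Gamma_n}$. Since $Q$ is $\Lambda_v$-torsion, $Q^{\Gamma_n}$ is finite of bounded order for $n\gg 0$, so its Kobayashi rank vanishes by Lemma~\ref{kobayashirankzero}(ii). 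Combining this with the snake lemma applied to the diagram relating $(M_\bfc)_{\Gamma_n}\to\bigoplus H^1_{/\bff}(K_{n,\frap_i},T_v)$ and $H^1_{\iw}(K_\infty,T_v)_{\Gamma_n}\to\bigoplus H^1_{/\bff}(K_{n,\frap_i},T_v)$ gives a surjection $\ccY''_v\twoheadrightarrow\ccY'_v$ whose kernel agrees, up to a $\nabla$-trivial error, with $Q_{\Gamma_n}$. Additivity (Lemma~\ref{kobayashirankzero}(i)) then yields $\nabla\ccY'_v=\nabla\ccY''_v-\nabla(Q_{\Gamma_n})$ for $n\gg 0$.

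To compute $\nabla(Q_{\Gamma_n})$, I would use that $\col_\bfs$ is injective on $H^1_{\iw}(K_\infty,T_v)$; this is shown in the proof of Lemma~\ref{lem:localizationisinjective} (the kernel of the first map in \eqref{eqn:exactsequencepoitoutate} is trivial since $H^1_{\iw}(K_\infty,T_v)$ is $\Lambda_v$-torsion free). Consequently, $\col_\bfs$ induces an isomorphism $Q\xrightarrow{\sim}\col_\bfs(H^1_{\iw}(K_\infty,T_v))/\col_\bfs(M_\bfc)$, fitting into the short exact sequence
\[
0\longrightarrow Q\longrightarrow \Lambda_v^{\oplus d}/\col_\bfs(M_\bfc)\longrightarrow \coker(\col_\bfs)\longrightarrow 0.
\]
Applying $(-)_{\Gamma_n}$, the same argument as before (noting that $\coker(\col_\bfs)$ is $\Lambda_v$-torsion by \eqref{eqn:poitoutate} and Hypothesis~\ref{cotorsionconjecture}) gives
\[
\nabla(Q_{\Gamma_n})=\nabla\!\left(\frac{\Lambda_v^{\oplus d}}{\col_\bfs(M_\bfc)}\right)_{\!\Gamma_n}-\nabla\bigl(\coker(\col_\bfs)\bigr)_{\Gamma_n}.
\]
Substituting the formula of Proposition~\ref{prop:kobayashirankofY''} for $\nabla\ccY''_v$, the terms involving $\Lambda_v^{\oplus d}/\col_\bfs(M_\bfc)$ cancel, leaving $\nabla\ccY_v=F_v(\bfs,n)+\nabla(\coker(\col_\bfs))_{\Gamma_n}$, as claimed.

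The main obstacle is the careful bookkeeping of error terms that arise from the failure of coinvariants to be left exact and from the control theorem in the first step; all such error terms must be shown to have vanishing $\nabla$ for $n\gg 0$, which I would handle uniformly using the $\Lambda_v$-torsion hypothesis on the various auxiliary modules combined with Lemma~\ref{kobayashirankzero}(ii).
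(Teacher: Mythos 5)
Your proposal follows essentially the same route as the paper's proof: you introduce the same intermediate module $\ccY'_v$, reduce $\nabla\ccY_v$ to $\nabla\ccY'_v$, relate $\ccY'_v$ and $\ccY''_v$ through the torsion quotient $Q=H^1_\iw(K_\infty,T_v)/M_\bfc$, and then compute $\nabla(Q_{\Gamma_n})$ by embedding $Q$ into $\Lambda_v^{\oplus d}/\col_\bfs(M_\bfc)$ via the injectivity of $\col_\bfs$ (as established in the proof of Lemma~\ref{lem:localizationisinjective}), exactly matching the paper's two short exact sequences. The only cosmetic difference is that you sketch the comparison $\nabla\ccY_v=\nabla\ccY'_v$ via Hochschild--Serre and a snake-lemma argument, whereas the paper invokes this as an adaptation of \cite[Proposition~10.6(i)]{kob03} and \cite[Proposition~3.6]{LL22}, and you are slightly more explicit about the finite $\Tor$/coinvariant error terms that the paper suppresses.
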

\begin{proof}
         Consider the following short exact sequences
         \begin{align*}
    0&\longrightarrow \frac{H^1_\iw(K_\infty,T_v)_{\Gamma_n}}{\left(M_\bfc\right)_{\Gamma_n}}\longrightarrow  \left(\frac{\Lambda_v^{\oplus d}}{\col_\bfs\left(M_\bfc\right)} \right)_{\Gamma_n}   \longrightarrow  \left(\coker \left(\col_\bfs\right)\right)_{\Gamma_n}  \longrightarrow 0,\\
    0&\longrightarrow \frac{H^1_\iw(K_\infty,T_v)_{\Gamma_n}}{\left(M_\bfc\right)_{\Gamma_n}}\longrightarrow \ccY''_v(A/K_n)\longrightarrow\ccY'_v(A/K_n)\longrightarrow 0.
\end{align*}
 Furthermore, $\displaystyle\frac{H^1_\iw(K_\infty,T_v)_{\Gamma_n}}{\left(M_\bfc\right)_{\Gamma_n}}$ is isomorphic to $\displaystyle\left(\frac{H^1_\iw(K_\infty,T_v)}{M_\bfc}\right)_{\Gamma_n}$. Since $\displaystyle\frac{H^1_\iw(K_\infty,T_v)}{M_\bfc}$ is $\Lambda_v$-torsion, we see that $\displaystyle\nabla  \frac{H^1_\iw(K_\infty,T_v)_{\Gamma_n}}{\left(M_\bfc\right)_{\Gamma_n}}$ is defined for $n\gg 0$. It then follows from Proposition \ref{prop:kobayashirankofY''} and Lemma \ref{kobayashirankzero} that $\nabla\ccY_v'(A/K_n)$ is defined and is equal to

 \begin{equation*}
     \nabla \frac{H^1_\iw(K_\infty,T_v)_{\Gamma_n}}{\left(M_\bfc\right)_{\Gamma_n}}= \nabla \ccY''_v(A/K_n)-\nabla\ccY'_v(A/K_n).
 \end{equation*}

 It follows from Proposition \ref{prop:kobayashirankofY''} that $\displaystyle\nabla \left(\frac{\Lambda_v^{\oplus d}}{\col_\bfs\left(M_\bfc\right)} \right)_{\Gamma_n} $ is defined for $n\gg 0$. Thus, Proposition \ref{kobayashirankzero} tells us that
 \begin{equation*}
  \nabla\left(\coker \left(\col_\bfs\right)\right)_{\Gamma_n}= \nabla \left(\frac{\Lambda_v^{\oplus d}}{\col_\bfs\left(M_\bfc\right)} \right)_{\Gamma_n} -\nabla   \frac{H^1_\iw(K_\infty,T_v)_{\Gamma_n}}{\left(M_\bfc\right)_{\Gamma_n}} .
 \end{equation*}

By an adaptation of \cite[Proposition 10.6 i)]{kob03} and \cite[Proposition 3.6]{LL22} to our setting that, we have for $n\gg0$
\begin{equation*}
    \nabla \ccY_v(A/K_n)= \nabla \ccY'_v(A/K_n).
\end{equation*}
 The assertion of the corollary now follows from combining the last three equalities.
\end{proof}

\subsection{Final steps of the proof of Theorem \ref{maintheorem}}

We are now ready to conclude the proof of Theorem \ref{maintheorem}. Recall that $\ccX_v(A/K_\infty)$ and $\ccX^0_v(A/K_\infty)$ denote the Pontryagin duals of $\Sel_{v}(A/K_\infty)$ and  $ \Sel^0_v(A/K_\infty)$, respectively. 

\vspace{1.3mm}

\begin{lemma}\label{controltheorem}
 The natural restriction map
    \begin{equation*}
         \Sel^0_v(A/K_n) \longrightarrow  \Sel^0_v(A/K_\infty)^{\Gamma_n}
    \end{equation*}
    is injective and has finite cokernel. Furthermore, the cardinality of the cokernel stabilizes as $n\rightarrow\infty$.
\end{lemma}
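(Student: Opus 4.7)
The plan is to apply the snake lemma to the commutative diagram
\[
\begin{tikzcd}[column sep=small]
0 \arrow[r] & \Sel^0_v(A/K_n) \arrow[d, "s_n"] \arrow[r] & H^1(G_\Sigma(K_n), A[\varpi^\infty]) \arrow[d, "h_n"] \arrow[r] & \displaystyle\prod_{\fraq_n\in \Sigma(K_n)} H^1(K_{n,\fraq_n}, A[\varpi^\infty]) \arrow[d, "g_n"] \\
0 \arrow[r] & \Sel^0_v(A/K_\infty)^{\Gamma_n} \arrow[r] & H^1(G_\Sigma(K_\infty), A[\varpi^\infty])^{\Gamma_n} \arrow[r] & \left(\displaystyle\prod_{\fraq\in \Sigma(K_\infty)} H^1(K_{\infty,\fraq}, A[\varpi^\infty])\right)^{\Gamma_n}
\end{tikzcd}
\]
with exact rows and natural restriction maps as vertical arrows. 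The essential preliminary input is that $A(K_\infty)[\varpi^\infty]=0$: since each prime $\frap_i\mid p$ is totally ramified in $K_\infty/K$ by Hypothesis~\ref{splittingtype}, the completion $K_{\infty,\frap_i}$ is contained in the field $L_\infty$ considered in Section~\ref{sec:signedobjects}, and $A(L_\infty)[p^\infty]=0$ by the argument in the proof of Lemma~\ref{injectivityoflogarithm}.

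The first step is to use the five-term inflation-restriction sequence applied to $1\to G_\Sigma(K_\infty)\to G_\Sigma(K_n)\to \Gamma_n\to 1$ to conclude that $h_n$ is an isomorphism, as both $H^1(\Gamma_n,A(K_\infty)[\varpi^\infty])$ and $H^2(\Gamma_n,A(K_\infty)[\varpi^\infty])$ vanish. The snake lemma then yields $\ker(s_n)=0$ together with an embedding $\coker(s_n)\hookrightarrow \ker(g_n)$, giving the injectivity half of the lemma immediately.

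The second step is to analyze the local kernels. Writing $\ker(g_n)=\prod_{\fraq_n}\ker(g_{n,\fraq_n})$ and applying inflation-restriction locally gives $\ker(g_{n,\fraq_n})\cong H^1(\Gamma_{n,\fraq},A(K_{\infty,\fraq})[\varpi^\infty])$ for any chosen prime $\fraq$ of $K_\infty$ above $\fraq_n$. At primes above $p$ this vanishes by the same reasoning as above. At primes $\fraq\in\Sigma$ not above $p$, since a $\bbZ_p$-extension of a number field is unramified outside $p$, the decomposition group $\Gamma_\fraq\subseteq\Gamma$ is either trivial or isomorphic to $p^{k_\fraq}\bbZ_p$; in either case $A(K_{\infty,\fraq})[\varpi^\infty]$ is a finite $\ccO_v$-module (the relevant Frobenius eigenvalues on $T_\varpi A$ are not equal to $1$), so $\ker(g_{n,\fraq_n})$ is finite, and consequently $\coker(s_n)$ is finite.

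Finally, for the stabilization, once $n\geq \max_{\fraq} k_\fraq$ the subgroups $\Gamma_{n,\fraq}$, the modules $A(K_{\infty,\fraq})[\varpi^\infty]$, and the number of primes of $K_n$ above each $\fraq\in\Sigma$ all become independent of $n$, so $|\ker(g_n)|$ stabilizes. A compatibility argument for the snake-lemma embeddings $\coker(s_n)\hookrightarrow \ker(g_n)$ across the tower (equivalently, combining surjectivity of $s_n^\vee\colon (\ccX^0_v(A/K_\infty))_{\Gamma_n}\twoheadrightarrow \Sel^0_v(A/K_n)^\vee$ with the torsion structure of $\ccX^0_v(A/K_\infty)$ from Proposition~\ref{consequenceofcotorsionness}) then forces $|\coker(s_n)|$ itself to stabilize. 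The main technical point to verify is the finiteness of $A(K_{\infty,\fraq})[\varpi^\infty]$ at primes of bad reduction, together with the bookkeeping in the stabilization step.
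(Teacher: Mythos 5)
The paper disposes of this lemma by citing \cite[Theorem~3.3]{Lim20}, whereas you give a direct proof via the snake lemma and inflation–restriction. Your overall strategy — identify the global kernel/cokernel of $h_n$, then reduce to the local kernels $\ker(g_{n,\fraq_n})\cong H^1(\Gamma_{n,\fraq}, A(K_{\infty,\fraq})[\varpi^\infty])$ — is the standard way such control theorems are proved, and the handling of the global term (using $A(K_\infty)[\varpi^\infty]=0$ via $K_{\infty,\frap}\subset L_\infty$) and of the primes above $p$ is correct.

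The gap is in the local analysis at primes $\fraq\in\Sigma$ not lying above $p$. You claim $A(K_{\infty,\fraq})[\varpi^\infty]$ is a \emph{finite} $\ccO_v$-module because "the relevant Frobenius eigenvalues on $T_\varpi A$ are not equal to $1$". This is not correct in general: when $\Gamma_\fraq\ne 0$, the field $K_{\infty,\fraq}$ is an (unramified) $\bbZ_p$-extension of a local field, and $A(K_{\infty,\fraq})[\varpi^\infty]$ can acquire a nontrivial divisible part. Indeed, setting $M=A(K_{\infty,\fraq})[\varpi^\infty]$, one has $T(M_{\mathrm{div}})=(T_\varpi A)^{G_{K_{\infty,\fraq}}}$, and a topological generator of $\Gamma_\fraq$ (the image of Frobenius) acts on this lattice through the pro-$p$ part of $\overline{\langle\mathrm{Frob}\rangle}\subset\gl(\ccO_v)$, hence with eigenvalues $\equiv 1\pmod{\fram_v}$; whenever such an eigenvalue is $\ne 1$ but $\equiv 1\pmod{\fram_v}$, the union $\bigcup_n A(K_{n,\fraq_n})[\varpi^\infty]$ is infinite. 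What is actually true, and what you need, is that $H^1(\Gamma_{n,\fraq},M)$ is finite and stabilizes. This follows from a different, more robust argument: $A(K_\fraq)[\varpi^\infty]=M^{\Gamma_\fraq}$ is finite (torsion of an abelian variety over a local field), hence $M_{\mathrm{div}}^{\Gamma_\fraq}$ is finite, hence $\gamma_\fraq-1$ is injective on $T(M_{\mathrm{div}})$, hence $\gamma_\fraq-1$ is \emph{surjective} on the divisible module $M_{\mathrm{div}}$, so $(M_{\mathrm{div}})_{\Gamma_{n,\fraq}}=0$ for every $n$. Then $H^1(\Gamma_{n,\fraq},M)\cong (M/M_{\mathrm{div}})_{\Gamma_{n,\fraq}}$, which is a quotient of the finite module $M/M_{\mathrm{div}}$ and is literally equal to $M/M_{\mathrm{div}}$ once $n$ is large enough that $\Gamma_{n,\fraq}$ acts trivially on it — giving both finiteness and stabilization. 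So the conclusion and the route are sound, but the key local lemma you state is false as written and needs to be replaced by the coinvariants computation above; as given, your proof does not establish finiteness or stabilization of the cokernel at a prime $\fraq\nmid p$ with nontrivial decomposition group.
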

\begin{proof}
The assertion follows from \cite[Theorem 3.3]{Lim20}.
\end{proof}

\begin{lemma}\label{kobayashirankofX0}
Assume that Hypothesis~\ref{cotorsionconjecture} holds.   Let $n\geq 0$ be an integer.
    \begin{enumerate}[label=(\roman*)]
        \item We have the following short exact sequence:
        \begin{equation*}
            0 \longrightarrow \ccY_v(A/K_n) \longrightarrow \ccX_v(A/K_n) \longrightarrow \ccX^0_v(A/K_n) \longrightarrow 0.
        \end{equation*}

\vspace{1.5mm}
        
        \item  For $n\gg 0$, $\nabla \ccX^0_v(A/K_n)$ is defined and satisfies the equality
        \begin{equation*}
            \nabla \ccX^0_v(A/K_n)= \nabla \ccX^0_v(A/K_\infty)_{\Gamma_n}.
        \end{equation*}
    \end{enumerate}
\end{lemma}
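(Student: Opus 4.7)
For part (i), the plan is to invoke the finite-level analogue of the Poitou--Tate sequence \eqref{eqn:poitoutate}, replacing $K_\infty$ by $K_n$ and Iwasawa cohomology by ordinary Galois cohomology. This gives
\begin{equation*}
H^1(G_\Sigma(K_n),T_v) \longrightarrow \bigoplus_{i=1}^d \dfrac{H^1(K_{n,\frap_i}, T_v)}{A(K_{n,\frap_i})\otimes_{\ccO_F}\ccO_v} \longrightarrow \ccX_v(A/K_n) \longrightarrow \ccX^0_v(A/K_n) \longrightarrow 0.
\end{equation*}
The short exact sequence in (i) then drops out immediately from the definition of $\ccY_v(A/K_n)$ as the cokernel of the first arrow.

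For part (ii), my plan has three steps. First, I invoke the control theorem (Lemma~\ref{controltheorem}): the restriction map $\Sel^0_v(A/K_n) \hookrightarrow \Sel^0_v(A/K_\infty)^{\Gamma_n}$ is injective with finite cokernel whose cardinality stabilises for $n\gg 0$. Applying Pontryagin duality, and using the standard identification of the dual of $\Gamma_n$-invariants on a discrete cofinitely generated module with $\Gamma_n$-coinvariants of the dual, this produces a short exact sequence of projective systems
\begin{equation*}
0 \longrightarrow W_n \longrightarrow \ccX^0_v(A/K_\infty)_{\Gamma_n} \longrightarrow \ccX^0_v(A/K_n) \longrightarrow 0,
\end{equation*}
where $(W_n)_{n\ge 0}$ consists of finite $\ccO_v$-modules whose orders are eventually constant.

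Second, by Hypothesis~\ref{cotorsionconjecture} and the Poitou--Tate sequence \eqref{eqn:poitoutate} the module $\ccX^0_v(A/K_\infty)$ is $\Lambda_v$-torsion, so Lemma~\ref{kobayashirankandcharacteristicpolynomial}(ii) guarantees that $\nabla\ccX^0_v(A/K_\infty)_{\Gamma_n}$ is defined for $n\gg 0$. Third, I show $\nabla W_n = 0$ for $n\gg 0$: since $W_n$ is finite, $\rank_{\ccO_v} W_{n-1}=0$, and from the four-term exact sequence $0 \to \ker\pi_n \to W_n \to W_{n-1} \to \coker\pi_n \to 0$ together with $|W_n| = |W_{n-1}|$ for $n\gg 0$ we obtain $\length_{\ccO_v}(\ker\pi_n) = \length_{\ccO_v}(\coker\pi_n)$. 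Combining with the additivity of Kobayashi ranks (Lemma~\ref{kobayashirankzero}(i)) then yields the required equality $\nabla\ccX^0_v(A/K_n)=\nabla\ccX^0_v(A/K_\infty)_{\Gamma_n}$.

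The main delicate point is the duality step: one needs to verify that $(W_n)_{n\ge 0}$ genuinely forms a projective system compatible with the natural corestriction-dual transition maps on both $\ccX^0_v(A/K_\infty)_{\Gamma_n}$ and $\ccX^0_v(A/K_n)$, so that additivity of $\nabla$ applies termwise and so that stabilisation of the cokernel cardinalities in Lemma~\ref{controltheorem} translates into stabilisation of $|W_n|$. This follows from the naturality of restriction and of Pontryagin duality in $n$, but it needs to be tracked carefully; everything else is a formal consequence.
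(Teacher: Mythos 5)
Your proposal is correct and follows essentially the same route as the paper: part (i) comes directly from the finite-level Poitou--Tate sequence (which is \eqref{eqn:poitoutateovern}, modulo a typo there where $K_\infty$ should read $K_n$ in the last two terms) plus the definition of $\ccY_v(A/K_n)$, and part (ii) combines Lemma~\ref{controltheorem} with the torsionness of $\ccX^0_v(A/K_\infty)$ and Lemma~\ref{kobayashirankandcharacteristicpolynomial}(ii). The only difference is that you spell out the final step more explicitly via the finite projective system $(W_n)$ with eventually constant order, which the paper leaves as an implicit consequence of the control theorem together with Lemma~\ref{kobayashirankzero}; your extra care about compatibility of the transition maps is warranted but is a routine naturality check.
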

\begin{proof}
   Assertion (i) follows from the exact sequence \eqref{eqn:poitoutateovern} and the definition of $\ccY_v(A/K_n)$. We saw in the proof of Proposition \ref{consequenceofcotorsionness} that $\ccX^0_v(A/K_\infty)$ is $\Lambda_{v}$-torsion. By Lemma~\ref{controltheorem}, the kernel and cokernel of the natural map
    \begin{equation*}
        \ccX^0_v(A/K_\infty)_{\Gamma_n} \longrightarrow \ccX^0_v(A/K_n)
    \end{equation*}
    are finite with cardinality independent of $n$ when $n$ is sufficiently large. Combining this with Lemma \ref{kobayashirankandcharacteristicpolynomial}(ii), the second assertion follows.
\end{proof}

Let $e_v$ denote the ramification index of $F_v/\Qp$.

 \begin{prop}\label{prop:nablaX}
 Suppose that Hypothesis~\ref{cotorsionconjecture} holds. For $n\gg 0$, we have
    \begin{equation*}
       \nabla \ccX_v(A/K_n)=  F_v(\bfs,n)+e_v\lambda_{v,\bfs} + (p^n-p^{n-1})\mu_{v,\bfs},
    \end{equation*}
     where $\bfs=(s_i)_{1\leq i\leq d}\in \{\sharp,\flat\}^d$ and $F_v(\bfs,n)$ are given in Proposition \ref{prop:kobayashirankofY''}, which depend on the parity of $n$.
    \end{prop}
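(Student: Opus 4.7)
The plan is to combine the short exact sequence of Lemma~\ref{kobayashirankofX0}(i) with a short exact sequence extracted from the Poitou--Tate sequence \eqref{eqn:poitoutate}, and then translate everything into Iwasawa invariants via Lemma~\ref{kobayashirankandcharacteristicpolynomial}(ii). For $n \gg 0$, applying Lemma~\ref{kobayashirankzero}(i) to the exact sequence in Lemma~\ref{kobayashirankofX0}(i) and combining with Corollary~\ref{cor:kobayashirankofY} and Lemma~\ref{kobayashirankofX0}(ii) gives
\[
\nabla \ccX_v(A/K_n) = F_v(\bfs,n) + \nabla(\coker\col_\bfs)_{\Gamma_n} + \nabla\ccX^0_v(A/K_\infty)_{\Gamma_n},
\]
so the remaining task is to identify the last two terms as $e_v\lambda_{v,\bfs} + (p^n - p^{n-1})\mu_{v,\bfs}$.

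For this, I would extract a short exact sequence from \eqref{eqn:poitoutate}. Since each $\col_{v,i}^{s_i}$ is surjective by Theorem~\ref{thm:signedcolemanmaps}, with kernel on the Iwasawa level precisely the submodule whose projection to finite level defines $H^1_{s_i}$, the middle term of \eqref{eqn:poitoutate} is canonically identified with $\Lambda_v^{\oplus d}$, and under this identification the first arrow is exactly the map $\col_\bfs$ defined in \eqref{lambdahomomorphism}. The Poitou--Tate sequence therefore collapses to
\[
0 \to \coker(\col_\bfs) \to \ccX^\bfs_v(A/K_\infty) \to \ccX^0_v(A/K_\infty) \to 0.
\]
All three modules are $\Lambda_v$-torsion: $\ccX^\bfs_v(A/K_\infty)$ and $\ccX^0_v(A/K_\infty)$ by Hypothesis~\ref{cotorsionconjecture} together with (the proof of) Proposition~\ref{consequenceofcotorsionness}, and $\coker(\col_\bfs)$ because both $H^1_\iw(K_\infty,T_v)$ and $\Lambda_v^{\oplus d}$ have $\Lambda_v$-rank $d$ by Proposition~\ref{consequenceofcotorsionness}.

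Since characteristic ideals are multiplicative on short exact sequences of torsion $\Lambda_v$-modules, the $\mu$- and $\lambda$-invariants add: $\mu(\coker\col_\bfs) + \mu(\ccX^0_v(A/K_\infty)) = \mu_{v,\bfs}$ and similarly for $\lambda$. Applying Lemma~\ref{kobayashirankandcharacteristicpolynomial}(ii) (using that $M_{\Gamma_n} = M/\omega_n M$) to each of $\coker(\col_\bfs)$ and $\ccX^0_v(A/K_\infty)$ and summing yields
\[
\nabla(\coker\col_\bfs)_{\Gamma_n} + \nabla\ccX^0_v(A/K_\infty)_{\Gamma_n} = e_v\lambda_{v,\bfs} + (p^n - p^{n-1})\mu_{v,\bfs}
\]
for $n \gg 0$, and substituting gives the stated formula. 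The main subtlety lies in verifying the displayed short exact sequence: one must match the first map in \eqref{eqn:poitoutate} with $\col_\bfs$ after identifying the middle term with $\Lambda_v^{\oplus d}$, but this follows formally from the surjectivity in Theorem~\ref{thm:signedcolemanmaps} together with the definition of $H^1_{s_i}$ at the Iwasawa level.
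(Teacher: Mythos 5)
Your argument is correct and reaches the paper's result, but the final step is handled by a genuinely different mechanism. Both you and the paper first reduce, using Lemma~\ref{kobayashirankofX0}, Corollary~\ref{cor:kobayashirankofY} and the short exact sequence extracted from \eqref{eqn:poitoutate}, to the identity
\[
\nabla \ccX_v(A/K_n) = F_v(\bfs,n) + \nabla\bigl(\coker\col_\bfs\bigr)_{\Gamma_n} + \nabla\ccX^0_v(A/K_\infty)_{\Gamma_n}.
\]
The paper then passes through the six-term exact sequence of $\Gamma_n$-invariants and coinvariants of the short exact sequence $0 \to \coker(\col_\bfs) \to \ccX^\bfs_v(A/K_\infty) \to \ccX^0_v(A/K_\infty) \to 0$ (via \cite[Lemma~10.3]{kob03}), argues that the Kobayashi ranks of the three invariant terms vanish (via \cite[Lemma~3.5]{LL22}), concludes $\nabla(\coker\col_\bfs)_{\Gamma_n} + \nabla\ccX^0_v(A/K_\infty)_{\Gamma_n} = \nabla\ccX^\bfs_v(A/K_\infty)_{\Gamma_n}$, and finally applies Lemma~\ref{kobayashirankandcharacteristicpolynomial}(ii) once to $\ccX^\bfs_v(A/K_\infty)$. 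You instead apply Lemma~\ref{kobayashirankandcharacteristicpolynomial}(ii) to $\coker(\col_\bfs)$ and to $\ccX^0_v(A/K_\infty)$ separately and recombine through additivity of $\mu$- and $\lambda$-invariants in short exact sequences of torsion $\Lambda_v$-modules. Your route is shorter and avoids both the six-term sequence and the external vanishing lemma; the paper's route has the mild advantage of establishing along the way that $\nabla\ccX^\bfs_v(A/K_\infty)_{\Gamma_n}$ itself is defined.

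One small inaccuracy: you justify that $\coker(\col_\bfs)$ is $\Lambda_v$-torsion by noting that $H^1_\iw(K_\infty,T_v)$ and $\Lambda_v^{\oplus d}$ both have rank $d$. Equality of ranks of source and target does not by itself force the cokernel to be torsion (that would also require the image to have full rank). The cleanest justification here is simply that $\coker(\col_\bfs)$ embeds into the torsion module $\ccX^\bfs_v(A/K_\infty)$ via the short exact sequence you have already written down, and submodules of torsion modules are torsion. This is a cosmetic fix and does not affect the rest of the argument.
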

         \begin{proof}
 Let $\bfs=(s_i)_{1\leq i\leq d}\in \{\sharp,\flat\}^d$ be given as in Proposition \ref{prop:kobayashirankofY''}. We have the following Poitou--tate exact sequence of $\Lambda_v$-torsion modules:
   \[
 0\longrightarrow \coker\left(\col_\bfs\right) \longrightarrow \ccX_v^\bfs(A/K_\infty) \longrightarrow \ccX_v^0(A/K_\infty) \longrightarrow0.
 \]

 By \cite[Lemma 10.3]{kob03}, we obtain the following six-term exact sequence
  \begin{align*}
 0\longrightarrow  \coker&\left(\col_\bfs\right)^{\Gamma_n} \longrightarrow \ccX_v^\bfs(A/K_\infty)^{\Gamma_n} \longrightarrow \ccX_v^0(A/K_\infty)^{\Gamma_n} \\ 
&\longrightarrow \left(\coker\left(\col_\bfs\right)\right)_{\Gamma_n} \longrightarrow \ccX_v^\bfs(A/K_\infty)_{\Gamma_n} \longrightarrow \ccX_v^0(A/K_\infty)_{\Gamma_n} \longrightarrow0.
  \end{align*}

 By \cite[Lemma 3.5]{LL22}, the Kobayashi ranks of the first three terms vanish for $n\gg 0$. Hence, it follows from Lemma \ref{kobayashirankzero} (i) that, for $n\gg0$, we have
\begin{equation*}
    \nabla\ccX_v^\bfs(A/K_\infty)_{\Gamma_n} = \nabla \left(\coker\left(\col_\bfs\right)\right)_{\Gamma_n}+\nabla \ccX_v^0(A/K_\infty)_{\Gamma_n}.
\end{equation*}
Combining this equality with Lemma \ref{kobayashirankofX0} and Corollary \ref{cor:kobayashirankofY}, we deduce that
\begin{equation*}
    \nabla\ccX_v(A/K_\infty)_{\Gamma_n} =F_v(\bfs,n)+\nabla \ccX_v^\bfs(A/K_\infty)_{\Gamma_n}.
\end{equation*}
The assertion of the proposition now follows from Lemma \ref{kobayashirankandcharacteristicpolynomial} (ii) under Hypothesis~\ref{cotorsionconjecture}.
     \end{proof}

\begin{theorem}
       Let $A$ be an abelian variety of $\gl_2$-type over $\bbQ$ associated to a weight-two newform with trivial nebentype. Let $K$ be a number field and $p$ be an odd prime satisfying Hypothesis \ref{splittingtype}. Furthermore, assume that Hypothesis \ref{cotorsionconjecture} holds. Then the following statements hold.
    \begin{itemize}
        \item[(i)] $\rank_{\ccO_F} A(K_n)$ is bounded independently of $n$;
        \item [(ii)] Assume that $\Sha(A/K_n )[v^\infty]$ is finite for all $n\geq 0$. Define \[\displaystyle r_\infty=\sup_{n\ge0}\left\{ \rank_{\ccO_F} A(K_n)\right\}.\] Then, for $n\gg 0$, we have
        \[\nabla\Sha(A/K_n )[v^\infty]=  F_v(\bfs,n) +(p^n-p^{n-1})\mu_{v,\bfs} + e_v\lambda_{v,\bfs} -r_\infty. \]
    \end{itemize}
\end{theorem}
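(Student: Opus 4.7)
The proof combines the Pontryagin-dual of the defining short exact sequence of the Selmer group,
\begin{equation*}
0\longrightarrow (A(K_n)\otimes_{\ccO_F}F_v/\ccO_v)^\vee\longrightarrow\ccX_v(A/K_n)\longrightarrow\Sha(A/K_n)[v^\infty]^\vee\longrightarrow 0,
\end{equation*}
with the computation of $\nabla\ccX_v(A/K_n)$ provided by Proposition~\ref{prop:nablaX}, and reads off the Kobayashi rank of each piece using the machinery of Section~\ref{kobayashiranks}.

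For part~(i), Proposition~\ref{prop:nablaX} guarantees that $\nabla\ccX_v(A/K_n)$ is defined for $n\gg 0$, so the transition map $\pi_n\colon\ccX_v(A/K_n)\to\ccX_v(A/K_{n-1})$ has finite kernel and cokernel; comparing $\ccO_v$-ranks across $\pi_n$ then yields $\rank_{\ccO_v}\ccX_v(A/K_n)=\rank_{\ccO_v}\ccX_v(A/K_{n-1})$ for $n\gg 0$. Since $A(K_n)[v^\infty]\hookrightarrow A(L_n)[p^\infty]=0$ via the local embedding at any prime $\frap_i$ (compare the proof of Lemma~\ref{injectivityoflogarithm}), the summand $(A(K_n)\otimes_{\ccO_F}F_v/\ccO_v)^\vee$ is free of $\ccO_v$-rank $r_n:=\rank_{\ccO_F}A(K_n)$, whence $\rank_{\ccO_v}\ccX_v(A/K_n)=r_n+\corank_{\ccO_v}\Sha(A/K_n)[v^\infty]$. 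Under Hypothesis~\ref{cotorsionconjecture}, a standard Iwasawa-theoretic argument bounds the Sha corank, forcing $r_n$ to be bounded; one may also appeal directly to \cite[\S3.3]{LP20}.

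For part~(ii), the finiteness of $\Sha(A/K_n)[v^\infty]$ together with the stabilization $r_n=r_\infty$ for $n\gg 0$ ensures that all three terms in the displayed sequence are finitely generated $\ccO_v$-modules, and Lemma~\ref{kobayashirankzero}(i) gives
\begin{equation*}
\nabla\Sha(A/K_n)[v^\infty]^\vee=\nabla\ccX_v(A/K_n)-\nabla(A(K_n)\otimes_{\ccO_F}F_v/\ccO_v)^\vee.
\end{equation*}
The transition map on $(A(K_n)\otimes_{\ccO_F}F_v/\ccO_v)^\vee$ is dual to the inclusion $A(K_{n-1})\otimes_{\ccO_F} F_v/\ccO_v\to A(K_n)\otimes_{\ccO_F} F_v/\ccO_v$, whose kernel is $(A(K_n)/A(K_{n-1}))[v^\infty]$ by the long exact sequence of $\mathrm{Tor}$; this vanishes for $n\gg 0$ once the Mordell--Weil groups $A(K_n)$ stabilize, which follows by combining part~(i), the vanishing $A(K_\infty)[v^\infty]=0$, and the finite generation of $A(K_\infty)$. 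The transition is then an isomorphism of free $\ccO_v$-modules of rank $r_\infty$, giving $\nabla(A(K_n)\otimes_{\ccO_F}F_v/\ccO_v)^\vee=r_\infty$. Substituting Proposition~\ref{prop:nablaX} yields the desired formula for $\nabla\Sha(A/K_n)[v^\infty]^\vee$, and the identical formula holds with $\nabla\Sha(A/K_n)[v^\infty]$ on the left because the Kobayashi rank of a projective system of finite $\ccO_v$-modules depends only on the cardinalities (the final unlabelled lemma of Section~\ref{kobayashiranks}), which are preserved by Pontryagin duality.

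The main obstacle is the stabilization of the Mordell--Weil groups along the tower, which is needed so that $\nabla(A(K_n)\otimes_{\ccO_F}F_v/\ccO_v)^\vee$ equals $r_\infty$ exactly rather than $r_\infty$ minus a nonzero correction from $(A(K_n)/A(K_{n-1}))[v^\infty]$. This rests on combining boundedness from part~(i), the vanishing $A(K_\infty)[v^\infty]=0$, and control of the prime-to-$v$ torsion of $A(K_n)$ along the $p$-extension tower, paralleling the arguments in \cite[\S10]{kob03} and \cite[\S5]{LL22}.
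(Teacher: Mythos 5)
Your proposal follows the same route as the paper: dualize the Mordell--Weil/Selmer/Sha exact sequence, apply additivity of Kobayashi ranks (Lemma~\ref{kobayashirankzero}), and feed in Proposition~\ref{prop:nablaX} to get both the rank bound and the growth formula, leaving only the computation $\nabla\bigl(A(K_n)\otimes_{\ccO_F}F_v/\ccO_v\bigr)^\vee=r_\infty$. You rightly flag the subtlety on which both arguments hinge: the paper asserts this last step in one line ``Assertion (i) tells us that $\nabla A(K_n)\otimes_{\ccO_F}\ccO_v=r_\infty$'' whereas in fact bounded rank alone does not immediately give stabilization of $A(K_n)\otimes_{\ccO_F}\ccO_v$ (one needs, e.g., finite generation of $A(K_\infty)\otimes_{\ccO_F}\ccO_v$ so that the ascending chain terminates, which is what makes the transition maps eventually isomorphisms and kills the torsion correction term $\bigl(A(K_n)/A(K_{n-1})\bigr)\otimes_{\ccO_F}\ccO_v$); your sketch of how to close that gap via $A(K_\infty)[v^\infty]=0$ and the Iwasawa-theoretic finiteness arguments of \cite[\S10]{kob03} and \cite[\S5]{LL22} is the right supplement, and if anything is more honest than the paper's own treatment.
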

\begin{proof}
    Since $\nabla\ccX_v(A/K_n)$ is defined when $n$ is sufficiently large by Proposition~\ref{prop:nablaX}, the kernel and cokernel of the natural map $\ccX_v(A/K_{n+1})\rightarrow \ccX_v(A/K_n)$ are finite for such $n$. In particular, $\rank_{\ccO_v}\ccX_v(A/K_n)$ is bounded independently of $n$. Hence, the first assertion follows from the well-known exact sequence
    \begin{equation}\label{selmergroupexactsequence}
            0 \longrightarrow A(K_n)\otimes_{\ccO_F} F_v/\ccO_v \longrightarrow \Sel_v(A/K_n) \longrightarrow \Sha(A/K_n )[v^\infty] \longrightarrow 0.
        \end{equation}

         It follows from the exact sequence \eqref{selmergroupexactsequence} and Lemma \ref{kobayashirankzero} that
    \begin{equation*}
        \nabla\Sha_v(A/K_n )[v^\infty] = \nabla \ccX_v(A/K_n)-\nabla A(K_n)\otimes_{\ccO_F} \ccO_v.
    \end{equation*}
    Assertion (i) tells us that $\nabla A(K_n)\otimes_{\ccO_F} \ccO_v=r_\infty$ for $n\gg0$.  Thus, assertion (ii) follows from Proposition~\ref{prop:nablaX}.
\end{proof}

\appendix
\section{Honda-type of formal groups}\label{app}
Let $R$ be a commutative ring. Let $n\ge1$ be an integer, and let $\bfx$ be the column vector of $n$ variables $(X_1,\ldots,X_n)^t$. Define $R[[\bfx]]:=R[[X_1,\dots,X_n]]$ to be the ring of formal power series in $n$ variables. Write $R[[\bfx]]^n$ for the set of column vectors $\left(\varphi_1(\bfx),\ldots,\varphi_n(\bfx) \right)^t$ with $\varphi_i(\bfx)\in R[[\bfx]]$. If $\varphi(\bfx),\psi(\bfx)\in R[[\bfx]]^n$ differ only in terms of total degree $\geq r$, we write $\varphi(\bfx)\equiv \psi(\bfx) \mod \deg r$.  We write $R[[\bfx]]^n_0$ for the set $\{f\in R[[\bfx]]^n \mid f(\bfx) \equiv 0 \mod \deg 1 \}$.  

 Define the identity function $i$ of $R[[\bfx]]^n_0$ by $i(\bfx)=\bfx$. If $\varphi(\bfx)$ is an element of $R[[\bfx]]^n_0$ such that $\varphi(\bfx)\equiv P\bfx \mod \deg 2$ for some invertible matrix $P\in M_{n\times n}(R)$, there is a unique element $\varphi^{-1}(\bfx)\in R[[\bfx]]^n_0$ satisfying $\varphi\circ \varphi^{-1} =\varphi^{-1} \circ \varphi =i$. We say that $\varphi$ is invertible and $\varphi^{-1}$ is called the inverse of $\varphi$.

\vspace{1.3mm}

\begin{definition}
     Let $\bfx=(X_1,\dots,X_n)^t$ and $\bfy=(Y_1,\dots,Y_n)^t$ be vectors of $n$ variables. An $n$-\textbf{dimensional formal group} over $R$ is a formal power series $\hatF(\bfx,\bfy)\in R[[\bfx,\bfy]]^n_0$ satisfying the following conditions
\begin{enumerate}[label=(\roman*)]
    \item $\hatF(\bfx,\bfy)\equiv \bfx+\bfy \mod \deg 2 $,
    \item $\hatF(\hatF(\bfx,\bfy),\bfz)=\hatF(\bfx,\hatF(\bfy,\bfz))$,
    \item $\hatF(\bfx,\bfy)=\hatF(\bfy,\bfx)$.
\end{enumerate}

\end{definition}

\begin{definition}
    Let $\hatF$ and $\hatG$ be formal groups over $R$, of dimension $n$ and $m$, respectively. An element $\varphi\in R[[\bfx]]^m_0$ is said to be a homomorphism of $\hatF$ to $\hatG$, if $\varphi$ satisfies $\varphi\circ \hatF=\hatG\circ \varphi$, where $(\hatG\circ \varphi)(\bfx,\bfy)$ stands for $\hatG(\varphi(\bfx),\varphi(\bfy))$. 

    If $m=n$ and $\varphi$ is invertible, $\varphi$ is called an isomorphism. If there is an isomorphism $\varphi$ of
$\hatF$ to $\hatG$ such that $\varphi(\bfx)\equiv \bfx$ mod $\deg 2$, we shall say that $\hatG$ is \textbf{strongly isomorphic}
to $\hatF$ and write $\varphi$ : $\hatF\approx \hatG$ over $S$.
\end{definition}

\begin{theorem}
    Let $\hatF(\bfx,\bfy)$ be an $n$-dimensional formal group over $R$. Then there exists a unique element $\ell \in R[[\bfx]]^n_0$ satisfying
    \begin{equation*}
        \ell(\bfx)\equiv \bfx \mod \deg 2
    \end{equation*}
    and 
    \begin{equation*}
        \hatF(\bfx,\bfy)=\ell^{-1}\left(\ell(\bfx)+\ell(\bfy) \right).
    \end{equation*}
    In particular, $\hatF(\bfx,\bfy)\approx \bfx+\bfy$ over $R$.
\end{theorem}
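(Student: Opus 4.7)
The plan is to establish existence and uniqueness of $\ell$ separately. I will first note that the formula $\hatF(\bfx,\bfy)=\ell^{-1}(\ell(\bfx)+\ell(\bfy))$ forces $\ell$ to be a strong isomorphism from $\hatF$ onto the additive formal group $\bfx+\bfy$, so the ``in particular'' clause is automatic once existence is in hand. As is standard in Honda theory, one should read $\ell$ as lying in $(R\otimes_{\mathbb{Z}}\mathbb{Q})[[\bfx]]^n_0$; the point is that the composition $\ell^{-1}\circ(\ell(\bfx)+\ell(\bfy))$ will still have coefficients in $R$, recovering $\hatF$ honestly over $R$.

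For uniqueness, suppose $\ell_1$ and $\ell_2$ both satisfy the two conditions. Then $\varphi:=\ell_1\circ \ell_2^{-1}$ is a strong endomorphism of the additive formal group; namely, $\varphi(\bfx)\equiv\bfx\bmod \deg 2$ and $\varphi(\bfx+\bfy)=\varphi(\bfx)+\varphi(\bfy)$. Writing $\varphi(\bfx)=\sum_{r\ge 1}\varphi_r(\bfx)$ with $\varphi_r$ homogeneous of degree $r$, the additivity equation applied degree by degree yields $\varphi_r(\bfx+\bfy)=\varphi_r(\bfx)+\varphi_r(\bfy)$ for every $r$. For $r\ge 2$ a standard binomial-coefficient argument (or specialising $\bfx=\bfy$ and comparing) forces $\varphi_r=0$, hence $\varphi(\bfx)=\bfx$ and $\ell_1=\ell_2$.

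For existence, the plan is to use the invariant differential. Let $J(\bfx)$ denote the $n\times n$ matrix $\bigl(\partial \hatF/\partial \bfy\bigr)(\bfx,\mathbf{0})$. Since $\hatF(\bfx,\bfy)\equiv \bfx+\bfy\bmod \deg 2$, we have $J(\mathbf{0})=I_n$, so $J(\bfx)$ is invertible in $M_{n\times n}(R[[\bfx]])$. Associativity of $\hatF$ gives, upon differentiating $\hatF(\hatF(\bfx,\bfy),\bfz)=\hatF(\bfx,\hatF(\bfy,\bfz))$ in $\bfz$ at $\bfz=\mathbf{0}$, the identity $J(\hatF(\bfx,\bfy))=J_1(\bfx,\bfy)\,J(\bfy)$, where $J_1$ is the Jacobian of $\hatF$ in the first block of variables. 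This translates into translation invariance of the 1-form $\omega(\bfx):=J(\bfx)^{-1}\,d\bfx$. I then define $\ell(\bfx)$ componentwise as the formal antiderivative of $\omega$ from $\mathbf{0}$, which exists in $(R\otimes\mathbb{Q})[[\bfx]]^n_0$ and satisfies $\ell(\bfx)\equiv\bfx\bmod \deg 2$.

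The remaining and main step is to verify $\ell(\hatF(\bfx,\bfy))=\ell(\bfx)+\ell(\bfy)$: differentiating both sides in $\bfy$ gives $J(\hatF(\bfx,\bfy))^{-1}\,d_\bfy\hatF=d\bfy$ on the left by construction of $\omega$ together with the invariance relation above, and the same on the right, so the two sides differ only by a function of $\bfx$; setting $\bfy=\mathbf{0}$ shows this function vanishes. Finally, to check that the inverse $\ell^{-1}$ makes sense and that $\ell^{-1}(\ell(\bfx)+\ell(\bfy))$ actually has coefficients in $R$ (even though $\ell$ itself may have denominators), I will simply observe that this expression equals $\hatF(\bfx,\bfy)$ by the functional equation just proved, and $\hatF$ was given over $R$ to begin with. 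The hardest part of the argument is keeping track of the denominators during formal integration and confirming the invariance of $\omega$ cleanly from associativity; once that is done, existence, uniqueness, and the strong isomorphism statement all follow immediately.
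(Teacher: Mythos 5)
The paper does not actually prove this theorem; it is cited verbatim as Theorem~1 of Honda's 1970 paper, whose proof is a direct inductive (degree-by-degree, symmetric 2-cocycle) construction of $\ell$. Your proof via the invariant differential is therefore a genuinely different route, and the uniqueness half is correct. The existence half, however, has a real gap.

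You define $\ell$ ``as the formal antiderivative of $\omega$ from $\mathbf{0}$, which exists in $(R\otimes\mathbb{Q})[[\bfx]]^n_0$.'' In one variable a formal antiderivative always exists, but for $n>1$ a vector of 1-forms $\omega=J(\bfx)^{-1}d\bfx$ admits a primitive $\ell$ with $D\ell=J^{-1}$ \emph{only if} $\omega$ is closed, i.e.\ $\partial_k(J^{-1})_{ij}=\partial_j(J^{-1})_{ik}$ for all $i,j,k$. You never verify this, and it is not a routine bookkeeping point: it is precisely where the commutativity axiom $\hatF(\bfx,\bfy)=\hatF(\bfy,\bfx)$ enters. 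Indeed, your argument as written never uses commutativity at all, yet the theorem is false for noncommutative formal group laws (a noncommutative group law has no homomorphism onto $\widehat{\mathbb{G}}_a^n$). To close the gap one needs both invariance identities
\[
J(\hatF(\bfx,\bfy))=\partial_2\hatF(\bfx,\bfy)\,J(\bfy)
\quad\text{and}\quad
J(\hatF(\bfx,\bfy))=\partial_1\hatF(\bfx,\bfy)\,J(\bfx),
\]
the first from differentiating associativity in the third variable at $\mathbf 0$, the second additionally using commutativity. Together they give $\hatF^*\omega=\omega(\bfx)+\omega(\bfy)$; comparing the mixed $dX_a\wedge dY_b$ components of $d(\hatF^*\omega)=\hatF^*(d\omega)$ at $\bfy=\mathbf 0$ forces $d\omega=0$ by invertibility of $J$, and only then does the formal Poincar\'e lemma (valid because $R$ is a $\mathbb{Q}$-algebra) produce $\ell$. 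Once $\ell$ exists, your functional-equation verification is fine. Two smaller points: the relation you quote, $J(\hatF(\bfx,\bfy))=J_1(\bfx,\bfy)J(\bfy)$ with $J_1=\partial_1\hatF$, mislabels the Jacobian (it should be $\partial_2\hatF$ paired with $J(\bfy)$, or $\partial_1\hatF$ paired with $J(\bfx)$); and the opening caveat that $\ell$ should be read in $(R\otimes\mathbb{Q})[[\bfx]]^n_0$ is at odds with the statement and with the ``in particular'' clause, which asserts an isomorphism \emph{over $R$} --- Honda's Theorem~1 is stated for $R$ a $\mathbb{Q}$-algebra, so $R\otimes\mathbb{Q}=R$ and the caveat is unnecessary (and for non-$\mathbb{Q}$-algebras the claim as stated would simply be false).
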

\begin{proof}
    This is \cite[Theorem 1]{Hon70}.
\end{proof}

Such an element $\ell$ is called the \textbf{logarithmic function} of the formal group $\hatF$. 

\vspace{1.3mm}

Let $p$ be a fixed prime number. Let $M_{n\times n}\left(\bbZ_p\right)[[T ]]$ be the ring of one-variable formal power series with coefficients in the ring of $n\times n$ matrices with entries in $\bbZ_p$. 

\begin{definition}\label{definitionofu*l}
    For $u=\sum_{i\geq 0}B_i T^i \in M_{n\times n}\left(\bbZ_p\right)[[T ]]$ and $\ell(\bfx) \in \Qp[[\bfx]]^n_0$, we define
\begin{equation*}
    u* \ell(\bfx):=\sum_{i\geq 0} B_i \ell(\bfx^{p^i}),
\end{equation*}
where $\bfx^{p^i}$ denotes $\left(X^{p^i}_1,\dots, X_n^{p^i} \right)^t$.
\end{definition}

\begin{definition}
   For an element $u=\sum_{i\geq 0}B_i T^i \in M_{n\times n}(\bbZ_p)[[T]]$ such that $B_0 = pI_n$, an element $\ell(\bfx) \in \Qp[[\bfx]]^n_0$ is said to be of \textbf{Honda type} $u$ if 
\begin{itemize}
    \item $\ell(\bfx) \equiv \bfx \mod \deg 2$;
    \item  $u* \ell (\bfx)\equiv 0 \mod p$.
\end{itemize}

An element $u$ as above is called a \text{special} element. For a special element $u$, a formal group $\hatF$ over $\bbQ_p$ is said to be of \textbf{Honda type} $u$ if its logarithmic function is of type $u$.
\end{definition}

\begin{theorem}[Honda]\label{hondaisomorphismtheorem}

The following statement holds.
      \begin{enumerate}[label=(\roman*)]
      \item Let $\ell(\bfx) \in \Qp[[\bfx]]^n_0$ be of Honda type $u \in M_{n\times n}(\bbZ_p)[[T]]$. Then $  \hatF(\bfx,\bfy)=\ell^{-1}\left(\ell(\bfx)+\ell(\bfy) \right)$ is a formal group over $\bbZ_p$.
          \item Let $u_1$ and $u_2$ be special elements. Assume that $\hatF$ and $\hatG$ are formal groups of Honda type $u_1$ and $u_2$, respectively. Then there exists an element $v$ in $I_n+TM_{n\times n}(\bbZ_p)[[T]]$ satisfying $u_1=v u_2$ if and only if $\hatF$ and $\hatG$ are strongly isomorphic to  over $\bbZ_p$.
          \end{enumerate}
\end{theorem}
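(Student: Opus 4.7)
The backbone of the proof is an auxiliary \emph{functional equation lemma} (FEL): if $\ell(\bfx)\in\Qp[[\bfx]]^n_0$ satisfies $\ell\equiv \bfx\bmod\deg 2$ and $u*\ell\in p\Zp[[\bfx]]^n$ for some special $u$, then for every $h(\bfy)\in\Qp[[\bfy]]^m_0$ with $u*h\in p\Zp[[\bfy]]^m$ one has $\ell^{-1}(h)\in\Zp[[\bfy]]^m$. I would prove FEL by induction on the total $\bfy$-degree, using the recursive relation furnished by $u*\ell\in p\Zp[[\bfx]]^n$ to clear the $p$-adic denominators appearing in $\ell^{-1}(h)$ coefficient by coefficient. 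The underlying combinatorial input is the associativity identity
\[(v u)*\ell = v*(u*\ell)\qquad \text{for every } v\in M_{n\times n}(\Zp)[[T]],\]
which follows by matching the Cauchy product $vu=\sum_k\bigl(\sum_{i+j=k}C_jB_i\bigr)T^k$ against the nested Frobenius substitution $(\bfx^{p^j})^{p^i}=\bfx^{p^{i+j}}$.

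Part (i) is then immediate. Applying FEL to $h(\bfx,\bfy)=\ell(\bfx)+\ell(\bfy)\in\Qp[[\bfx,\bfy]]^n_0$, which satisfies
\[u*h=(u*\ell)(\bfx)+(u*\ell)(\bfy)\in p\Zp[[\bfx,\bfy]]^n,\]
we deduce that $\hatF(\bfx,\bfy)=\ell^{-1}(\ell(\bfx)+\ell(\bfy))$ lies in $\Zp[[\bfx,\bfy]]^n$. The axioms of a formal group --- identity, commutativity, associativity --- transfer from the corresponding properties of $\bfx+\bfy$ via the logarithm $\ell$, which simultaneously realizes the strong isomorphism $\hatF\approx\bfx+\bfy$ over $\Zp$.

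For the \emph{if} direction of (ii), suppose $u_1=vu_2$ with $v\in I_n+TM_{n\times n}(\Zp)[[T]]$. The associativity identity gives
\[u_1*\ell_G=(vu_2)*\ell_G=v*(u_2*\ell_G)\in p\Zp[[\bfx]]^n,\]
so $\ell_G$ is of type $u_1$ as well. Applying FEL with $\ell=\ell_G$ and $h=\ell_F$ (which satisfies $u_1*\ell_F\in p\Zp[[\bfx]]^n$ by hypothesis) produces $\varphi:=\ell_G^{-1}\circ\ell_F\in\Zp[[\bfx]]^n_0$ with $\varphi\equiv\bfx\bmod\deg 2$. The chain of equalities
\[\varphi(\hatF(\bfx,\bfy))=\ell_G^{-1}(\ell_F(\bfx)+\ell_F(\bfy))=\ell_G^{-1}(\ell_G(\varphi(\bfx))+\ell_G(\varphi(\bfy)))=\hatG(\varphi(\bfx),\varphi(\bfy))\]
then exhibits $\varphi$ as the desired strong isomorphism.

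The \emph{only if} direction is the technical heart. Given a strong isomorphism $\varphi:\hatF\to\hatG$ over $\Zp$, uniqueness of the logarithm applied to $\ell_G\circ\varphi$ (which is visibly a logarithm of $\hatF$, since $\ell_G\circ\varphi\circ\hatF=\ell_G\circ\hatG\circ\varphi=\ell_G(\varphi(\bfx))+\ell_G(\varphi(\bfy))$) forces $\ell_F=\ell_G\circ\varphi$. I would then construct $v=I_n+\sum_{i\geq 1}V_iT^i$ coefficient by coefficient, demanding $vu_2=u_1$ modulo $T^{N+1}$ at each inductive step. Comparing the coefficient of $T^N$ pins down
\[V_N = p^{-1}\Bigl(B_N^{(1)}-B_N^{(2)}-\sum_{j=1}^{N-1}V_jB_{N-j}^{(2)}\Bigr),\]
where $u_s=pI_n+\sum_{i\geq 1}B_i^{(s)}T^i$ for $s\in\{1,2\}$, and the crux is to show the expression in parentheses is divisible by $p$. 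This is established by forming the partial sum $v_N=I_n+\sum_{i=1}^{N-1}V_iT^i$, computing $(v_Nu_2-u_1)*\ell_F$, and extracting the component of lowest $\bfx$-degree $p^N$, which equals --- up to sign --- the numerator defining $V_N$ applied to $\bfx^{p^N}$. Its integrality follows from $u_1*\ell_F\in p\Zp[[\bfx]]^n$, the integrality of $\varphi$, the relation $u_2*\ell_G\in p\Zp[[\bfx]]^n$, and the congruence $\varphi(\bfx^{p^i})\equiv\varphi(\bfx)^{p^i}\bmod p\Zp[[\bfx]]^n$, which is a consequence of the $p$-th power map being a ring homomorphism modulo $p$. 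The main obstacle will lie in this step: one must carefully balance the $p$-adic denominators of $\ell_G$ (bounded but unbounded in the worst case, controlled through the type $u_2$) against the $p$-power gains from the Frobenius comparison when Taylor-expanding $\ell_G(\varphi(\bfx)^{p^i}+p\delta_i(\bfx))$, and verify that after all cancellations each $V_N$ indeed lands in $M_{n\times n}(\Zp)$.
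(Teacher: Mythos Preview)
The paper does not give a proof of this theorem at all; it simply cites Honda's original paper \cite[Theorems 2--4]{Hon70}. Your sketch is a faithful outline of Honda's own argument, organized around his functional equation lemma and the recursive construction of $v$ from the Frobenius congruence $\varphi(\bfx^{p^i})\equiv\varphi(\bfx)^{p^i}\bmod p$. So in substance you are reproducing exactly the proof being cited.

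Two corrections are worth making. First, in your statement of FEL the vector $h$ must land in $\Qp[[\bfy]]^n_0$ (output dimension $n$, matching $\ell$ and $u$), not $\Qp[[\bfy]]^m_0$; the subscript $m$ should refer only to the number of source variables $\bfy$. Second, and more substantively, the remark at the end of your part~(i) that $\ell$ ``simultaneously realizes the strong isomorphism $\hatF\approx\bfx+\bfy$ over $\Zp$'' is false: the logarithm $\ell$ lies in $\Qp[[\bfx]]^n_0$, not $\Zp[[\bfx]]^n_0$, and in general $\hatF$ is \emph{not} isomorphic to the additive group over $\Zp$ (indeed, over $\bbF_p$ the additive group has height $\infty$ while Lubin--Tate groups have finite height). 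This sentence should simply be deleted; it plays no role in the rest of your argument.
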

\begin{proof}
    This is \cite[Theorems 2--4]{Hon70}.
\end{proof}
\begin{theorem}\label{hondatypeofLseries}
Let $\{C_q,C_{q^2}:q\text{ prime}\}$ be a set of commuting matrices in $M_{n\times n}(\bbZ)$. Consider the formal Dirichlet series
\begin{equation*}
    \sum_{n\geq 1}\dfrac{C_n}{n^s}:=\prod_q\left( I_n - C_qp^{-s}+C_{q^2}p^{1-2s}\right)^{-1},
\end{equation*}
where $C_n\in M_{n\times n}(\bbZ)$. Put
\begin{equation*}
    \ell (\bfx):= \sum_{n\geq 1}\dfrac{C_n}{n}\bfx^n\in \bbQ[[\bfx]]^n_0.
\end{equation*}

    The formal power series
    \begin{equation*}
        \hatF(\bfx,\bfy):=\ell ^{-1}\left(\ell (\bfx)+ \ell(\bfy)\right)
    \end{equation*}
    is an $n$-dimensional formal group over $\bbZ$. For each prime number $p$, it is of the Honda type $pI_n-C_pT+C_{p^2}T^2$.
\end{theorem}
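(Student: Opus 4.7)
The plan is to apply Theorem~\ref{hondaisomorphismtheorem}(i) by verifying that $\ell(\bfx)$ is of Honda type $u_p:=pI_n-C_pT+C_{p^2}T^2$ for every prime $p$; that is, I need to show
\[
u_p*\ell(\bfx)=p\ell(\bfx)-C_p\ell(\bfx^p)+C_{p^2}\ell(\bfx^{p^2})\equiv 0\mod p.
\]
Once this is done for every $p$, Honda's theorem gives that $\hatF(\bfx,\bfy)=\ell^{-1}(\ell(\bfx)+\ell(\bfy))$ has $\Zp$-coefficients for all $p$. Since $\ell$ has rational coefficients, so does $\hatF$, and $\bbQ\cap\bigcap_p\Zp=\bbZ$; thus $\hatF$ is a formal group over $\bbZ$, and by construction its Honda type at $p$ is $u_p$.

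The core computation is a coefficient chase. From the Euler product, the matrices $C_n$ satisfy multiplicativity, $C_{mn}=C_mC_n$ whenever $\gcd(m,n)=1$, and the local recurrence
\[
C_{p^k}-C_pC_{p^{k-1}}+pC_{p^2}C_{p^{k-2}}=0,\qquad k\ge 2,
\]
with $C_{p^0}=I_n$ and $C_{p^1}=C_p$ (here the $C_p$ and $C_{p^2}$ appearing are those given in the Euler factor, and I rely on the commutativity assumption to freely rearrange products). Writing an arbitrary index as $m=p^km'$ with $\gcd(m',p)=1$, I will compute the coefficient of $\bfx^m$ in $u_p*\ell$ by combining the contributions from the three summands above. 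After factoring out $\tfrac{C_{m'}}{p^{k-1}m'}$, the bracket becomes $C_{p^k}-C_pC_{p^{k-1}}+pC_{p^2}C_{p^{k-2}}$, which vanishes for $k\ge 2$ by the recurrence. For $k=1$ the two non-trivial contributions cancel after dividing by $p$, and for $k=0$ the only surviving term is $\tfrac{pC_{m'}}{m'}$, which lies in $p\Zp$ because $m'$ is a $p$-adic unit.

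The main conceptual obstacle is a notational one rather than a deep difficulty: the symbol $C_{p^2}$ is used both for the given coefficient in the Euler factor and for the resulting Dirichlet coefficient of index $p^2$, and these are not literally equal (they differ by $C_p^2-pC_{p^2}^{\text{Euler}}$). I will therefore be careful at the outset to fix the interpretation (the $C_{p^2}$ appearing in $u_p$ is the Euler-factor matrix) and to derive the multiplicativity and the three-term recurrence directly from the product definition of $\sum C_n n^{-s}$ before carrying out the coefficient computation. With these two algebraic identities in hand, the verification of $u_p*\ell\equiv 0\bmod p$ reduces to the routine bookkeeping outlined above, and the conclusion follows from Honda's theorem.
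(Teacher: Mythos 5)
Your argument is correct, but it takes a genuinely different route from the paper. The paper's entire proof of this statement is a citation: ``This is \cite[Theorem 8]{Hon70}.'' You instead give a self-contained derivation by reducing to the earlier Honda isomorphism result (Theorem~\ref{hondaisomorphismtheorem}(i)): you verify directly that $u_p*\ell\equiv 0\bmod p$ for each prime $p$ by expanding the Euler product into the local recurrence $C_{p^k}-C_pC_{p^{k-1}}+pC_{p^2}C_{p^{k-2}}=0$ (for $k\ge 2$) together with multiplicativity $C_{mn}=C_mC_n$ for $\gcd(m,n)=1$, and then comparing coefficients of $\bfx^m$ with $m=p^km'$. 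The case split ($k=0$ gives a coefficient visibly in $p\Zp$ since $m'$ is a $p$-adic unit; $k=1$ cancels by multiplicativity; $k\ge 2$ vanishes identically by the recurrence) is exactly the right bookkeeping, and the final step $\bbQ\cap\bigcap_p\Zp=\bbZ$ correctly upgrades $\Zp$-integrality at every prime to $\bbZ$-integrality. Your observation that the $C_{p^2}$ in the Honda type $u_p$ is the Euler-factor matrix rather than the Dirichlet coefficient of index $p^2$ is a genuine and necessary clarification (they differ by $C_p^2-pC_{p^2}$), and without it the recurrence you use would not close. The only minor omission is that you do not explicitly note $C_1=I_n$, which is what gives $\ell(\bfx)\equiv\bfx\bmod\deg 2$ and hence the other half of the Honda-type condition; this is immediate from the Euler product, but it is part of what needs to be checked. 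What your approach buys is a proof that is self-contained within the material the paper already imports from Honda (Theorems~2--4, restated as Theorem~\ref{hondaisomorphismtheorem}), at the cost of the coefficient chase; what the paper's citation buys is brevity. For what it is worth, your argument is essentially Honda's own proof of his Theorem 8, so the two are not in tension.
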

\begin{proof}
    This is \cite[Theorem 8]{Hon70}.
\end{proof}

\bibliographystyle{alpha} 
\bibliography{reference}

\end{document}